\documentclass[11pt, letterpaper]{article}    	
 \usepackage[margin=.75in]{geometry}
\usepackage{setspace}
\usepackage{graphicx}	
\usepackage{subcaption}
\usepackage{float}
\usepackage[title,titletoc,toc]{appendix}

\usepackage[draft]{todonotes}   

\usepackage{paralist}		
\usepackage{enumitem, hyperref}
\makeatletter
\def\namedlabel#1#2{\begingroup
    #2%
    \def\@currentlabel{#2}%
    \phantomsection\label{#1}\endgroup
}

								
\usepackage{amssymb}
\usepackage{amsmath, amsthm, amssymb}

\newtheorem{thm}{Theorem}[section]

\newtheorem{prop}[thm]{Proposition}
\newtheorem{claim}{Claim}
\newtheorem{lemma}[thm]{Lemma}
\newtheorem{cor}[thm]{Corollary}

\theoremstyle{definition}

\newtheorem{defi}[thm]{Definition}

\newcommand{\E}{\mathbf{E}}
\newcommand{\I}{\mathbf{I}}
\newcommand{\X}{\mathbf{X}}
\newcommand{\Y}{\mathbf{Y}}
\newcommand{\rva}{\mathbf{a}}
\newcommand{\rvb}{\mathbf{b}}

\newcommand{\Prb}{\mathbf{Pr}}

\newcommand{\Pa}{P}
\newcommand{\nPa}{\hat{P}}
\newcommand{\pa}{p}

\newcommand{\Ac}{A}
\newcommand{\ac}{\gamma}
\newcommand{\Ke}{K}
\newcommand{\ke}{\eta}
\newcommand{\Lo}{L}
\newcommand{\nlo}{q}

\newcommand{\Cco}{\mathrm{C}}
\newcommand{\cde}{\varphi_1}
\newcommand{\cac}{\varphi_2}
\newcommand{\rd}{i}

\newcommand{\er}{\varepsilon}

\newcommand{\Hc}{\mathcal{H}}
\newcommand{\Tc}{\mathcal{T}}
\newcommand{\Fc}{\mathcal{F}}
\newcommand{\Ic}{\mathcal{I}}
\newcommand{\A}{\mathcal{A}}
\newcommand{\B}{\mathcal{B}}
\newcommand{\C}{\mathcal{C}}
\newcommand{\D}{\mathcal{D}}
\newcommand{\bO}{\mathcal{O}}

\usepackage{tikz}
\newcounter{casenum}

\newcommand*{\rom}[1]{\expandafter{\romannumeral #1\relax}}

\usepackage{authblk}
\makeatletter
\def\@cite#1#2{{\normalfont[{\bfseries#1\if@tempswa , #2\fi}]}}
\makeatother

\title{The chromatic number of triangle-free hypergraphs}

\author{Lina Li\thanks{Combinatorics and Optimization Department,
University of Waterloo, Waterloo, Ontario N2L 3G1, Canada \texttt{lina.li@uwaterloo.ca}.}
\quad Luke Postle\thanks{Combinatorics and Optimization Department,
University of Waterloo, Waterloo, Ontario N2L 3G1, Canada {\tt lpostle@uwaterloo.ca}. Partially supported by NSERC
under Discovery Grant No. 2019-04304 and the Canada Research Chair program.}}

\begin{document}

\maketitle
\begin{abstract}
A triangle in a hypergraph $\Hc$ is a set of three distinct edges $e, f, g\in\Hc$ and three distinct vertices $u, v, w\in V(\Hc)$ such that $\{u, v\}\subseteq e$, $\{v, w\}\subseteq f$, $\{w, u\}\subseteq g$ and $\{u, v, w\}\cap e\cap f\cap g=\emptyset$.
Johansson~\cite{johansson1996asymptotic} proved in 1996 that $\chi(G)=\bO(\Delta/\log\Delta)$ for any triangle-free graph $G$ with maximum degree $\Delta$. Cooper and Mubayi~\cite{cooper2015list} later generalized the Johansson's theorem to all rank $3$ hypergraphs.
In this paper we provide a common generalization of both these results for all hypergraphs, showing that if $\Hc$ is a rank $k$, triangle-free hypergraph, then the list chromatic number
\[
\chi_{\ell}(\Hc)\leq \bO\left(\max_{2\leq \ell \leq k} \left\{\left(  \frac{\Delta_{\ell}}{\log \Delta_{\ell}} \right)^{\frac{1}{\ell-1}} \right\}\right),
\]
where $\Delta_{\ell}$ is the maximum $\ell$-degree of $\Hc$.
The result is sharp apart from the constant.
Moreover, our result implies, generalizes and improves several earlier results on the chromatic number and also independence number of hypergraphs, while its proof is based on a different approach than prior works in hypergraphs (and therefore provides alternative proofs to them).
In particular, as an application, we establish a bound on chromatic number of sparse hypergraphs in which each vertex is contained in few triangles, and thus extend results of Alon, Krivelevich and Sudakov~\cite{alon1999coloring} and Cooper and Mubayi~\cite{cooper2016coloring} from hypergraphs of rank 2 and 3, respectively, to all hypergraphs.
\end{abstract}

\section{Introduction}\label{sec:intro}
A hypergraph is a pair $(V,E)$ where $V$ is a set whose elements are called \textit{vertices}, and $E$ is a family of subsets of $V$ called \textit{edges}.
A hypergraph has \textit{rank $k$} if every edge contains between $2$ and $k$ vertices, and is \textit{$k$-uniform} if every edge contains exactly $k$ vertices. 
A \textit{proper coloring} of $\Hc$ is an assignment of colors to the vertices so that no edge is monochromatic. 
The smallest number of colors that are required for a proper coloring of $\Hc$, is called the \textit{chromatic number} of $\Hc$ and denoted by $\chi(\Hc)$.
Given a set $L(v)$ of colors for every vertex $v\in V(\Hc)$, a \textit{proper list coloring} of $\Hc$ is a proper coloring, where every vertex $v$ receives a color from $L(v)$. 
The \textit{list chromatic number} of $\Hc$, denoted by $\chi_{\ell}(\Hc)$, is the minimum number $c$ so that if $|L(v)| \geq c$ for all $v$, then $\Hc$ has a proper list coloring.
It is not hard to see that $\chi(\Hc) \leq \chi_{\ell}(\Hc)$.

The study of the chromatic number of graphs (i.e., 2-uniform hypergraphs) has a rich history.
A straightforward greedy coloring algorithm shows that any graph $G$ with maximum degree $\Delta$ has chromatic number $\chi(G) \leq \Delta +1$, while the celebrated Brooks' theorem~\cite{brooks1941colouring} states that for connected graphs equality holds only for cliques and odd cycles.
Moving beyond Brooks' theorem, a natural question to consider is: what structural constraints one can be put on a graph to decrease its chromatic number?
In particular, Vizing~\cite{vizing1968some} proposed a question in 1968 which asked for the best possible upper bound for the chromatic number of a triangle-free graph.
Improving on results of Catlin~\cite{catlin1978bound}, Lawrence~\cite{lawrence1978covering}, Borodin and Kostochka ~\cite{borodin1977upper}, Kostochka~\cite{kostochkaletter}, and Kim~\cite{kim1995brooks}, in 1996 Johansson~\cite{johansson1996asymptotic} showed that 
\begin{equation}\label{johansson}
\chi(G)=\bO(\Delta/\log\Delta)
\end{equation}
for any triangle-free graph $G$ with maximum degree $\Delta$, and this bound is known to be tight up to a constant factor by constructions of Kostochka and Masurova~\cite{kostochka1977inequality}, and Bollob{\'a}s~\cite{bollobas1978chromatic}.
Indeed, Johansson~\cite{johansson1996asymptotic} proved a stronger result by showing that the list chromatic number $\chi_{\ell}(G)\leq (9+o(1))\Delta/\log\Delta.$
Pettie and Su~\cite{pettie2015distributed} subsequently improved the above constant from $9$ to $4$.
Later, Molloy~\cite{molloy2019list} further reduced the constant to $1$ while Bernshteyn~\cite{bernshteyn2019johansson} then provided a shorter proof of Molloy's result.

Analogous problems have also been investigated for hypergraphs by many researchers over the years. 
For a rank $k$ hypergraph $\Hc$ and an positive integer $i \leq k$, the \textit{$i$-degree} of a
vertex $v$ is the number of size $i$ edges containing $v$.
Using the Lov{\'a}sz Local Lemma, one can easily show that $\chi(\Hc)=\bO\left(\Delta^{1/(k-1)}\right)$ for any $k$-uniform hypergraph $\Hc$ with maximum $k$-degree $\Delta$, see Erd\H{o}s and Lov{\'a}sz~\cite{erdos1975problems}.
Similarly as for the graph case, one may ask what local constraints can be imposed on a hypergraph in order to significantly improve its chromatic number beyond this easy bound. We say a hypergraph is \textit{linear} if any two of its edges intersect in at most one vertex, and a \textit{loose triangle} in a linear hypergraph is a set of three pairwise intersecting edges containing no common point.
Frieze and Mubayi~\cite{frieze2008chromatic} first generalized Johansson's theorem (that is,~\eqref{johansson}) to all $3$-uniform linear hypergraphs as follows.
\begin{thm}[Frieze and Mubayi~\cite{frieze2008chromatic}]\label{thm:FM1}
If $\Hc$ is a  linear $3$-uniform hypergraph which does not contain any loose triangles, then
\[
\chi(\Hc)=\bO\left((\Delta/\log\Delta)^{1/2}\right),
\]
where $\Delta$ is the maximum $3$-degree of $\Hc$.
\end{thm}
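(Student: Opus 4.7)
The plan is to adapt Johansson's semi-random coloring argument to the $3$-uniform hypergraph setting. Set $q = K(\Delta/\log\Delta)^{1/2}$ for a sufficiently large constant $K$, and attempt to build a proper $q$-coloring. The exponent $1/2$ is forced by the heuristic: in a $3$-uniform edge $\{v,u,w\}$, a uniformly random assignment from lists of size $q$ makes the edge monochromatic with probability $\sim 1/q^2$, so the natural ``no-conflict'' threshold is $\Delta/q^2 = \bO(1)$, giving $q \sim \sqrt{\Delta}$. The extra $1/\log\Delta$ factor would come from iterating a wasteful coloring procedure over $T = \Theta(\log\log\Delta)$ rounds, mirroring the graph case.

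In each round $t$, I would maintain a list $L_t(v)$ of allowed colors for every still-uncolored vertex $v$. Each such vertex picks a tentative color $\phi_t(v)$ uniformly at random from $L_t(v)$; a color $c \in L_t(v)$ is removed from $L_{t+1}(v)$ whenever some edge $\{v,u,w\} \ni v$ has $\phi_t(u) = \phi_t(w) = c$, since assigning $c$ to $v$ would then close a monochromatic edge. The vertex $v$ is permanently colored $\phi_t(v)$ if that tentative color was not rejected by any edge through $v$. Two invariants are carried through the rounds: the residual list sizes $|L_t(v)|$ stay close to a target $\ell_t$, and the residual $3$-degrees $d_t(v)$ stay close to a target $d_t$, with the sparsity ratio $d_t/\ell_t^2$ decreasing geometrically in $t$. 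These invariants are enforced by Talagrand's inequality to control deviations in each round, and by the Lov\'asz Local Lemma to guarantee simultaneous avoidance of all ``large deviation'' bad events. After $T$ rounds, $d_T/\ell_T^2$ is small enough that one further application of the Local Lemma (or a direct greedy step) finishes the coloring.

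The two structural hypotheses are used precisely to make the concentration and Local Lemma applications go through. Linearity (any two edges share at most one vertex) ensures that the events ``edge $e$ becomes monochromatic'' for two distinct edges $e, e'$ through $v$ depend on essentially disjoint random coordinates, so the bad events at $v$ decorrelate well. The no-loose-triangle hypothesis eliminates a particularly dangerous configuration: a pair of edges $\{v,u_1,u_2\}, \{v,u_3,u_4\}$ through $v$ together with a third edge $\{u_i,u_j,x\}$ with $i\in\{1,2\}$, $j\in\{3,4\}$ closing a triangle. Forbidding such configurations makes the local dependency structure of bad events sufficiently tree-like that each random color choice has bounded influence on the counts tracked by the invariants.

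The main obstacle I anticipate is the concentration of $|L_{t+1}(v)|$ around its mean $\ell_t\bigl(1 - \bO(d_t/\ell_t^2)\bigr)$. The events ``color $c$ survives at $v$'' share random coordinates across distinct $c$ (through the shared neighbors of $v$), so the list size is a sum of strongly dependent Bernoullis. Proving sharp concentration via Talagrand requires that each vertex's random choice influence only a bounded number of these survival events, which is exactly what linearity plus no-loose-triangle guarantees; without them, a single random color at some $u_i$ could simultaneously threaten many survival events through multiple overlapping edges at $v$. Carrying these invariants through all $T$ rounds while absorbing the per-round $o(1)$ losses into the constant $K$ is the technical heart of the argument.
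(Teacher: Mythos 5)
This theorem is not proved in the paper; it is a citation to Frieze and Mubayi, and within the paper's framework it falls out as an immediate corollary of Theorem~\ref{mainthm1}. The observation that makes this work: for a linear $3$-uniform hypergraph, the only triangle in the sense of Definition~\ref{def:tri} is the loose triangle, since both $F_5$ and $K_4^-$ contain a pair of edges sharing two vertices and are therefore non-linear. Hence ``linear and loose-triangle-free'' is equivalent to ``linear and triangle-free,'' and Theorem~\ref{mainthm1}, applied with $\Delta_2 = 0$ and $\Delta_3 = \Delta$, gives $\chi_\ell(\Hc) = \bO\bigl((\Delta/\log\Delta)^{1/2}\bigr)$. (The paper even notes after Theorem~\ref{sparsethm} that the linear result holds without triangle-freeness.) Your sketch, by contrast, sets out to reprove the result from scratch by adapting the Johansson/Kim semi-random framework, which is roughly what Frieze and Mubayi originally did, so the general shape of the plan is reasonable and the heuristic $\Delta/q^2 = \bO(1)$ yielding the exponent $1/2$ is correct.

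There are, however, two genuine gaps that would block the analysis as written. First, the invariant $d_t(v)$ you propose to carry is the \emph{residual} $3$-degree, but the quantity that must actually be tracked and concentrated is a \emph{per-color} degree $d_{t}(v,c)$ --- the number of edges through $v$ whose other vertices are still uncolored and still have $c$ in their palettes. This is not a cosmetic difference: the entire technical difficulty in Johansson-type arguments (the reason Johansson needed entropy control, Pettie and Su needed averaging over colors, and this paper needs its weighted $c$-degree $d_{\rd}(u,c)$) is that the per-color degree is the quantity on which the final Local Lemma application depends and it is awkward to concentrate. The residual degree alone says nothing about whether $v$ still has enough low-conflict colors left. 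Second, your rejection rule only discards a color $c$ from $L_{t+1}(v)$ when the two remaining vertices of an edge both \emph{tentatively} receive $c$ in the same round; once a vertex of an edge $\{v,u,w\}$ is \emph{permanently} colored $c$, the edge turns into the pair constraint ``not both $v,w$ get $c$,'' which your sketch never records. Over many rounds these induced lower-uniformity constraints accumulate and must be tracked explicitly (this is exactly what the hypergraphs $\Hc^{\rd}_{\ell,c}$ in the paper do, and even in the $3$-uniform setting one needs a $2$-uniform layer). Without both of these ingredients the invariants you state do not close, and the per-round deviations cannot be absorbed.
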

It was subsequently realized by the same group in~\cite{frieze2013coloring} that for linear hypergraphs, the triangle-free condition in Theorem~\ref{thm:FM1} can be removed while the same conclusion still holds. 
Meanwhile, they also showed that such a linear hypergraph result can be generalized to any uniformity, by proving that if $\Hc$ is a $k$-uniform linear hypergraph with maximum $k$-degree $\Delta$, then 
\begin{equation}\label{linearthm}
\chi(\Hc)=\bO\left((\Delta/\log\Delta)^{1/(k-1)}\right).
\end{equation}
On the other hand, Cooper and Mubayi~\cite{cooper2015list} removed the restriction to linear systems from~Theorem~\ref{thm:FM1}, and then generalized Johansson's theorem from graphs to all rank $3$ hypergraphs. 
In order to formally state their result, we first introduce a definition of `triangle' for general hypergraphs, which was used in~\cite{cooper2015list}.
\begin{defi}[Triangle]\label{def:tri}
A \textit{triangle} in a hypergraph $\Hc$ is a set of three distinct edges $e, f, g\in\Hc$ and three distinct vertices $u, v, w\in V(H)$ such that $\{u, v\}\subseteq e$, $\{v, w\}\subseteq f$, $\{w, u\}\subseteq g$ and $\{u, v, w\}\cap e\cap f\cap g=\emptyset$.
\end{defi}
Note that similarly to a more classical definition of the hypergraph triangle, the \textit{Berge triangle}\footnote{A \textit{Berge triangle} in a hypergraph $\Hc$ is a set of three distinct edges $e, f, g\in\Hc$ such that there
exists three distinct vertices $u, v, w\in V(H)$ with $\{u, v\}\subseteq e$, $\{v, w\}\subseteq f$, $\{w, u\}\subseteq g$.}, the notion of triangle in Definition~\ref{def:tri} refers to a family of hypergraphs. 
However, Definition~\ref{def:tri} is weaker than the definition of Berge triangles, in the sense that the triangle family it refers to is a subfamily of Berge triangles.
For example, there are three different triangles in a $3$-uniform hypergraph: the loose triangle $C_3=\{abc, cde, efa\}$, $F_5=\{abc, bcd, aed\}$, and $K^-_4=\{abc, bcd, abd\}$. On the other hand, $\{abc,bcd,ace\}$ is a Berge triangle but not a triangle. 

We say a hypergraph is \textit{triangle-free} if it does not contain any triangle as a subgraph.
As in Johansson's theorem~\cite{johansson1996asymptotic}, the main result of Cooper and Mubayi can be stated in terms of list chromatic number.
\begin{thm}[Cooper and Mubayi~\cite{cooper2015list}]\label{CooperMubayi}
Let $\Hc$ be a rank $3$, triangle-free hypergraph with maximum $3$-degree $\Delta_3$ and maximum $2$-degree $\Delta_2$. Then
\[
\chi_{\ell}(\Hc) \leq c\cdot \max\left\{
\left(\frac{\Delta_3}{\log\Delta_3}\right)^{1/2},\ \frac{\Delta_2}{\log\Delta_2}
\right\},
\]
where $c$ is a fixed constant, not depending on $\Hc$.
\end{thm}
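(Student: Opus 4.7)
The plan is to adapt the semi-random \emph{nibble} method underlying the modern proofs of Johansson's theorem (as in Molloy~\cite{molloy2019list} or Bernshteyn~\cite{bernshteyn2019johansson}) to rank-$3$ hypergraphs. After normalising so that every list has the common size $C := c \cdot \max\{(\Delta_3/\log\Delta_3)^{1/2},\ \Delta_2/\log\Delta_2\}$ for a sufficiently large constant $c$, I would run an iterative coloring algorithm. In each round $t$, every uncolored vertex $v$ picks a tentative color $\alpha \in L_t(v)$ uniformly at random; the tentative choice is ``activated'' on an independent random subset of vertices of density $p = \Theta(1/\log C)$; and any activated vertex is subsequently un-activated if it would create a monochromatic edge, i.e., a $2$-edge with an equally-colored endpoint, or a $3$-edge with two equally-colored endpoints. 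Colors killed by this process are then removed from the lists $L_{t+1}(u)$ of the remaining uncolored neighbors $u$.

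Two random quantities are tracked across rounds: the list size $\ell_t(v) := |L_t(v)|$ and, for each pair $(v,\alpha)$ with $\alpha \in L_t(v)$, an \emph{effective color-degree} $d_t(v,\alpha)$ recording the expected future kill rate of $\alpha$ at $v$, summed over both $2$-edges and $3$-edges containing $v$. One shows inductively that, with high probability, $\ell_t(v)$ shrinks by only a factor $1-\Theta(p)$ per round, while $d_t(v,\alpha)$ shrinks by an \emph{extra} factor of order $1 - \Theta(p)/\log C$ attributable to the entropy gain from triangle-freeness. Concretely, two distinct potential kills of $\alpha$ at $v$ are nearly independent because Definition~\ref{def:tri} precludes the short closed configurations that would otherwise correlate them; a Jensen-type inequality then converts this near-independence into a strict improvement over the trivial first-moment bound on $d_t$. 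After $\Theta(\log C)$ rounds the ratio $d_t(v,\alpha)/\ell_t(v)$ drops below a fixed constant, and a final direct application of the Lov\'asz Local Lemma produces a proper list coloring of the residual hypergraph.

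Concentration of $\ell_t(v)$ and $d_t(v,\alpha)$ around their conditional expectations is obtained via Talagrand's inequality, after verifying small Lipschitz constants and a suitable certification property; the resulting deviations are polylogarithmic in $C$. The Local Lemma is then invoked round by round with a dependency graph polynomial in $\Delta_2$ and $\Delta_3$ and bad-event probabilities that are quasi-polynomially small in $C$, which is more than enough to close the induction.

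The main obstacle I anticipate is balancing the two kill mechanisms in a single framework. A given $2$-edge kills a color at $v$ with probability $\Theta(1/C)$ per round, whereas a given $3$-edge kills only with probability $\Theta(1/C^2)$ because it requires a \emph{coincidence} of two tentative picks among its other two vertices. Balancing the total kill rate $\Delta_2/C + \Delta_3/C^2$ against the required $\Theta(1/\log C)$ entropy gain yields precisely the two terms in the $\max$ of the theorem. The delicate point is to exploit Definition~\ref{def:tri} uniformly across all three correlation types --- two $2$-edges, two $3$-edges, and a $2$-edge together with a $3$-edge at the same pair $(v,\alpha)$ --- while keeping the Talagrand Lipschitz constants small enough that both the concentration estimates and the subsequent Local Lemma application still go through.
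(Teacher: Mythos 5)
Your proposal attributes itself to Molloy and Bernshteyn, whose proofs of Johansson's bound are one-shot (entropy compression, resp.\ lopsided Local Lemma, no nibble), yet what you actually describe is a multi-round semi-random nibble in the Kim/Johansson/Pettie--Su mould; the paper explicitly notes the Molloy/Bernshteyn route does not appear to extend to hypergraphs, so the mismatch of citation and mechanism matters. The step that would genuinely fail, though, is your plan to concentrate the per-pair quantity $d_t(v,\alpha)$ by Talagrand. In a triangle-free (as opposed to girth-$5$) hypergraph, potential kills of $\alpha$ at many distinct edges on $v$ can all be caused by the \emph{same} random trial at a single second-neighbour of $v$, and triangle-freeness puts no bound on how many such edges one vertex can touch; so the Lipschitz/certification constant for $d_t(v,\alpha)$ blows up and Talagrand gives nothing. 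Evading exactly this is why Johansson introduced the entropy method and why Pettie--Su concentrate the \emph{average} $c$-degree over $\alpha\in L_t(v)$. Cooper--Mubayi's own proof of the rank-$3$ theorem uses Johansson's entropy route; the present paper obtains it as the $k=3$ case of Theorem~\ref{mainthm1} by the Pettie--Su route, concentrating the averaged quantity $\Lambda_i(u)$ and its smoothed surrogate $\mathcal{D}_i(u)$. Your near-independence observation from Definition~\ref{def:tri} is genuine, but it controls only expectations (this is what Lemma~\ref{lem:dep} does, via Janson's inequality); it does not rescue concentration of an individual $d_t(v,\alpha)$.

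There is a second, independent gap in the update rule. When a vertex $u$ in a $3$-edge $\{u,v,w\}$ is permanently coloured $\alpha$, the correct residual constraint is a \emph{new $2$-edge}: $v$ and $w$ may not both receive $\alpha$. Removing $\alpha$ from $L_{t+1}(v)$ and $L_{t+1}(w)$ outright over-constrains fatally: with palettes of size $C\asymp(\Delta_3/\log\Delta_3)^{1/2}$, a vertex lies in up to $\Delta_3\asymp C^2\log C$ triples, and at any useful activation rate the expected number of colours deleted from a palette in a single round is already of order $C$, so the palettes collapse before the nibble makes progress. The paper instead shrinks such hyperedges into the $2$-uniform restriction hypergraph $\hat{\mathcal{H}}^{i}_{2,c}$ and then couples the $2$- and $3$-uniform degrees through the single linear weighted $c$-degree $d_i(u,c)=\sum_{\ell}(\varphi_1 p_i)^{k-\ell}d^{i}_{\ell}(u,c)$, whose weights are chosen precisely so that a shrink does not increase the tracked quantity. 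This is the balancing issue your final paragraph correctly identifies as the crux, but the proposal offers no mechanism for it; without shrink-and-weight and without the switch to averaged concentration, the induction you sketch does not close.
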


\subsection{Our main result}
Given a rank $k$ hypergraph $\Hc$, an integer $2\leq \ell \leq k$, and a set $S$ of vertices (where $1 \leq |S| <\ell$), we define $\deg_{\ell}(S, \Hc)$ to be the number of size $\ell$ edges containing $S$.
In particular, when $S$ consists of a single vertex $v$, then $\deg_{\ell}(S, \Hc)$ is exactly the $i$-degree of $v$.
The \textit{maximum $\ell$-degree} of $\Hc$, denoted by $\Delta_{\ell}(\Hc)$, is the maximum of $\mathrm{deg}_{\ell}(v, \Hc)$ over all vertices $v$ in $\Hc$; the \textit{maximum $(s,\ell)$-codegree of $\Hc$}, denoted by $\delta_{s, \ell}(\Hc)$, is the maximum of $\deg_{\ell}(S, \Hc)$ over all $s$-vertex sets $S$ in $\Hc$.
When the underlying graph is clear from the context, we simply write $\Delta_{\ell}$ and $\delta_{s, \ell}$ instead.

We extend Cooper and Mubayi's theorem to all hypergraphs as follows.
\begin{thm}\label{mainthm1}
Let $k\geq 3$ be an integer,
and $\Hc$ be a rank $k$, triangle-free hypergraph. Then
\[
\chi_{\ell}(\Hc)\leq c\cdot \max_{2\leq \ell \leq k} \left\{\left(  \frac{\Delta_{\ell}}{\log \Delta_{\ell}} \right)^{\frac{1}{\ell-1}} \right\},
\]
where $c$ depends only on $k$, not on $\Hc$.
\end{thm}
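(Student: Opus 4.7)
The plan is to adapt Bernshteyn's Lovász Local Lemma argument for triangle-free graphs to the hypergraph setting, combining a uniformly random list-coloring with a correlation estimate that leverages Definition~\ref{def:tri}. Set $D$ equal to the right-hand side of the theorem, with $c = c(k)$ a constant to be determined, and suppose toward contradiction that $\Hc$ is a minimum triangle-free counterexample with list-assignment $L$ of sizes $|L(v)| = D$. Draw $\phi(v)$ uniformly from $L(v)$ independently for every $v$, and say that a color $\alpha \in L(v)$ is \emph{blocked} at $v$ if some edge $e \ni v$ satisfies $\phi(u) = \alpha$ for all $u \in e \setminus \{v\}$. Let $T_v$ be the event that every color in $L(v)$ is blocked. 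It suffices to show $\Prb[\bigcup_v T_v] < 1$, since a $\phi$ avoiding every $T_v$ yields an unblocked color at each vertex, producing a proper list coloring and contradicting the minimality of $\Hc$.

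The heart of the argument is the tail estimate $\Prb[T_v] \leq \Delta^{-C}$ with $C = C(c,k)$ arbitrarily large for $c$ large. A single $\ell$-edge through $v$ blocks a given color $\alpha$ with probability $D^{-(\ell-1)}$, and ignoring correlations across edges gives $\Prb[\alpha \text{ unblocked at } v] \approx \exp(-\sum_\ell \Delta_\ell/D^{\ell-1}) = \exp(-\sum_\ell (\log\Delta_\ell)/c^{\ell-1})$, which is at least $\Delta^{-O(1/c)}$; so at least $D \cdot \Delta^{-O(1/c)}$ colors remain unblocked in expectation. Upgrading this first-moment statement to the desired exponential concentration on $T_v$ will use a second-moment or entropic computation in which the triangle-free hypothesis reappears as a bound on the correlation between blocking events for distinct colors. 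Once $\Prb[T_v] \leq \Delta^{-C}$ is established, a standard Lovász Local Lemma finishes the argument: $T_v$ is measurable with respect to $\phi(u)$ for $u$ in a $\mathrm{poly}(\max_\ell \Delta_\ell)$-sized neighborhood of $v$ with radius depending only on $k$, so the dependency degree is a fixed polynomial in $\max_\ell \Delta_\ell$ and is easily absorbed.

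The principal obstacle is the correlation analysis. In the graph case, triangle-freeness makes $N(v)$ an independent set, so the colors assigned to $N(v)$ are genuinely independent and blocking events admit clean decomposition. In the rank-$k$ setting, two edges through $v$ can share outside vertices in intricate ways, and Definition~\ref{def:tri} must be invoked to exclude precisely the configurations that would cause distinct colors to be blocked by a small common set of outside vertices. The key combinatorial lemma we anticipate will be a structural statement about the multi-uniform ``link'' at each vertex, expressing a sparsity-type condition across all edge-sizes $\ell \in \{2,\ldots,k\}$ that forces the joint distribution of blocking events over $L(v)$ to be close enough to a product distribution for the desired exponential tail on $T_v$ to follow uniformly in $\ell$.
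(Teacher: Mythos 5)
Your proposal takes a fundamentally different route from the paper's. The paper proves Theorem~\ref{mainthm1} not by a one-shot Lov\'asz Local Lemma argument in the style of Molloy or Bernshteyn, but by a multi-round semi-random (nibble) algorithm in the spirit of Johansson and Pettie--Su: it iteratively colors a small fraction of vertices, tracks a carefully weighted \emph{average} of $(c,\ell)$-color-degrees, concentrates that average with a version of Talagrand's inequality (due to Delcourt and Postle) that tolerates exceptional outcomes, runs a codegree-reduction subroutine after each round, and finally derives Theorem~\ref{mainthm1} from the intermediate Theorem~\ref{mainthm2}. The authors remark explicitly that ``it does not seem that the Molloy/Bernshteyn approach will generalize to hypergraphs,'' which is precisely the approach you sketch.

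The gap in your proposal is the step you defer: ``upgrading this first-moment statement to the desired exponential concentration on $T_v$.'' That step is not routine in higher rank. In rank $2$, triangle-freeness makes $N(v)$ an independent set, so the random coloring of $N(v)$ is a genuinely free product sample, and every blocking event at $v$ is a function of a single, strongly concentrated statistic --- the number of distinct colors appearing on $N(v)$. Both Molloy's entropy argument and Bernshteyn's lopsided LLL live entirely inside this observation. For $k\geq 3$, triangle-freeness in the sense of Definition~\ref{def:tri} buys nothing comparable: two $\ell$-edges through $v$ may overlap in up to $\ell-2$ outside vertices, codegrees $\delta_{s,\ell}$ can be polynomially large in $\Delta$, and the link of $v$ may itself contain hyperedges, so the events $\{\alpha\text{ is blocked at }v\}$ over $\alpha\in L(v)$ are determined by the coloring of a hypergraph with substantial internal overlap, with no clean scalar statistic controlling them simultaneously. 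The ``key combinatorial lemma about the multi-uniform link'' you anticipate would have to extract, from triangle-freeness alone, a near-product structure strong enough to run an entropy or anticoncentration argument uniformly over all the codegree parameters; that lemma is exactly what is missing. The paper avoids needing it by replacing a one-shot bound on $\Prb[T_v]$ with iterative concentration of averaged color-degrees, with codegree reduction and a Janson-based ``almost-independence'' bound (Lemma~\ref{lem:dep}) playing the role your anticipated lemma would have to serve.
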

It is shown in~\cite[Theorem~5]{frieze2008chromatic} that there exists $k$-uniform triangle-free hypergraph with maximum $k$-degree $\Delta$ and chromatic number at least $c'\left(\Delta/\log \Delta\right)^{1/(k-1)}$ for some absolute constant $c'$, which only depends on $k$.
Therefore, Theorem~\ref{mainthm1} is sharp apart from the constant $c$.

In fact, we will derive Theorem~\ref{mainthm1} as a corollary of the following weaker theorem.
\begin{thm}\label{mainthm2}
Let $k\geq 3$ be an integer,
and $\Hc$ be a rank $k$, triangle-free hypergraph. If there exists $\Delta$ such that 
\begin{itemize}
\item[(i)] $\Delta_{\ell} \leq \Delta^{1 - \frac{k - \ell}{k-1}} (\log \Delta)^{\frac{k - \ell}{ k-1}}$ for $2\leq \ell \leq k$;
\item[(ii)] $\delta_{s, \ell} \leq (\Delta/\log\Delta)^{\frac{\ell - s}{k-1}}$ for $2\leq s<\ell \leq k$,
\end{itemize}
then
\[
\begin{split}
\chi_{\ell}(\Hc) &\leq c \cdot\left( \frac{\Delta}{\log \Delta}\right)^{\frac{1}{k-1}},
\end{split}
\]
where $c$ depends only on $k$, not on $\Hc$.
\end{thm}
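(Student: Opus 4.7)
The plan is to adapt the iterative ``wasteful coloring'' framework, going back to Molloy's proof for graphs and its extensions by Frieze--Mubayi and Cooper--Mubayi for low-rank hypergraphs, to the rank $k$ setting. Fix $q := C(\Delta/\log\Delta)^{1/(k-1)}$ for a sufficiently large $C = C(k)$, and consider a list assignment with $|L(v)| = q$ for every $v$. We build a proper list coloring through many rounds of partial random assignment: in each round $t$, for every still-uncolored $v$ we maintain a surviving list $L_t(v)$ together with ``keep probabilities'' $p_t(v,c)$ for $c \in L_t(v)$, sample for each vertex a tentative color according to these probabilities, and uncolor any vertex whose tentative color lies in a monochromatic edge.

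Condition~(i) is designed precisely so that, with $q$ as above, the quantities $\Delta_\ell / q^{\ell-1}$ all have the same order $1/\log\Delta$ across $2 \leq \ell \leq k$. This balancing lets us choose keep probabilities so that each $\ell$-edge through $v$ contributes comparably to $\E[|L_{t+1}(v)|]$. The triangle-free hypothesis then plays the familiar Molloy-style role of enforcing near-independence of color-removal events at different edges sharing $v$: combined with the codegree bound~(ii), Definition~\ref{def:tri} forbids the ``short-cycle'' configurations through which two such edges could strongly correlate, so the second-moment contribution from colliding pairs of edges is small, the per-round loss of $|L_t(v)|$ concentrates, and the invariants $|L_t(v)| / \Delta_{\ell,t}^{1/(\ell-1)}$ improve by a uniform multiplicative factor each round.

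The key steps, in order, are: (1) set up the random process, fixing keep probabilities so as to balance contributions across all $\ell$; (2) compute expected shrinkage of $|L_{t+1}(v)|$ and of the new $\ell$-degrees $\Delta_{\ell,t+1}(v)$, using triangle-freeness and~(ii) to justify the required near-independence; (3) establish concentration of these random variables via Talagrand's inequality, and apply the Lov\'asz Local Lemma so that the invariants are preserved \emph{simultaneously} at every vertex; (4) iterate until the effective $\ell$-degrees are small enough that a final direct application of the Local Lemma, with bad events ``edge $e$ is monochromatic'', completes the coloring. The principal obstacle is step~(3): unlike the graph or rank-$3$ case, $|L_{t+1}(v)|$ here is influenced simultaneously by edges of every size $\ell \in \{2,\ldots,k\}$, and verifying the Lipschitz and certifiability coefficients required by Talagrand forces a careful overlap analysis that combines triangle-freeness with the full family of codegree bounds $\delta_{s,\ell}$. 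This bookkeeping across all $\ell$ and $s$ is where the argument most sharply diverges from prior hypergraph proofs and is likely the most delicate technical ingredient.
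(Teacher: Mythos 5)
Your sketch is essentially the Johansson/Cooper--Mubayi ``entropy'' (keep-probability) route, but the paper explicitly moves away from that approach: it states that the entropy method ``does not readily generalize to higher ranks ... due to the increasing complexity of concentration analysis brought on by codegrees of high rank hypergraphs.'' The actual proof follows the Pettie--Su idea of concentrating the \emph{average} $c$-degree rather than controlling each color's degree via an entropy/keep-probability scheme. More concretely, your proposal is missing several of the ideas that make the paper's argument work and that you would almost certainly need. First, when a vertex $u$ in an edge $e$ is permanently colored $c$, the remaining vertices of $e$ inherit a new constraint on color $c$ of lower uniformity (i.e., $e - u$), so one must maintain a family of per-color, per-uniformity constraint hypergraphs $\Hc^i_{\ell,c}$; your proposal has no mechanism for edges shrinking and constraints migrating between uniformities, which is essential in the hypergraph setting.

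Second, your plan to track the invariants $|L_t(v)|/\Delta_{\ell,t}^{1/(\ell-1)}$ separately for each $\ell$ runs into exactly the problem the paper flags: because edges of size $s$ shrink into edges of size $\ell<s$, the per-$\ell$ quantities are not monotone across rounds and their joint control requires keeping track of an unwieldy hierarchy of codegrees $\delta_{s,\ell}$. The paper's substitute is a \emph{single linear} weighted color-degree $d_i(u,c) = \sum_\ell (\varphi_1 p_i)^{k-\ell} d^i_\ell(u,c)$ whose weights are chosen so that shrinking an $s$-edge to an $\ell$-edge does not increase it; this linearity is what makes both the expectation calculation and the Talagrand concentration tractable. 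Third, the paper introduces a codegree-reduction algorithm (Proposition~\ref{prop:redu}) that actively contracts high-codegree sets into smaller edges after each round, which obviates tracking all $\delta_{s,\ell}$ recursively; your proposal relies on codegree bounds staying controlled purely from the input hypothesis (ii), which only holds at round $0$. Finally, the concentration step genuinely needs the Delcourt--Postle variant of Talagrand's inequality with exceptional outcome sets and a \emph{linear} (not quadratic) dependence on the Lipschitz parameter, together with Janson's inequality to justify the near-independence you invoke; a generic appeal to ``Talagrand plus triangle-freeness'' as in the graph case does not survive the overlap analysis at rank $k\ge 4$. As written, step~(3) is not a ``delicate ingredient'' but an unresolved obstacle, and it is precisely what the paper's new definitions and lemmas were designed to circumvent.
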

Here, one may interpret $\Delta$ as a rescaled bound on the degrees of $\Hc$, based on the uniformity. Specifically, when $\Hc$ is $k$-uniform, we have $\Delta_k \leq \Delta$.
%
%

\subsection{Applications to sparse hypergraphs coloring}
Alon, Krivelevich and Sudakov~\cite{alon1999coloring} extended~\eqref{johansson} by showing that for a graph $G$ with maximum degree $\Delta$, if every vertex
$u$ is in at most $\Delta^2/f$ triangles, then 
\begin{equation}\label{ref:AKS}
\chi(G) = \bO(\Delta/\log f),
\end{equation}
where $\Delta\rightarrow\infty$.
This was later generalized to rank $3$ hypeprgraphs due to the work of Cooper and Mubayi~\cite{cooper2016coloring}. To state their result, we first recall some terminology from~\cite{cooper2016coloring}.

Given two hypergraphs $\Fc_1$ and $\Fc_2$, a map $\phi : V(\Fc_1) \rightarrow V(\Fc_2)$ is an \textit{isomorphism} if
for all $E\subset V(\Fc_1)$, $\phi(E)\in \Fc_2$ if and only if $E\in \Fc_1$. If there exists an isomorphism $\phi : V (\Fc_1) \rightarrow V (\Fc_2)$, we say $\Fc_1$ is \textit{isomorphic} to $\Fc_2$ and denoted it by $\Fc_1\cong_{\phi}\Fc_2$.
For two hypergraphs $\Fc, \Hc$ and a vertex $v\in V(\Fc)$, let
\[
\Delta_{\Fc, v}(\Hc)=\max_{u\in V(\Hc)}|\{\Fc'\subseteq \Hc: \Fc'\cong_{\phi} \Fc \text{ and } \phi(u)=v\}|
\]
and 
\[
\Delta_{\Fc}(\Hc) = \min_{v\in V(\Fc)}\Delta_{\Fc, v}(\Hc).
\]

Cooper and Mubayi~\cite{cooper2016coloring} proved the following theorem.
\begin{thm}[Cooper and Mubayi~\cite{cooper2016coloring}]\label{CMspar}
Let $\Hc$ be a rank $3$ hypergraph with maximum $3$-degree $\Delta_3$ and maximum 2-degree $\Delta_2$.
Let $\Tc$ denote the family of rank $3$ triangles.
If 
\[
\Delta_{T}(\Hc)\leq \left(\max\left\{\Delta_3^{1/2}, \Delta_2\right\}\right)^{v(T)-1}/f
\]
for all $T\in \Tc$, then
\[
\chi(\Hc)\leq \bO\left(\max \left\{\left(\frac{\Delta_3}{\log f} \right)^{1/2},\ \frac{\Delta_{2}}{\log f} \right\}\right).
\]
\end{thm}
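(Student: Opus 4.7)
The plan is to reuse the semi-random coloring machinery developed for Theorem~\ref{mainthm1}, exploiting the authors' observation (just before Section~1.2) that triangle-freeness enters their proof only through the ``almost independence'' statement of Lemma~\ref{lem:dep}. It therefore suffices to show that the sparsity hypothesis $\Delta_{T}(\Hc) \le (\max\{\Delta_3^{1/2},\Delta_2\})^{v(T)-1}/f$ implies the same independence conclusion, paying a $\log f$ factor in place of $\log \Delta$ wherever it appears. Setting $D := \max\{\Delta_3^{1/2}, \Delta_2\}$, I would run the nibble with palette-to-degree ratio $K_i / T_i \asymp (\log f)/D$; a final Lov\'asz Local Lemma step then finishes the coloring with $\bO(D/\log f)$ colors, which is the claimed bound.

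The key estimate is a Janson calculation. Fix an uncolored vertex $u$ and a color $c$, and for each $2$-neighbour $v$ of $u$ in the current (reduced) hypergraph, let $B_v$ be the event that $c$ is blocked at $v$ during the round. Apply Janson's inequality to $\sum_v B_v$. The correlation term $\Delta^\star$ is a weighted sum of joint blocking probabilities over pairs $(v,v')$ whose blocking configurations share a vertex outside $\{u,v,v'\}$; each such pair corresponds to an embedding into $\Hc$ of one of the rank-$3$ triangle shapes ($C_3$, $F_5$, or $K_4^-$) rooted at $u$. By hypothesis, the number of such embeddings through any vertex is at most $D^{v(T)-1}/f$, and multiplying by the per-configuration color-survival probability (which scales as $K_i^{-(v(T)-1)}$) yields $\Delta^\star = \bO(1/f)$ times the Janson main term. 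Janson then produces the same near-independence conclusion as in the triangle-free case, with an additive slack of order $1/f$ per round, which sums to $o(1)$ over the $\bO(\log f)$ rounds.

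The main obstacle is the interaction between the codegree-reduction procedure of Section~\ref{sec:coredu} and the sparsity hypothesis. Contracting a rank-$3$ edge to a rank-$2$ edge (because some vertex of it has been pre-colored) changes the triangle shape a given configuration represents, so one must verify that $\Delta_T$ does not blow up under reduction. This follows because any triangle in the reduction lifts to a triangle in $\Hc$ with bounded combinatorial overhead, but the bookkeeping must be carried out uniformly for all three shapes and checked to respect both the $\Delta_3^{1/2}$ and the $\Delta_2$ branches of the maximum in the hypothesis. Once the modified form of Lemma~\ref{lem:dep} is in hand, the remainder of the argument is a routine recomputation of the parameter trajectory in the proof of Theorem~\ref{mainthm1} with $\log f$ in place of $\log \Delta$, culminating in the claimed bound $\chi(\Hc) \le \bO(\max\{(\Delta_3/\log f)^{1/2}, \Delta_2/\log f\})$.
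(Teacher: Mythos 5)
The statement you are proving is Cooper and Mubayi's theorem, which this paper does not reprove directly; instead the paper derives it (and its generalization, Theorem~\ref{sparsethm}) via a partition lemma (Lemma~\ref{lem:parti}, imported from~\cite{cooper2016coloring}): one first runs the codegree reduction once on $\Hc$, then partitions the vertex set into $\bO(\Delta^{1/(k-1)}/f)$ parts each of which induces a \emph{triangle-free} hypergraph of small degree, and finally applies the triangle-free theorem (Theorem~\ref{mainthm1}) to each part black-box. Your proposal is a genuinely different route: you want to run the nibble directly on the triangle-sparse hypergraph and replace the triangle-freeness used in Lemma~\ref{lem:dep} by a quantitative Janson estimate with a $\bO(1/f)$ slack.

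The gap is in the stability of the sparsity hypothesis under the nibble. Lemma~\ref{lem:dep} is applied at round $\rd$ to the evolving single-color hypergraph $\bigcup_{\ell}\Hc^{\rd-1}_{\ell,c}$, not to $\Hc$. In the triangle-free case, Proposition~\ref{trianglefree} shows that this per-color hypergraph \emph{stays} triangle-free through the edge-shrinking (step 6) and the codegree reduction (step 8). Your argument instead needs a quantitative analogue: that $\Delta_T\bigl(\bigcup_{\ell}\Hc^{\rd-1}_{\ell,c}\bigr)$ remains $\bO(D^{v(T)-1}/f)$ for every triangle shape $T$ and every $\rd$. You state this as the ``main obstacle'' and assert it ``follows because any triangle in the reduction lifts to a triangle in $\Hc$ with bounded combinatorial overhead,'' but that is exactly where the argument is fragile: a triangle $T$ in the shrunk hypergraph lifts to a triangle $T'$ in $\Hc$ with \emph{more} vertices, and the hypothesis bounds $\Delta_{T'}(\Hc)$ by $D^{v(T')-1}/f \geq D^{v(T)-1}/f$, so the lifting inequality goes the wrong way unless one can charge each lifted triangle against the probability that the extra $v(T')-v(T)$ vertices were colored $c$. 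That charging is exactly the kind of concentration bookkeeping that the rest of Section~\ref{sec:concede} labours over for degrees, and none of it is present for triangle counts; after $T=\Theta(\log f)$ rounds a per-round constant-factor growth in the triangle count would already give a polynomial in $f$, destroying the $\Delta^\star = \bO(1/f)$ claim. The remark in the paper (end of Section~1.1) that triangle-freeness could be ``replaced by any condition which establishes the conclusion of Lemma~\ref{lem:dep}'' is a statement about the \emph{algorithmic} hypergraphs, not about $\Hc$; the partition route exists precisely so that the invariant that persists through the nibble is the one (triangle-freeness) that Proposition~\ref{trianglefree} actually preserves. Until you prove a Proposition~\ref{trianglefree}-analogue for $\Delta_T$, or pre-process as the paper does, the proposed proof does not close.
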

The main idea behind both~\eqref{ref:AKS} and Theorem~\ref{CMspar} is the following: if a graph/hypergraph $\Hc$ is \textit{sufficiently sparse} (i.e., has bounded degrees and codegrees), and every vertex lies in \textit{not many} triangles, then we can partition $\Hc$ into \textit{a few} graphs/hypergraphs such that each of them is \textit{triangle-free}; after that, we just apply the known results for triangle-free graphs on each part individually. 
As the main contribution of their paper, Cooper and Mubayi~\cite{cooper2016coloring} established such partition lemma (see Section~\ref{sec:spars} for details) in a even more general set-up: the hypergraph can be of any rank, and triangles can be replaced by other families of some fixed hypegraphs.
Therefore, the only missing ingredient for extending~\eqref{ref:AKS} to any rank is in proving the corresponding result for the chromatic number of triangle-free hypergraphs.

By using our main theorem then, we generalize the results of~Alon, Krivelevich and Sudakov~\cite{alon1999coloring} and Cooper and Mubayi~\cite{cooper2016coloring} to all hypergraphs as follows.

\begin{thm}\label{sparsethm}
Fix $k\geq 3$. Let $\Hc$ be a rank $k$ hypergraph with maximum $\ell$-degree at most $\Delta_{\ell}$ for each $2\leq \ell\leq k$.
Denote by $\Tc$ the family of rank $k$ triangles.
If 
\[
\Delta_{T}(\Hc)\leq \left(\max_{2\leq\ell\leq k}\Delta_{\ell}^{1/(\ell-1)}\right)^{v(T)-1}/f
\]
for all $T\in \Tc$, then
\[
\chi(\Hc)\leq \bO\left(\max_{2\leq \ell \leq k} \left\{\left(  \frac{\Delta_{\ell}}{\log f} \right)^{\frac{1}{\ell-1}} \right\}\right).
\]
\end{thm}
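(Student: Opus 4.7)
The plan is to deduce Theorem~\ref{sparsethm} by feeding the output of the Cooper--Mubayi partition lemma into Theorem~\ref{mainthm1}. The introduction makes this reduction explicit: since~\cite{cooper2016coloring} already supplies an arbitrary-rank partition result that turns a sparse hypergraph with few triangles through each vertex into a bounded collection of triangle-free pieces, the only previously missing ingredient was a triangle-free chromatic bound in general rank, and Theorem~\ref{mainthm1} now supplies it.

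Set $D := \max_{2 \leq \ell \leq k} \Delta_\ell^{1/(\ell-1)}$, so that the hypothesis reads $\Delta_T(\Hc) \leq D^{v(T)-1}/f$ for every triangle $T \in \Tc$, where $v(T) \leq 3(k-1)$ uniformly in $T$. I would pick a partition parameter $r = c_1 \cdot D \cdot f^{-\alpha}$ with $c_1, \alpha > 0$ depending only on $k$, chosen small enough that $(D/r)^{v(T)-1}/f \leq 1/2$ for every $T \in \Tc$. Randomly partitioning $V(\Hc)$ into $V_1, \ldots, V_r$ then yields, with positive probability (via Chernoff plus the Lov\'asz Local Lemma, together with a union bound over $\ell$ and over vertices), that each induced subhypergraph $\Hc_i := \Hc[V_i]$ satisfies $\Delta_\ell(\Hc_i) = \bO(\Delta_\ell / r^{\ell-1})$ for all $\ell$ and contains only $\bO(1)$ triangles through each vertex. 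Greedily deleting one edge from each surviving triangle then produces a triangle-free $\Hc_i'$ without blowing up the maximum $\ell$-degrees by more than a constant factor.

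Applying Theorem~\ref{mainthm1} to each such $\Hc_i'$ yields
\[
\chi(\Hc_i') \;\leq\; c_k \cdot \max_{2 \leq \ell \leq k} \left( \frac{\Delta_\ell / r^{\ell-1}}{\log(\Delta_\ell / r^{\ell-1})} \right)^{1/(\ell-1)}.
\]
Since the $V_i$ partition $V(\Hc)$ and any proper colouring of $\Hc_i'$ also properly colours $\Hc_i$, we get $\chi(\Hc) \leq \sum_{i=1}^{r} \chi(\Hc_i') \leq r \cdot \max_i \chi(\Hc_i')$. With the above choice of $r$, the ratio $\Delta_\ell / r^{\ell-1}$ is a positive power of $f$ (times a constant depending on $k$), so $\log(\Delta_\ell / r^{\ell-1}) = \Theta(\log f)$, and a short computation turns the bound into $\bO\bigl(\max_{2 \leq \ell \leq k} (\Delta_\ell/\log f)^{1/(\ell-1)}\bigr)$, which is the conclusion of Theorem~\ref{sparsethm}.

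The only real obstacle is choosing the partition parameters so that degree concentration and the triangle-killing step both succeed simultaneously across every $\ell \in \{2, \ldots, k\}$ and every triangle type $T \in \Tc$. Since $v(T)$ ranges over a finite set of values depending on $k$, the exponent $\alpha$ must be taken as the minimum across all these types, after which one verifies that $\Delta_\ell / r^{\ell-1}$ is still a large enough power of $f$ for the $\log$-denominator in Theorem~\ref{mainthm1} to behave like $\log f$. All of this is a fixed-$k$ calculation and introduces no new ideas beyond those already appearing in the Cooper--Mubayi treatment of rank~$3$.
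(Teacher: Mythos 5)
There is a real gap: you never secure the codegree bounds that the partition argument needs, and the theorem's hypotheses do not supply them directly. Theorem~\ref{sparsethm} only assumes bounds on $\Delta_\ell(\Hc)$ and on $\Delta_T(\Hc)$; it says nothing about the codegrees $\delta_{s,\ell}(\Hc)$. But the Cooper--Mubayi partition lemma that the paper invokes (Lemma~\ref{lem:parti}) requires the input hypergraph to be $(\Delta,\omega_2,\ldots,\omega_k)$-sparse, i.e.\ $\delta_{s,\ell}\leq\Delta^{(\ell-s)/(k-1)}\omega_\ell$ for all $s<\ell$, and if you instead try to run your own random-partition calculation you hit the same wall: your ``Chernoff plus the Lov\'asz Local Lemma'' step for concentrating $\Delta_\ell(\Hc_i)$ silently assumes that the $\sim\Delta_\ell$ edges of size $\ell$ through a fixed vertex land in part $V_i$ nearly independently, which fails catastrophically if many of those edges share a $(\ell-1)$-subset --- exactly the situation that unbounded codegrees permit. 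Without a codegree hypothesis, nothing prevents $\delta_{\ell-1,\ell}(\Hc)$ from being as large as $\Delta_\ell$, in which case the induced $\ell$-degree in a random part can be $\Delta_\ell/r$ rather than $\Delta_\ell/r^{\ell-1}$, and the whole count of colors blows up.

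The paper's proof inserts a preliminary \emph{codegree reduction} step before partitioning: it replaces $\Hc$ by a $g$-reduction $\Hc'$ with $g(s,\ell)=\Delta^{(\ell-s)/(k-1)}$, so that $\Hc'$ automatically has the required codegree bounds, while Proposition~\ref{prop:redu} guarantees that a proper coloring of $\Hc'$ is proper for $\Hc$. It then has to verify (via Claims~\ref{claim:De} and~\ref{claim:tri}) that the contraction operation does not destroy the degree bounds or the bound on the number of triangles through each vertex --- both of which could a priori be inflated by the new contracted edges. These verifications, which you have not accounted for, are where the constants $(k 2^k)^{\ell}$ and the adjustment $f_1 = f/(\log f)^{(k-2)(3k-4)/(k-1)}$ come in. A secondary (but easily fixable) mismatch is that Lemma~\ref{lem:parti} already returns genuinely triangle-free parts, so the ``greedily delete one edge per remaining triangle'' step in your sketch is not what is needed; more importantly, that step is fragile because a single size-$2$ edge $\{x,y\}$ can lie in many triangles, so deleting edges one triangle at a time need not terminate with bounded degree loss without a separate argument.
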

Notice that the hypotheses of Theorem~\ref{sparsethm} are satisfied when $\Hc$ is linear, $k$-uniform and $f =\Delta_k^{1 - 1/(k-1)}$, so Theorem~\ref{sparsethm} implies \eqref{linearthm}.

Using Theorem~\ref{sparsethm}, we also extend a result of Cooper-Mubayi~\cite[Theorem 5]{cooper2016coloring} from $k$-uniform hypergraphs to all hypergraphs with the following theorem.
\begin{thm}\label{CMspar2}
Fix $k\geq 3$. Let $\Hc$ be a rank $k$ hypergraph with maximum $\ell$-degree at most $\Delta_{\ell}$ for each $2\leq \ell\leq k$.
Suppose that for all $2\leq s<\ell \leq k$, the maximum $(s, \ell)$-codegree
\[
\delta_{s, \ell}(\Hc)\leq \left(\max_{2\leq\ell\leq k}\Delta_{\ell}^{1/(\ell-1)}\right)^{\ell-s}/f,
\]
and additionally for the graph triangle $T_0$,
\[
\Delta_{T_0}(\Hc)\leq \left(\max_{2\leq\ell\leq k}\Delta_{\ell}^{1/(\ell-1)}\right)^{2}/f.
\]
Then we have
\[
\chi(\Hc)\leq \bO\left(\max_{2\leq \ell \leq k} \left\{\left(  \frac{\Delta_{\ell}}{\log f} \right)^{\frac{1}{\ell-1}} \right\}\right).
\]
\end{thm}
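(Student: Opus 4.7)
The plan is to derive Theorem~\ref{CMspar2} from Theorem~\ref{sparsethm} by showing that the codegree and graph-triangle hypotheses of Theorem~\ref{CMspar2} imply, for every triangle $T \in \Tc$, the bound $\Delta_T(\Hc) \leq M^{v(T)-1}/f$ required by Theorem~\ref{sparsethm}, where $M := \max_{2 \leq \ell \leq k} \Delta_\ell^{1/(\ell-1)}$. Once this implication is established, Theorem~\ref{sparsethm} applies directly and yields the chromatic number bound of Theorem~\ref{CMspar2}.

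To prove the implication, fix a triangle $T$ with corner vertices $u, v, w$ and edges $e, f, g$ of sizes $\ell_1, \ell_2, \ell_3$. Because the condition $\{u,v,w\} \cap e \cap f \cap g = \emptyset$ in Definition~\ref{def:tri} forces $u \notin f$, $v \notin g$, $w \notin e$, the residual sets $A_e := e \setminus \{u,v\}$, $A_f := f \setminus \{v,w\}$, $A_g := g \setminus \{w,u\}$ all avoid $\{u,v,w\}$; writing $a, b, c$ for the sizes of their pairwise intersections and $d$ for the triple intersection, inclusion--exclusion gives $v(T) = \ell_1 + \ell_2 + \ell_3 - 3 - (a+b+c) + d$. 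Fixing a root image $u^* \in V(\Hc)$, I bound $\Delta_{T,u}(\Hc)$ by counting tuples $(e^*, f^*, g^*, v^*, w^*, \ldots)$ that correspond to valid embeddings of $T$ sending $u$ to $u^*$, split according to whether $T = T_0$.

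If $T = T_0$, all three edges have size $2$ and the required bound is exactly the explicit hypothesis $\Delta_{T_0}(\Hc) \leq M^2/f$. Otherwise at least one $\ell_i \geq 3$, and the idea is to pick the three edges together with the images of the overlap vertices in an order designed so that at one step the codegree hypothesis $\delta_{s,\ell} \leq M^{\ell-s}/f$ can be invoked with $2 \leq s < \ell$ and $\ell \geq 3$, thereby contributing the crucial factor $1/f$. At each step, picking an edge through a set of $s$ already-fixed vertices contributes at most $\Delta_\ell \leq M^{\ell-1}$ when $s \leq 1$ and at most $\delta_{s,\ell} \leq M^{\ell-s}/f$ when $2 \leq s < \ell$; tallying the $M$-exponents across all steps and using the inclusion--exclusion formula for $v(T)$ yields the bound $O(M^{v(T)-1}/f)$ as soon as one step succeeds in applying $\delta_{s,\ell}$ with $s \geq 2$ and $\ell \geq 3$.

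The main obstacle is choosing the ordering so that this succeeds. In naive orderings the ``final'' edge can end up being either of size $2$ (in which case no codegree bound with $s < \ell$ is available, since $\delta_{2,2} \leq 1$ is not $\leq 1/f$) or of size $\ell \geq 3$ but with $s = \ell$ (so only $\delta_{\ell,\ell} \leq 1$ is available and again no $1/f$ is gained). To handle these configurations, I consider several orderings of the edge selections---for instance, picking the largest edge first and exploiting the overlap pattern encoded by $a, b, c, d$ to produce a suitable intermediate codegree application---and verify that, for every admissible pattern of overlap sizes and every combination of edge sizes with not all $\ell_i = 2$, at least one ordering admits a valid application of the codegree hypothesis with $2 \leq s < \ell$ and $\ell \geq 3$. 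This case analysis is elementary but requires some care, and completes the proof.
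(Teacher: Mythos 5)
Your proposal follows exactly the paper's route: Theorem~\ref{CMspar2} is derived from Theorem~\ref{sparsethm} by showing that the codegree hypotheses, together with the explicit $T_0$ hypothesis, imply $\Delta_T(\Hc)=\bO\big(M^{v(T)-1}/f\big)$ for every rank-$k$ triangle $T$, which is precisely what the paper asserts (without detail). Your sketch of the counting argument is correct and the case analysis you flag is genuinely routine: for each choice of which edge is picked last, the factor $1/f$ fails only when that edge lies inside the union of the other two while those two meet only at a corner vertex, and all three such failure conditions hold simultaneously only for $T=T_0$ (forcing all residual sets $A_e, A_f, A_g$ to be empty), so for $T\neq T_0$ one of the three orderings always yields a codegree step $\delta_{s,\ell}$ with $2\le s<\ell$.
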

Observe that given such an $\Hc$, for any rank $k$ triangle $T$ (except for $T_0$), one can easily use the codegree conditions to show that 
\[
\Delta_{T}(\Hc)\leq \bO\left(\left(\max_{2\leq\ell\leq k}\Delta_{\ell}^{1/(\ell-1)}\right)^{v(T)-1}/f \right).
\]
Theorem~\ref{sparsethm} then immediately yields Theorem~\ref{CMspar2}.

\subsection{Applications to independence number of hypergraphs}
Closely related to coloring problems are questions about the independence number of hypergraphs.
The \textit{independence number $\alpha(\Hc)$} of a hypergraph $\Hc$ is the size of a largest set of vertices containing no edge of $\Hc$.
Using Turan's theorem, one can easily show that a $n$-vertex $k$-uniform hypergraphs with maximum degree $\Delta_k$ has $\alpha(\Hc)=\Omega\left(n/\Delta_k^{1/(k-1)}\right)$.
A seminal result of Ajtai, Koml{\'o}s, Pintz, Spencer, and Szemer{\'e}di~\cite{ajtai1982extremal} showed that this trivial lower bound could be improved by forbidding certain small subgraphs.

For $\ell\geq 2$, a \textit{(Berge) cycle} of length $\ell$ in $\Hc$ is a collection of $\ell$ edges $E_1,\cdots, E_{\ell}\in\Hc$ such that there exists $\ell$ distinct vertices $v_1,\cdots, v_{\ell}$ with $v_i\in E_i\cap E_{i+1}$ for $i\in[\ell-1]$ and $v_{\ell} \in E_{\ell}\cap E_1$.
\begin{thm}[Ajtai, Koml{\'o}s, Pintz, Spencer and Szemer{\'e}di~\cite{ajtai1982extremal}]\label{AKPSS}
Let $\Hc$ be a $k$-uniform hypergraph with maximum degree $\Delta_k$ that contains no cycles of length 2, 3, and 4. Then
\[
\alpha(\Hc)\geq c\cdot n\left(\frac{\log\Delta_k}{\Delta_k}\right)^{1/(k-1)}
\]
where $c$ depends only on $k$, not on $\Hc$.
\end{thm}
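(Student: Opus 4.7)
The plan is to derive Theorem~\ref{AKPSS} as an immediate corollary of Theorem~\ref{mainthm1} via the standard inequality $\alpha(\Hc)\geq n/\chi(\Hc)$, which holds because in any proper coloring of $\Hc$ with $\chi(\Hc)$ colors the largest color class is an independent set of size at least $n/\chi(\Hc)$.

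To apply Theorem~\ref{mainthm1} I first check its hypotheses. Since $\Hc$ is $k$-uniform, every edge has size exactly $k$, so $\Delta_{\ell}=0$ for all $2\leq\ell<k$, and the maximum on the right-hand side of Theorem~\ref{mainthm1} is attained at $\ell=k$, producing a quantity of order $(\Delta_k/\log\Delta_k)^{1/(k-1)}$. Next, the absence of Berge $3$-cycles in $\Hc$ implies triangle-freeness in the sense of Definition~\ref{def:tri}: any three edges $e,f,g$ and three vertices $u,v,w$ realizing that definition form, by construction, a Berge $3$-cycle on the vertex sequence $u,v,w$, and the authors explicitly remark that their notion of triangle refers to a subfamily of Berge triangles. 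Therefore $\Hc$ is triangle-free in the paper's sense, and Theorem~\ref{mainthm1} gives
\[
\chi(\Hc)\;\leq\;\chi_{\ell}(\Hc)\;\leq\;c\left(\frac{\Delta_k}{\log\Delta_k}\right)^{\frac{1}{k-1}}.
\]
Combining this with $\alpha(\Hc)\geq n/\chi(\Hc)$ yields $\alpha(\Hc)\geq c^{-1}\cdot n(\log\Delta_k/\Delta_k)^{1/(k-1)}$, which is the desired bound (with constant $c^{-1}$ depending only on $k$).

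The ``main obstacle'' here is really Theorem~\ref{mainthm1} itself, whose proof occupies the bulk of the paper; granted that result, the present deduction is three lines. The only minor subtlety worth flagging is that the AKPSS hypothesis forbidding Berge cycles of lengths $2$, $3$, and $4$ is strictly stronger than what this derivation needs: we only use the no-$3$-cycle condition to obtain triangle-freeness, whereas the no-$2$-cycle (linearity) and no-$4$-cycle assumptions were essential to the original nibble-based independent-set argument of Ajtai et al.\ but are not invoked when routing through the chromatic number. One should also verify tacitly that $\Delta_k$ may be assumed large enough that $\log\Delta_k>0$; the small-$\Delta_k$ cases are trivial since then $\Hc$ has an independent set of size $\Omega(n)$ outright.
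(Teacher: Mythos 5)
Your derivation is correct: no Berge $3$-cycles implies triangle-freeness in the sense of Definition~\ref{def:tri} (any triangle $\{e,f,g\}$ on $u,v,w$ is itself a Berge $3$-cycle via $v\in e\cap f$, $w\in f\cap g$, $u\in g\cap e$), so Theorem~\ref{mainthm1} applies with the maximum attained at $\ell=k$, and $\alpha(\Hc)\geq n/\chi(\Hc)$ finishes. Note, however, that the paper does not actually supply a proof of Theorem~\ref{AKPSS}; it is quoted as an attributed result of Ajtai, Koml{\'o}s, Pintz, Spencer and Szemer{\'e}di, and what the paper derives from Theorem~\ref{mainthm1} is the strictly stronger Theorem~\ref{mainthmind} (triangle-freeness alone suffices), declared to follow ``immediately'' without written proof. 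Your argument is exactly that implicit deduction, specialized back down to the AKPSS hypotheses, so there is no divergence from the paper's route---just a reconstruction of a step the paper left tacit; you correctly flag that the no-$2$-cycle and no-$4$-cycle assumptions are unused, which is precisely the paper's point in stating Theorem~\ref{mainthmind} as a strengthening.
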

Ajtai, Erd\H{o}s, Koml{\'o}s and Szemer{\'e}di~\cite{ajtai1981turan} proposed the problem on determining whether Theorem~\ref{AKPSS} could also be extended to other families of hypergraphs. 
In particular, Spencer~\cite{nesetril2012mathematics} conjectured that the same conclusion holds for linear hypergraphs, and this was later proved by Duke, Lefmann and R{\"o}dl~\cite{duke1995uncrowded}.

Our main result (Theorem~\ref{mainthm1}) immediately yields the following strengthed version of Theorem~\ref{AKPSS}, showing that the same lower bound holds even if $\Hc$ just contains no triangles.
\begin{thm}\label{mainthmind}
Let $\Hc$ be a $k$-uniform triangle-free hypergraph with maximum degree $\Delta_k$. 
Then 
\[
\alpha(\Hc)\geq c\cdot n\left(\frac{\log\Delta_k}{\Delta_k}\right)^{1/(k-1)}
\]
where $c$ depends only on $k$, not on $\Hc$.
\end{thm}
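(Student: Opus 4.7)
The plan is to deduce Theorem~\ref{mainthmind} directly from the main result Theorem~\ref{mainthm1} via the standard pigeonhole observation that $\alpha(\Hc) \geq n/\chi(\Hc)$ for any hypergraph $\Hc$ on $n$ vertices.

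First, I would apply Theorem~\ref{mainthm1} to the given $k$-uniform triangle-free hypergraph $\Hc$. Since every edge has size exactly $k$, we have $\Delta_\ell(\Hc) = 0$ for all $2 \leq \ell < k$, so only the $\ell = k$ term contributes nontrivially to the maximum in the bound; edges of other sizes simply do not exist and impose no coloring constraint (so the degenerate terms of the form ``$0/\log 0$'' can be dropped from the max). This yields
\[
\chi(\Hc) \;\leq\; \chi_{\ell}(\Hc) \;\leq\; c \cdot \left(\frac{\Delta_k}{\log \Delta_k}\right)^{1/(k-1)},
\]
using that $\chi(\Hc) \leq \chi_{\ell}(\Hc)$ as noted in the introduction.

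Next, I would fix any proper coloring of $\Hc$ using at most $\chi(\Hc)$ colors. The color classes partition $V(\Hc)$, and by the definition of a proper coloring no edge is monochromatic, which in the $k$-uniform setting is equivalent to saying that no color class contains an edge. Hence every color class is an independent set, and the pigeonhole principle yields a color class of size at least $n/\chi(\Hc)$. Combining this with the bound above gives
\[
\alpha(\Hc) \;\geq\; \frac{n}{\chi(\Hc)} \;\geq\; \frac{1}{c}\cdot n\left(\frac{\log \Delta_k}{\Delta_k}\right)^{1/(k-1)},
\]
which is the desired conclusion with the new constant $c' = 1/c$ (depending only on $k$).

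There is essentially no substantive obstacle, since all of the work is hidden inside Theorem~\ref{mainthm1}; the deduction is a one-line pigeonhole argument. The only point that requires any care is the degenerate interpretation of the max in Theorem~\ref{mainthm1} when some $\Delta_\ell = 0$, and this is immediately handled by ignoring the corresponding uniformities, which carry no edges and hence no constraints.
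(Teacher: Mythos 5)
Your proposal is correct and follows exactly the (implicit) argument in the paper: the paper states that Theorem~\ref{mainthm1} ``immediately yields'' Theorem~\ref{mainthmind} without further elaboration, and the deduction you spell out --- specialize the max in Theorem~\ref{mainthm1} to the $\ell=k$ term since all other $\Delta_\ell$ vanish for a $k$-uniform hypergraph, then invoke $\alpha(\Hc)\geq n/\chi(\Hc)$ --- is precisely that omitted pigeonhole step.
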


Rather than considering $F$-free hypergraphs, Kostochka, Mubayi and Verstra{\"e}te~\cite{kostochka2014independent} proved the following general result on the independence number of $k$-uniform hypergraphs given the maximum $(k-1, k)$-codegree.
\begin{thm}[Kostochka, Mubayi and Verstra{\"e}te~\cite{kostochka2014independent}]\label{KMVCo}
Fix $k\geq 3$. There exists $c_k>0$ such that if $\Hc$ is an $k$-uniform hypergraph on $n$ vertices with the maximum $(k-1, k)$-codegree $\delta_{k-1, k}(\Hc):=d< n/(\log n)^{3(r-1)^2}$, then
\[
\alpha(\Hc) \geq c_k\left(\frac{n}{d}\log\frac{n}{d}\right)^{\frac{1}{k-1}},
\]
where $c_k>0$ and $c_k \sim k/e$ as $k\rightarrow\infty$.
\end{thm}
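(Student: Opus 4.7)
The plan is to prove Theorem~\ref{KMVCo} via a semi-random nibble construction of a large independent set, adapting the Ajtai-Koml\'os-Pintz-Spencer-Szemer\'edi strategy but using the bounded $(k-1,k)$-codegree in place of the girth/no-short-cycle hypothesis. The overall moral is that the codegree control prevents the same kind of ``clustering of many edges through a small set of vertices'' that a girth assumption would forbid outright, and this turns out to suffice to carry through concentration.

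First, I would run an iterative procedure in rounds. In round $t$, given a current subhypergraph $\Hc_t$ on vertex set $V_t$, sample each $v \in V_t$ independently with some small probability $p_t$ to form $S_t$. Let $I_t \subseteq S_t$ be those vertices of $S_t$ not contained in any edge of $\Hc_t$ lying entirely inside $S_t$, and add $I_t$ to the independent set under construction. Then set $V_{t+1} := V_t \setminus (S_t \cup N_{\Hc_t}(I_t))$, where $N_{\Hc_t}(I_t)$ denotes the vertices $u$ for which some edge $e \ni u$ of $\Hc_t$ satisfies $e \setminus \{u\} \subseteq I_t$, and update $\Hc_{t+1} := \Hc_t[V_{t+1}]$.

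Second, I would track three quantities across rounds: $n_t := |V_t|$, the maximum degree $\Delta_t$ of $\Hc_t$, and the maximum $(k-1,k)$-codegree $d_t$ of $\Hc_t$. In expectation, $n_{t+1} \approx n_t(1-p_t)$ and $\Delta_{t+1} \approx \Delta_t(1-p_t)^{k-1}$, since an edge through a given vertex survives only if all $k-1$ of its other vertices avoid $S_t$. The codegree $d_t$ is non-increasing, and the hypothesis $d < n/(\log n)^{3(k-1)^2}$ guarantees that $d_t$ stays small enough that pairs of $(k-1)$-subsets of $S_t$ sharing many edges are too rare to spoil concentration; this is essential because such pairs are the dominant source of negative correlation when bounding the expected number of vertices killed per round. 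A choice of $p_t \asymp (\log(n/d)/d)^{1/(k-1)} \cdot 2^{t/(k-1)}$ (suitably normalized) ensures $\Delta_t$ descends geometrically toward $O(1)$.

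Third, I would invoke Talagrand's inequality (for instance, in the form of Theorem~\ref{talagrand} referenced in the excerpt) at each round to establish concentration of $n_t$, $\Delta_t$, and $d_t$ about their expected trajectories, with error terms that sum over the $T = \Theta(\log(n/d))$ rounds to $o(1)$ fractions of the main term. After $T$ rounds we obtain $n_T = \Omega\!\left( n \cdot (\log(n/d)/d)^{1/(k-1)} \right)$ surviving vertices spanning a subhypergraph with $\Delta_T = O(1)$, on which a Tur\'an-type bound extracts an independent set of comparable size, delivering the required lower bound with $c_k \sim k/e$ in the limit (which matches Tur\'an's constant for hypergraphs of uniformity $k$).

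The main obstacle is the codegree tracking: while the analysis of $n_t$ and $\Delta_t$ parallels the graph case, showing that $d_t$ does not transiently inflate due to unlucky coincidences in the sample $S_t$ requires the full strength of the hypothesis $d < n/(\log n)^{3(k-1)^2}$. The exponent $3(k-1)^2$ is calibrated precisely to absorb the concentration deficits accumulated over $\Theta(\log n)$ rounds, with one factor per $(k-1)$-chain of potential codegree interactions. I would expect this concentration bookkeeping, rather than the existence of the nibble itself, to be the technical heart of the argument.
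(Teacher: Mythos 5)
This statement is \emph{not} proved in the present paper: Theorem~\ref{KMVCo} is a cited result of Kostochka, Mubayi and Verstra\"ete from another paper, quoted here only as background. The theorem that \emph{is} proved in this paper (the unnumbered theorem immediately following~\ref{KMVCo}) is a related but genuinely different statement --- it extends to non-uniform hypergraphs and drops the upper-bound hypothesis $d < n/(\log n)^{3(k-1)^2}$, but, as the authors explicitly acknowledge, it does not recover the asymptotically optimal constant $c_k \sim k/e$. Moreover, the paper proves that neighboring theorem by an entirely different route: a chromatic-number bound (Theorem~\ref{CMspar2}) obtained from the triangle-partition lemma (Lemma~\ref{lem:parti}) and the main triangle-free theorem~\ref{mainthm1}, not by a direct nibble on the independence number.

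Your sketch is broadly in the right spirit for how one would prove a result of this type directly (and KMV's original argument is indeed a random-greedy/semi-random method), but two points need correction. First, you describe the need to show ``that $d_t$ does not transiently inflate due to unlucky coincidences in the sample $S_t$.'' This is not a concern: $\Hc_{t+1} = \Hc_t[V_{t+1}]$ is an induced subhypergraph, so every $(k-1,k)$-codegree can only decrease; $d_t \le d$ for all $t$, deterministically, with no concentration argument required. The genuine danger is not that $d_t$ grows but that $n_t$ shrinks, so the \emph{ratio} $d_t/n_t$ (the quantity that actually controls how well-spread the edges remain) degrades over rounds; that is what the exponent $3(k-1)^2$ in the hypothesis is calibrated against. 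Second, the claim that $c_k \sim k/e$ ``matches Tur\'an's constant for hypergraphs of uniformity $k$'' is not accurate; the $k/e$ asymptotic in KMV comes from a careful analysis of a (local) random greedy process, not from plugging a final-degree bound into a Tur\'an-type cleanup step. A naive nibble-then-Tur\'an argument as sketched would lose constant factors and would not produce the claimed $c_k \to k/e$ behavior.
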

Our next theorem improves and extends Kostochka, Mubayi and Verstra{\"e}te’s result on the independence
number to non-uniform hypergraphs and chromatic number. We also weaken the hypothesis
by not requiring any upper bound condition on codegrees. 
On the other hand, as $k\rightarrow \infty$, Theorem~\ref{KMVCo} is best possible
including the value of the constant $c_k$ (see the matching lower bound construction in~\cite{kostochka2014independent}), whereas we do not optimize the constant in our result below.
\begin{thm}
Fix $k\geq 3$, and let $\Hc$ be a rank $k$ hypergraph on $n$ vertices with the maximum $(\ell-1, \ell)$-codegree $\delta_{\ell-1, \ell}(\Hc)\leq d_{\ell}$. Set
\[
f:=\min_{2\leq \ell \leq k}\left\{\left(n/d_\ell\right)^{\frac{1}{\ell-1}}\right\},
\] 
and assume additionally that for the graph triangle $T_0$,
\[
\Delta_{T_0}(\Hc)\leq n^2/f^3.
\]
Then we have
\[
\chi(\Hc)\leq \bO\left(\max_{2\leq \ell \leq k} \left\{\left(\frac{n^{\ell-2}d_{\ell}}{\log f} \right)^{\frac{1}{\ell-1}} \right\}\right).
\]
In particular, 
\[
\alpha(\Hc) \geq \Omega\left(\min_{2\leq\ell\leq k}\left\{\left(\frac{n}{d_{\ell}}\log f\right)^{\frac{1}{\ell-1}}\right\}\right).
\]
\end{thm}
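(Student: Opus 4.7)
The plan is to deduce this theorem as a direct corollary of Theorem~\ref{CMspar2}, by bootstrapping the single hypothesis $\delta_{\ell-1,\ell}(\Hc) \leq d_\ell$ into all the degree and codegree bounds that Theorem~\ref{CMspar2} requires.

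The key step is a one-line double counting. For any $s$-subset $S$ with $1 \leq s < \ell$, each $\ell$-edge $e \supseteq S$ contains exactly $\ell - s$ many $(\ell-1)$-sets $T$ with $S \subseteq T \subseteq e$, so
\[
(\ell-s)\,\deg_\ell(S,\Hc) \;=\; \sum_{T \,\supseteq\, S,\ |T|=\ell-1} \deg_\ell(T,\Hc) \;\leq\; \binom{n-s}{\ell-1-s}\, d_\ell.
\]
This yields $\delta_{s,\ell}(\Hc) = O(n^{\ell-1-s} d_\ell)$, and at $s=1$ in particular $\Delta_\ell(\Hc) = O(n^{\ell-2} d_\ell)$, with constants depending only on $k$. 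Feeding these upper bounds into Theorem~\ref{CMspar2} as the $\Delta_\ell$ values (after absorbing a sufficiently large multiplicative constant) and combining with $d_\ell \leq n/f^{\ell-1}$ from the definition of $f$, we get $\Delta_\ell = O((n/f)^{\ell-1})$, with equality (up to constants) attained at the index $\ell_0$ realizing the minimum in $f$. Hence $D := \max_\ell \Delta_\ell^{1/(\ell-1)} = \Theta(n/f)$.

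With $D = \Theta(n/f)$ in hand, the two hypotheses of Theorem~\ref{CMspar2} follow routinely. The codegree condition $\delta_{s,\ell}(\Hc) \leq D^{\ell-s}/f$ reduces to comparing $O(n^{\ell-s}/f^{\ell-1})$ with $\Theta(n^{\ell-s}/f^{\ell-s+1})$, which holds precisely because $s \geq 2$ gives $\ell - 1 \geq \ell - s + 1$. The graph-triangle condition $\Delta_{T_0}(\Hc) \leq D^2/f = \Theta(n^2/f^3)$ is exactly our additional hypothesis. Theorem~\ref{CMspar2} then gives
\[
\chi(\Hc) \;\leq\; \bO\!\left(\max_{2 \leq \ell \leq k}\Big(\frac{\Delta_\ell}{\log f}\Big)^{1/(\ell-1)}\right) \;\leq\; \bO\!\left(\max_{2 \leq \ell \leq k}\Big(\frac{n^{\ell-2} d_\ell}{\log f}\Big)^{1/(\ell-1)}\right),
\]
after substituting $\Delta_\ell = O(n^{\ell-2} d_\ell)$.

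The independence number statement then follows from the elementary inequality $\alpha(\Hc) \geq n/\chi(\Hc)$, together with the algebraic rearrangement that turns $n / \max_\ell(n^{\ell-2} d_\ell / \log f)^{1/(\ell-1)}$ into $\min_\ell (n \log f / d_\ell)^{1/(\ell-1)}$. There is no real obstacle to the argument: it is essentially a bookkeeping exercise, and the only subtle point is recognizing that the slack $s \geq 2$ is exactly what makes the codegree descent satisfy the requirement $\delta_{s,\ell}(\Hc) \leq D^{\ell-s}/f$, whereas the analogous statement for the graph triangle $T_0$ does not follow automatically from codegree information and must be supplied as the separate hypothesis in the statement.
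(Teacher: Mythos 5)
Your proposal is correct and follows essentially the same approach as the paper: both derive $\Delta_\ell \leq n^{\ell-2}d_\ell$ and $\delta_{s,\ell} \leq n^{\ell-s-1}d_\ell$ from the $(\ell-1,\ell)$-codegree assumption, observe that $\max_\ell (n^{\ell-2}d_\ell)^{1/(\ell-1)} = n/f$, verify the hypotheses of Theorem~\ref{CMspar2} (with the $s\geq 2$ slack doing the work), and then invoke it. Your write-up is a bit more explicit about the double-count and the final $\alpha(\Hc)\geq n/\chi(\Hc)$ rearrangement, but the argument is the same.
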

\begin{proof}
Observe that for every $2\leq s<\ell \leq k$, we have $\Delta_{\ell}(\Hc)\leq n^{\ell-2}d_{\ell}$, and
\[
\delta_{s, \ell}(\Hc)\leq n^{\ell-s-1}d_{\ell} = (n^{\ell-2}d_{\ell})^{\frac{\ell-s}{\ell-1}}/\left(n/d_{\ell}\right)^{1- \frac{\ell-s}{\ell-1}}\leq (n^{\ell-2}d_{\ell})^{\frac{\ell-s}{\ell-1}}/f.
\]
Moreover, for the graph triangle $T_0$, we have
\[
\Delta_{T_0}(\Hc)\leq n^2/f^3 = \left(\max_{2\leq \ell\leq k}\left\{(n^{\ell-2}d_{\ell})^{\frac{1}{\ell-1}}\right\}\right)^{2}/f
\]
Then by Theorem~\ref{CMspar2}, we obtain that
\[
\chi(\Hc)\leq \bO\left(\max_{2\leq \ell \leq k} \left\{\left(\frac{n^{\ell-2}d_{\ell}}{\log f} \right)^{\frac{1}{\ell-1}} \right\}\right). 
\]

\end{proof}

\section{Informal proof overview and Organization of paper}\label{sec:overview}
Like many previous works, our proof technique is rooted in the celebrated \emph{nibble} method, while incorporating several innovative and critical modifications to tackle the challenges of hypergraph coloring problems.

The \emph{nibble} method (also known as the \emph{semi-random} method) was first introduced by R{\"o}dl~\cite{rodl1985packing} in 1985, to resolve the Erd\H{o}s-Hanani conjecture about the existence of asymptotically optimal designs.
It was later discovered by many researchers (e.g.,~\cite{cooper2015list, frieze2008chromatic, frieze2013coloring, jamall2011brooks, johansson1996asymptotic, kim1995brooks}) that the R{\"o}dl nibble method is indeed a powerful tool for addressing graph/hypergraph coloring problems.
Roughly speaking, the nibble method is a probabilistic approach for constructing a combinatorial substructure, such as a proper coloring, by building it iteratively through random selection. This approach typically involves two phases:
\begin{itemize}
    \item[I.] \textbf{Iterative coloring phase}: Construct a `good' partial coloring by iteratively coloring a small portion of vertices in each round using random selection. This iterative step is referred to as a \textit{nibble}.
    \item[II.] \textbf{Finishing phase}: Extend the partial coloring from Phase I to a complete coloring.
\end{itemize}
A common challenge with this approach is to design a `smart' nibble strategy that ensures, by the end of Phase I, the remaining uncolored graph is sufficiently \textit{sparse} while each of its vertices still has many \textit{usable} colors. This enables the finishing phase to be `easily' handled, for example, using the greedy algorithm or the Lov{\'a}sz Local Lemma.


To better illustrate the challenges of hypergraph coloring problem and the novelty of our work, we begin by discussing the \textit{graph} case.  
To apply the nibble method, in each nibble (i.e., an iterative coloring step), every uncolored vertex maintains a \textit{palette} that records all the \textit{usable} colors—namely, colors that do not and potentially will not violate the coloring rules.
Clearly, once a vertex $u$ is colored, this color will no longer be \textit{usable} for all of $u$’s neighbors (to achieve a proper coloring) and will subsequently be removed from their palettes. 
We define the \textit{$c$-degree} of a vertex $u$ as the number of its neighbors whose palettes contain the color $c$, which will be the most critical graph parameter that needs to be tracked throughout the random process.
In Kim’s algorithm~\cite{kim1995brooks} for coloring girth-5 graphs, $c$-degrees can be effectively bounded after each nibble using standard concentration inequalities, due to a special \textit{independence} of girth-5 graphs: whether a color $c$ remains in the palette of one neighbor of a vertex 
$u$ has \textit{negligible} impact on the palettes of other neighbors of $u$.  
However, even for triangle-free graphs, this independence cannot be guaranteed, which makes the coloring problem more challenging. 
In fact, it was noted by Jamall in~\cite{jamall2011brooks} that when the graph has 4-cycles, $c$-degrees are not necessarily concentrated around its expectation with Kim's algorithm.

In the literature, in efforts to resolve the problem of coloring triangle-free graphs, there are essentially four different approaches:
\begin{itemize}
\item \textbf{Approach 1}: Proposed by Johansson~\cite{johansson1996asymptotic}, this approach involves modifying the nibble strategy to control the entropy of the remaining palettes, ensuring that every color in the palette is chosen nearly uniformly at each iteration. This uniformity has proven to be useful and crucial in bounding $c$-degrees.
\item \textbf{Approach 2}: Jamall~\cite{jamall2011brooks} claimed that, although each $c$-degree does not concentrate, the average $c$-degree (over all $c$ in
the palette) does concentrate. Using this insight, Jamall~\cite{jamall2011brooks} provided an alternative proof to Johansson's result (i.e., \eqref{johansson}) with a constant of 67. Pettie and Su~\cite{pettie2015distributed} further developed this approach and improved Johansson's constant from 9 to 4.


\item \textbf{Approach 3}: Despite differences in concentration details, both approaches above rely on the Lov{\'a}sz Local Lemma in their analysis of nibbles, where there is some `slackness' in its application.
Molloy~\cite{molloy2019list} and later Bernshteyn~\cite{bernshteyn2019johansson} provided dramatically simpler proofs by employing the entropy compression method and the lopsided Lov{\'a}sz Local Lemma, respectively, to replace the iterated applications of the classical Lov{\'a}sz Local Lemma. This approach allows them to improve the constant to 1.

\item \textbf{Approach 4}: Very recently, Bernshteyn, Brazelton, Cao, and Kang~\cite{bernshteyn2023counting} and, independently, Hurley and Pirot~\cite{hurley2021first}, and Martinsson~\cite{martinsson2021simplified} obtained other alternative proofs of Molloy’s bound using a recent technique developed by Rosenfeld~\cite{rosenfeld2020another}, in place of the entropy compression method.
\end{itemize}

Going back to hypergraphs, all prior work (\cite{cooper2015list, frieze2008chromatic, frieze2013coloring}) essentially employed Approach 1, i.e., Johansson’s entropy approach. 
In particular, Cooper and Mubayi~\cite{cooper2015list} extended the entropy approach to all rank $3$ hypergraphs.
However, their proof does not readily generalize to higher ranks as far as we can see, due to the increasing complexity of concentration analysis that accumulates in higher-rank hypergraphs. 
Meanwhile, it remains unclear how Approach 3 could be adapted to hypergraphs. 
As for Approach 4, Wanless and Wood~\cite{wanless2022general} provide a general framework for hypergraph coloring using Rosenfeld's technique; however, it is unclear how to embed this framework into the context of triangle-free or sparse hypergraphs.

In contrast to the aforementioned studies on hypergraphs, our work is based on Approach 2, following the ideas of Jamall and Pettie-Su, which focus on concentrating the average of $c$-degrees. Not surprisingly, when extending this approach to hypergraphs, several new challenges arise.
\begin{itemize}
\item[(i)] {\bf Tracking auxiliary hypergraphs with various uniformity.} 
The first challenge is that with higher ranks, the coloring algorithm necessarily becomes much more complicated. For example, when a vertex $u$ is colored with some color $c$, it is no longer practical to immediately remove $c$ from the palettes of its neighbors, as this could result in the loss of too many usable colors.
Instead, we need to be more cautious: for each hyperedge $e$ containing $u$ that might receive the same color $c$, we replace this edge with a new edge 
$e' =e-\{u\}$, which now has lower uniformity. We then update the coloring rule to state that vertices in 
$e'$ cannot all receive color $c$. By doing this, we keep track of potential usable colors without eliminating them prematurely. If a coloring is `good' under the new rule, it will also be `good' under the original rule.

To facilitate this, we introduce a collection of \textit{auxiliary hypergraphs} $\Hc_{c, \ell}$ at each stage of the algorithm that tracks the coloring rule associated with edges of size $\ell$ and color $c$. 
We note that essentially this means we start and maintain a hypergraph whose edges have colors (with the possibility of multiple instances of the same edge in different colors), and hence we in fact have proved the \emph{color-degree} version of Theorem~\ref{mainthm1} though we omit its statement. 
For examples of the color-degree setting, we refer the interested readers to recent works such as Alon-Assadi~\cite{alon2020palette}, Cambie–Kang~\cite{cambie2022independent}, Kang–Kelly~\cite{kang2022colorings}, Glock–Sudakov~\cite{glock2022average}, and Anderson–Bernshteyn–Dhawan~\cite{anderson2025coloring}.
This also extends to the more general form, where edges only have certain colors forbidden from being monochromatic.

\item[(ii)] {\bf Defining $c$-degrees in hypergraphs.}
Another fundamental issue is how to generalize the notion of \textit{$c$-degree} to hypergraphs given edges of various uniformity. 
A straightforward approach would be to define $c$-degree for each uniformity individually: for example, we could define the \textit{$(c, \ell)$-degree} of $u$ as its $\ell$-degree in $\Hc_{c, \ell}$.   
However, unlike in graphs, these $(c, \ell)$-degrees may not necessarily decrease during an iteration. This is because, as mentioned above, new edges of lower uniformity are introduced in addition to the removal of old edges.

One of the main contributions of our paper is the introduction of the \textit{weighted sum of $(c, \ell)$-degrees} (see \eqref{def:cdegree} for the exact formula).
This new definition serves as the role of `$c$-degree' in hypergraphs and lays the foundation for all subsequent analyses.
The weights are carefully chosen to balance the contributions from each uniformity, ensuring that the average of these $c$-degrees over colors is \textit{monotone decreasing} in expectation and well-concentrated.
We specifically remark that this weighted sum is a \emph{linear} combination of the $(c, \ell)$-degrees, rather than a sum of polynomial roots (as appeared in the statement of Theorem~\ref{mainthm1}) which might seem more natural.
This linearity is crucial not only for determining the expectation but also for proving the concentration of the new weighted sum in the next iteration of the algorithm. 

\item[(iii)] {\bf Manually reducing codegrees.}
The structural intricacy of hypergraphs also introduces more dependencies among trials and variables compared to the graph case, making concentration analysis more challenging.
We will see later in the algorithm (see Section~\ref{sec:coloringAlgDetail}) that the $c$-degrees of a vertex are determined by the colorings of its neighbors and second neighbors. Higher codegrees in hypergraphs introduce additional dependencies among variables, which may prevent the average $c$-degrees from being well-concentrated.

Unlike Cooper and Mubayi~\cite{cooper2015list} who tracked all codegrees of hypergraphs, another novelty of our work is the development of a \textit{codegree reduction algorithm} (see Section~\ref{sec:coredu} for details), which directly reduces all codegrees by contracting multiple hyperedges into one hyperedge of smaller uniformity.
This reduction process obviates the need to track codegrees---which becomes increasingly sophisticated with higher rank---while preserving all \textit{essential} coloring information and properties.

\item[(iv)] {\bf Using a new concentration technique.} Addressing technical difficulties in concentration analysis is a core component of our work. To this end, instead of relying on classical concentration tools, we use a linear version of Talagrand's concentration inequality with exceptional events developed in Delcourt and Postle~\cite{DP} (Theorem~\ref{talagrand}). This new version crucially provides a linear (as opposed to quadratic) dependence on the so-called Lipschitz constant under certain assumptions, allowing us to establish concentration for variables with larger Lipschitz constants, in contrast to classical Talagrand's concentration inequality.
Moreover, our application of this technique is both intricate and innovative, including breaking target random variables into several variables for which Theorem~\ref{talagrand} are applicable, and constructing the set of exceptional events through iterative applications of Theorem~\ref{talagrand}. 

\item[(v)] {\bf Establishing `almost independence'.}
Similar to the graph case, the triangle-free condition alone does not guarantee the independence of random coloring events, which is important for bounding $c$-degrees in expectation. However, by using Janson's Inequality (Theorem~\ref{Janson}), we demonstrate that triangle-freeness is sufficient to ensure a certain degree of `almost independence' (see Lemma~\ref{lem:dep}), which is enough for our purpose. 

We also point out that while the triangle-freeness is crucial for reducing the chromatic number of hypergraphs, the vast majority of our proof does not rely on this condition.
The only place we use the triangle-freeness is in Section~\ref{sec:dep}, where it establishes the `almost independence' during the algorithm. In other words, the triangle-free assumption in Theorem~\ref{mainthm1} could be substituted with any condition that ensures the conclusion of Lemma~\ref{lem:dep}.
\end{itemize}

Altogether, it is the combination of the appropriate definitions (e.g., colored edges, weighted $c$-degrees) and innovative techniques (e.g., codegree reduction, the new version of Talagrand’s inequality) that leads to our proof of Theorem~\ref{mainthm1}.

\medskip
In the next section, we present some related probabilistic tools.
In Section~\ref{sec:algo}, we describe our codegree reduction algorithm and the main coloring algorithm.
Section~\ref{sec:mainthm2} contains an analysis of our coloring algorithm; in particular, we state our Key Lemma (Lemma~\ref{lem:de}) and show how it is used to prove Theorem~\ref{mainthm2}.
Sections~\ref{sec:mainlemma}, ~\ref{sec:expe} and~\ref{sec:concede} are devoted to proving Lemma~\ref{lem:de}.
We then derive Theorem~\ref{mainthm1} from Theorem~\ref{mainthm2} in Section~\ref{sec:mainthm1}, and prove Theorem~\ref{sparsethm} in Section~\ref{sec:spars}.
Finally, we conclude the paper with some open problems in Section~\ref{sec:conclu}.

\section{Probabilistic tools}\label{sec:prob}
\subsection{The Lov{\'a}sz Local Lemma}
\begin{thm}[The Asymmetric Local Lemma~\cite{molloy2002graph}]\label{locallemma}
Consider a set $\mathcal{E}=\{\A_1, \ldots, \A_n\}$ of (typically bad) events such that each $\A_i$ is mutually independent of $\mathcal{E} -(\mathcal{D}_i  \cup \A_i)$, for some $\mathcal{D}_i \subset \mathcal{E}$.
If for each $1 \leq i \leq n$
\begin{itemize}
\item $\Prb(\A_i) \leq 1/4$, and
\item $\sum_{\A_j\in \mathcal{D}_i}\Prb(\A_j) \leq 1/4$,
\end{itemize}
then with positive probability, none of the events in $\mathcal{E}$ occur.
\end{thm}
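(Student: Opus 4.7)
The plan is to prove the Asymmetric Lovász Local Lemma by the classical induction-on-conditioning argument that goes back to Erd\H{o}s and Lov\'asz. The key intermediate claim I would aim to establish is the following: for every event $\mathcal{A}_i$ and every subset $S \subseteq \{1,\dots,n\} \setminus \{i\}$,
\[
\Prb\Bigl[\mathcal{A}_i \,\Big|\, \bigcap_{j \in S} \overline{\mathcal{A}_j}\Bigr] \leq 2\,\Prb[\mathcal{A}_i].
\]
Once this is in hand, a telescoping argument gives
\[
\Prb\Bigl[\bigcap_{i=1}^n \overline{\mathcal{A}_i}\Bigr] = \prod_{i=1}^n \Prb\Bigl[\overline{\mathcal{A}_i} \,\Big|\, \bigcap_{j<i}\overline{\mathcal{A}_j}\Bigr] \geq \prod_{i=1}^n\bigl(1 - 2\Prb[\mathcal{A}_i]\bigr) > 0,
\]
since each factor is at least $1/2$ by the hypothesis $\Prb[\mathcal{A}_i] \leq 1/4$.

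First I would prove the intermediate claim by induction on $|S|$. The base case $|S|=0$ is immediate. For the inductive step, I would split $S$ into $S_1 = S \cap \mathcal{D}_i$ and $S_2 = S \setminus (\mathcal{D}_i \cup \{i\})$, and write
\[
\Prb\Bigl[\mathcal{A}_i \,\Big|\, \bigcap_{j\in S}\overline{\mathcal{A}_j}\Bigr] = \frac{\Prb\bigl[\mathcal{A}_i \cap \bigcap_{j\in S_1}\overline{\mathcal{A}_j} \,\big|\, \bigcap_{j\in S_2}\overline{\mathcal{A}_j}\bigr]}{\Prb\bigl[\bigcap_{j\in S_1}\overline{\mathcal{A}_j} \,\big|\, \bigcap_{j\in S_2}\overline{\mathcal{A}_j}\bigr]}.
\]
The mutual-independence hypothesis bounds the numerator by $\Prb[\mathcal{A}_i]$, since $\mathcal{A}_i$ is independent of any Boolean combination of events from $S_2$. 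For the denominator, I would apply a union bound followed by the induction hypothesis (which applies since each conditioning set involved is strictly smaller than $S$) to obtain
\[
\Prb\Bigl[\bigcap_{j\in S_1}\overline{\mathcal{A}_j} \,\Big|\, \bigcap_{j\in S_2}\overline{\mathcal{A}_j}\Bigr] \geq 1 - \sum_{j\in S_1} 2\,\Prb[\mathcal{A}_j] \geq 1 - 2\cdot \tfrac14 = \tfrac12,
\]
using the second hypothesis $\sum_{\mathcal{A}_j \in \mathcal{D}_i}\Prb[\mathcal{A}_j] \leq 1/4$. Dividing yields the desired bound $2\,\Prb[\mathcal{A}_i]$.

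The main (mild) technical point to be careful about is the correct ordering of the induction so that every inductive invocation is on a strictly smaller conditioning set; since the denominator estimate conditions only on events indexed by $S_2 \subsetneq S$ (and further conditioning inside the union bound reduces the set further by removing the index under consideration), this is clean. No further probabilistic tools are needed, and the argument is entirely self-contained given the mutual-independence assumption in the statement.
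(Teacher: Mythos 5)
The paper itself does not prove this theorem; it simply states it as a cited result from Molloy and Reed's monograph \cite{molloy2002graph}, so there is no in-paper argument to compare against. Your proof is the standard inductive proof of the Lov\'asz Local Lemma in its asymmetric form, and it is essentially correct: the decomposition $S=S_1\cup S_2$ with $S_1$ the ``dependent'' indices, the bound on the numerator via mutual independence of $\A_i$ from the events indexed by $S_2$, and the union-bound-plus-induction lower bound of $1/2$ on the denominator all go through exactly as you describe, and the telescoping product at the end is the usual finish.

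Two small remarks. First, you should verify along the way that every conditioning event $\bigcap_{j\in S_2}\overline{\A_j}$ (and, in the final telescoping product, each $\bigcap_{j<i}\overline{\A_j}$) has positive probability, so that the conditional probabilities are defined; this is handled by carrying along the complementary induction that $\Prb\bigl[\bigcap_{j\in S}\overline{\A_j}\bigr]>0$ for every $S$, which follows directly from the $\geq 1/2$ factor you already establish. Second, your parenthetical that ``further conditioning inside the union bound reduces the set further by removing the index under consideration'' is not quite how it works: for each $j\in S_1$ you invoke the induction hypothesis with conditioning set $S_2$ (the same $S_2$ for all such $j$), and $j\notin S_2$ automatically because $S_1$ and $S_2$ are disjoint; the strict decrease $|S_2|<|S|$ comes from $S_1\neq\emptyset$ (and if $S_1=\emptyset$ the denominator is $1$ and no induction is needed). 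Neither point affects the validity of the argument.
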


\subsection{Concentration inequalities}
\label{sec:coneq}
%
One of the key tools in our proof is a linear version of Talagrand’s concentration inequality (Theorem~\ref{talagrand}) from the recent work of Delcourt and Postle~\cite{DP}. However, we do not require the full strength of their concentration inequality. Instead, we present the following special case (which is more user-friendly), where the target random variable can be interpreted as a sum of $\{0, 1\}$ random variables.
Before proceeding, we first introduce some definitions.

\begin{defi}[$r$-verifiable]\label{def:verfiable}
Let $\{(\Omega_i, \Sigma_i, \mathbb{P}_i)\}^{n}_{i=1}$ be probability spaces, $(\Omega, \Sigma, \mathbb{P})$ be their product space, $\Omega^*\subseteq \Omega$ be a set of \textit{exceptional outcomes}, and $\Y:\Omega \rightarrow \{0, 1\}$ be a $\{0, 1\}$-random variable.
Let $r\geq 0$.
We say $\Y$ is \textit{$r$-verifiable} with verifier $R:\{\omega\in\Omega\setminus\Omega^*: \Y(\omega)=1\}\rightarrow 2^{[n]}$ with respect to $\Omega^*$ if
\begin{itemize}
\item $|R(\omega)|\leq r$ for every $\omega\in\Omega\setminus\Omega^*$ with $\Y(\omega)=1$, and
\item $\Y(\omega')=1$ for all $\omega'=(\omega'_1, \ldots, \omega'_n)\in \Omega\setminus\Omega^*$ such that $\omega_i=\omega'_i$ for each $i\in R(\omega)$.
\end{itemize}
\end{defi}

\begin{defi}[$(r, d)$-observable]\label{def:obser}
Let $\{(\Omega_i, \Sigma_i, \mathbb{P}_i)\}^{n}_{i=1}$ be probability spaces, $(\Omega, \Sigma, \mathbb{P})$ be their product space, and $\Omega^*\subseteq \Omega$ be a set of \textit{exceptional outcomes}.
Let $r, d\geq 0$.
We say a random variable $\X$ in $\Omega$ is \textit{$(r, d)$-observable} with respect to $\Omega^*$ if
\begin{itemize}
\item  \[
\X=\sum_{j=1}^{m}\Y_j,
\]
where for every $j\in[m]$, $\Y_j$ is a $\{0, 1\}$-random variable in $\Omega$ that is $r$-verifiable with verifiers $R_j$, and
\item  for  every $\omega \in \Omega\setminus\Omega^*$ and $i\in [n]$,
\[
|\{j\in [m]:\ i\in R_j(\omega) \ \text{and} \ \Y_j(\omega)=1\}|\leq d.
\]
\end{itemize}
\end{defi}

Now we state their concentration inequality, as follows. 
\begin{thm}[\text{Delcourt-Postle~\cite[Theorem~4.4]{DP}}]
\label{talagrand}
Let $\{(\Omega_i, \Sigma_i, \mathbb{P}_i)\}^{n}_{i=1}$ be probability spaces, $(\Omega, \Sigma, \mathbb{P})$ be their product space, and $\Omega^*\subseteq \Omega$ be a set of exceptional outcomes. 
Let $r, d\geq 0$, and $\X: \Omega \rightarrow \mathbb{R}_{\geq 0}$ be a non-negative random variable.
If $\X$ is $(r, d)$-observable with respect to $\Omega^*$, then for any $\tau>96\sqrt{rd\E[\X]} + 128rd + 8\Prb[\Omega^*](\sup \X)$,
\[
\Prb(|\X - \E[\X]|>\tau) \leq 4\exp\left(-\frac{\tau^2}{8rd(4\E[\X] + \tau)}\right) + 4\Prb(\Omega^*).
\]
\end{thm}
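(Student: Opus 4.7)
The plan is to adapt the classical Talagrand convex-distance framework, and exploit the $(r,d)$-observability hypothesis to obtain a linear rather than quadratic dependence on the ``Lipschitz constant'' in the exponent. First I would restrict attention to $\Omega\setminus\Omega^*$ throughout; the additive $4\Prb(\Omega^*)$ in the conclusion and the $8\Prb(\Omega^*)(\sup\X)$ term in the lower bound on $\tau$ are precisely what is needed to absorb this conditioning together with the possible contribution of $\Omega^*$ to $\E[\X]$.

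The heart of the proof is a lower bound on the Talagrand convex distance $d_T(\omega,A)$ for $A=\{\X<s-u\}$. Given $\omega\in\Omega\setminus\Omega^*$ with $\X(\omega)\ge s$, I pick a set $J$ of exactly $s$ indices with $\Y_j(\omega)=1$ and form the weighted certificate
\[
w_i=|\{j\in J:\ i\in R_j(\omega)\}|.
\]
The $(r,d)$-observability condition yields the pointwise bound $w_i\le d$, and the $r$-verifiability of each $\Y_j$ yields the total-mass bound $\sum_i w_i\le r|J|=rs$; together these imply $\|w\|_2^2\le d\sum_i w_i\le rds$. For any $\omega'\in A\cap(\Omega\setminus\Omega^*)$, setting $I=\{i:\omega_i\ne\omega'_i\}$, the verifier property forces $R_j(\omega)\cap I\ne\emptyset$ for every $j\in J$ with $\Y_j(\omega')=0$, so double counting gives
\[
\sum_{i\in I}w_i=\sum_{j\in J}|R_j(\omega)\cap I|\ge |\{j\in J:\Y_j(\omega')=0\}|\ge s-\X(\omega')>u.
\]
Plugging the unit vector $w/\|w\|_2$ into the supremum defining $d_T$ therefore yields $d_T(\omega,A)\ge u/\sqrt{rds}$, which is the key place where the observability condition turns the usual $rd^2$ into $rd$.

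Next, the classical Talagrand convex-distance inequality $\Prb(A)\cdot\Prb(\{\omega:d_T(\omega,A)\ge t\})\le e^{-t^2/4}$ applied with $t=u/\sqrt{rds}$ produces the master bound
\[
\Prb(\X<s-u)\cdot\Prb(\X\ge s)\le e^{-u^2/(4rds)}.
\]
Standard choices of $s$ and $u$ around the median $m=\mathrm{med}(\X)$ (e.g.\ $s=m$, $u=\tau$ for the lower tail, and $s=m+\tau$, $u=\tau$ for the upper tail) together with the median estimates $\Prb(\X\le m)\ge 1/2$ and $\Prb(\X\ge m)\ge 1/2$ yield concentration of the form $\Prb(|\X-m|>\tau)\le 4e^{-\tau^2/(8rd(m+\tau))}$ on $\Omega\setminus\Omega^*$. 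Shifting from median to mean costs an error of order $\sqrt{rd\,\E[\X]}$, plus $\Prb(\Omega^*)\sup\X$ from the exceptional contribution to $\E[\X]$; this is exactly what the $96\sqrt{rd\E[\X]}+128rd+8\Prb(\Omega^*)\sup\X$ lower bound on $\tau$ is engineered to absorb, while the extra $4\Prb(\Omega^*)$ on the right-hand side accounts for the residual probability of $\Omega^*$ on both tails.

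The main obstacle I anticipate is the bookkeeping in the third paragraph: checking that the median-to-mean conversion, the two-sided union bound, and the error contributions from $\Omega^*$ all combine to yield precisely the constants $96$, $128$, $8$ and the denominator $8rd(4\E[\X]+\tau)$ stated in the theorem, together with handling integer rounding when $s$ is not an integer (likely the source of the absolute $128rd$ correction). The mathematical heart, however, lies entirely in the weighted-certificate construction of paragraph two; everything else is a careful reorganization of familiar Talagrand estimates.
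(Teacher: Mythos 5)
The paper does not actually prove this statement: it is imported verbatim from Delcourt--Postle~\cite{DP} and used as a black box, so there is no ``paper's own proof'' to compare against. Evaluated on its own terms, your sketch does reconstruct what is almost certainly the key mechanism behind this family of Talagrand inequalities with exceptional outcomes. The weighted-certificate step is exactly what converts the classical quadratic dependence on the Lipschitz constant into a linear one: taking $w_i = |\{j\in J : i\in R_j(\omega)\}|$, the observability hypothesis gives the pointwise bound $w_i\le d$, while the $r$-verifiability gives $\sum_i w_i\le r|J|$, and the combination $\|w\|_2^2\le d\sum_i w_i\le rds$ is precisely where $rd$ replaces the usual $rc^2$. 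The double-count $\sum_{i\in I}w_i\ge |\{j\in J : \Y_j(\omega')=0\}| > u$ for $\omega'\in A\setminus\Omega^*$ differing from $\omega$ on the index set $I$ is correct and uses the verifier property in exactly the way intended.

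Two points deserve care. First, the convex-distance lower bound you derive is really a bound on $d_T(\omega, A\setminus\Omega^*)$, not on $d_T(\omega,A)$, since the verifier argument only applies to $\omega'\notin\Omega^*$; the Talagrand inequality must therefore be invoked for the restricted set $A\setminus\Omega^*$, and $\Prb(\Omega^*)$ must be added back on both the lower-tail event (when bounding $\Prb(A)$) and the upper-tail event (when bounding $\Prb(\X\ge s)$). You gesture at this, but it is worth writing out: that bookkeeping is where the precise form $+\,4\Prb(\Omega^*)$ emerges. Second, the role of the absolute $128rd$ term is less ``integer rounding of $s$'' than ensuring $\tau$ dominates the median-to-mean shift even in the regime $\E[\X]=O(rd)$, where $\sqrt{rd\,\E[\X]}$ itself is $O(rd)$; together with the factor $\Prb(\Omega^*)\sup\X$ absorbing the discrepancy between $\E[\X]$ and its conditional value on $\Omega\setminus\Omega^*$, this is what makes the constant $96\sqrt{rd\E[\X]}+128rd+8\Prb(\Omega^*)\sup\X$ achievable. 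None of this is a conceptual gap; it is the careful accounting you already flag as the remaining work.
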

We also need the following classical Chernoff bound (see~\cite{alon2016probabilistic}).
\begin{lemma}[Chernoff bound]\label{chernoff}
Let $\X_1, \ldots, \X_n$ be independent $\{0, 1\}$-random variables such that $\Prb(\X_i=1)=p$.
Let $\X=\sum_i\X_i$.
Then
\begin{itemize}
\item Upper tail: 
$\Prb(\X \geq (1  +\delta)\E[\X]) \leq \exp\left(-\delta^2\E[\X]/(2 + \delta)\right)$ for all $\delta >0$;
\item Lower tail: 
$\Prb(\X \leq (1  -\delta)\E[\X]) \leq \exp\left(-\delta^2\E[\X]/2\right)$ for all $0<\delta <1$;
\end{itemize}
\end{lemma}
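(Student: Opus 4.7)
The plan is to prove Lemma~\ref{chernoff} by the standard exponential Markov method (``Chernoff's trick''). Since the $\X_i$ are independent $\{0,1\}$-variables with $\Prb(\X_i=1)=p$, the moment generating function factors: for any $t\in\mathbb{R}$,
\[
\E\bigl[e^{t\X}\bigr] \;=\; \prod_{i=1}^{n}\E\bigl[e^{t\X_i}\bigr] \;=\; \bigl(1-p+pe^{t}\bigr)^{n} \;\leq\; \exp\!\bigl(np(e^{t}-1)\bigr),
\]
using the elementary inequality $1+x\leq e^{x}$. Writing $\mu=\E[\X]=np$, Markov's inequality then gives, for every $t>0$,
\[
\Prb\!\bigl(\X\geq (1+\delta)\mu\bigr) \;=\; \Prb\!\bigl(e^{t\X}\geq e^{t(1+\delta)\mu}\bigr) \;\leq\; e^{-t(1+\delta)\mu}\,\E\bigl[e^{t\X}\bigr] \;\leq\; \exp\!\bigl(\mu(e^{t}-1) - t(1+\delta)\mu\bigr).
\]

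For the upper tail I would optimize the right-hand side by setting the derivative in $t$ to zero, which yields the choice $t=\log(1+\delta)>0$. Substituting gives the textbook Chernoff bound
\[
\Prb\!\bigl(\X\geq (1+\delta)\mu\bigr) \;\leq\; \left(\frac{e^{\delta}}{(1+\delta)^{1+\delta}}\right)^{\!\mu}.
\]
The cleaner form in the statement then follows from the analytic inequality
\[
\frac{e^{\delta}}{(1+\delta)^{1+\delta}} \;\leq\; \exp\!\left(-\frac{\delta^{2}}{2+\delta}\right) \qquad (\delta>0),
\]
which one verifies by taking logarithms: it is equivalent to $\delta-(1+\delta)\log(1+\delta)\leq -\delta^{2}/(2+\delta)$, and the function $g(\delta):=\delta-(1+\delta)\log(1+\delta)+\delta^{2}/(2+\delta)$ vanishes at $\delta=0$ and has $g'(\delta)\leq 0$ on $(0,\infty)$ after a short calculus check.

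For the lower tail I would run the same argument with $t<0$: Markov on $e^{t\X}$ with $t=\log(1-\delta)<0$ (for $0<\delta<1$) and the same factorization yield
\[
\Prb\!\bigl(\X\leq (1-\delta)\mu\bigr) \;\leq\; \left(\frac{e^{-\delta}}{(1-\delta)^{1-\delta}}\right)^{\!\mu},
\]
and then the analogous analytic inequality $e^{-\delta}/(1-\delta)^{1-\delta}\leq \exp(-\delta^{2}/2)$, again provable by comparing derivatives at $\delta=0$, yields the claimed form.

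The routine part is the MGF computation and the application of Markov, which is essentially one line each; the only mildly technical step is the last analytic inequality used to convert the optimized Chernoff bound into the stated exponential form. This is where I would expect the bulk of care in a self-contained write-up, though since the lemma is standard and is merely cited here from~\cite{alon2016probabilistic}, one can alternatively just quote it.
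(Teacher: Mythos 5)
Your proof is correct and uses the standard exponential moment (Markov with $e^{tX}$) argument. As you already note, the paper itself gives no proof of Lemma~\ref{chernoff}; it simply quotes this classical inequality from~\cite{alon2016probabilistic}, so there is no ``paper approach'' to compare against. One small point worth flagging for a self-contained write-up: your claim that the final analytic inequality $\delta-(1+\delta)\log(1+\delta)\leq -\delta^2/(2+\delta)$ follows because $g'(\delta)\leq 0$ is not immediate, since in fact $g'(0)=0$ and $g''(0)=0$ as well; one needs to verify $g''(\delta)\leq 0$ on $(0,\infty)$ (equivalently $8(1+\delta)\leq(2+\delta)^3$, which holds with equality at $\delta=0$ and strict inequality thereafter by comparing derivatives), and then integrate twice. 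The lower-tail inequality is cleaner: there $h''(\delta)=-\delta/(1-\delta)<0$ immediately.
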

Lastly, we present a simple but useful proposition about conditional probability.
\begin{prop}\label{lem:prb}
For any two events $\A$ and $\B$,
\[
\Prb(\A)\leq \Prb(\A\mid \B) + \Prb(\overline{\B}).
\]
\end{prop}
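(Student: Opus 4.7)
The plan is to decompose the event $\A$ according to whether $\B$ occurs, and then bound each of the two resulting pieces by one of the two terms on the right-hand side. First I would write $\A$ as the disjoint union $(\A\cap\B)\cup(\A\cap\overline{\B})$, which gives
\[
\Prb(\A)=\Prb(\A\cap\B)+\Prb(\A\cap\overline{\B}).
\]
With this identity in hand, the inequality reduces to bounding the first summand by $\Prb(\A\mid\B)$ and the second by $\Prb(\overline{\B})$.

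For the first summand, the definition of conditional probability gives $\Prb(\A\cap\B)=\Prb(\A\mid\B)\cdot\Prb(\B)$, and since $\Prb(\B)\leq 1$, this is at most $\Prb(\A\mid\B)$. For the second summand, the inclusion $\A\cap\overline{\B}\subseteq\overline{\B}$ and the monotonicity of probability yield $\Prb(\A\cap\overline{\B})\leq \Prb(\overline{\B})$. Adding the two bounds delivers the desired inequality.

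There is essentially no obstacle; the only point requiring mild care is the degenerate case $\Prb(\B)=0$, where $\Prb(\A\mid \B)$ is not defined in the usual way. In that case $\Prb(\overline{\B})=1$, so the inequality holds trivially under any convention that places $\Prb(\A\mid\B)\in[0,1]$, and it is also consistent with interpreting the product $\Prb(\A\mid\B)\cdot\Prb(\B)$ as $0$ in the identity $\Prb(\A\cap\B)=\Prb(\A\mid\B)\cdot\Prb(\B)$. Thus the proposition holds for all pairs of events $\A$ and $\B$ without further assumptions.
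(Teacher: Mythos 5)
Your proof is correct and follows essentially the same route as the paper: both decompose $\Prb(\A)$ according to whether $\B$ or $\overline{\B}$ occurs (the paper writes this directly as the law of total probability, you arrive at the same identity via the disjoint union $\A=(\A\cap\B)\cup(\A\cap\overline{\B})$) and then bound the two terms by $\Prb(\B)\le 1$ and $\Prb(\A\mid\overline{\B})\le 1$ respectively. Your added remark on the degenerate case $\Prb(\B)=0$ is a reasonable bit of extra care that the paper omits.
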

\begin{proof}
\[
\Prb(\A)= \Prb(\A\mid \B)\cdot\Prb(\B)+ \Prb(\A\mid \overline{\B})\cdot\Prb(\overline{\B})
\leq \Prb(\A\mid \B) + \Prb(\overline{\B}).
\]
\end{proof}

\subsection{Janson's Inequality}
Let $\Omega$ be a finite universal set and let $\mathbf{R}$ be a random subset of $\Omega$ obtained by choosing each element $v\in\Omega$ independently with
\[
\Prb (v\in \mathbf{R}) = p_v.
\]
Let $\{A_i\}_{i\in I}$ be subsets of $\Omega$, where $I$ is a finite index set.
Let $\A_{i}$ be the event $A_i\subseteq \mathbf{R}$. 
(That is, each point $v\in\Omega$ ``flip a coin" to determine if it is in $\mathbf{R}$, and $\A_i$ is the event that the coins for all $v\in A_i$ came up ``heads".)

For $i, j\in I$ we write $i \sim j$ if $i\neq j$ and $A_i\cap A_j\neq \emptyset$.
Note that when $i\neq j$ and not $i\sim j$, then $\A_i$, $\A_j$ are independent events. We define
\begin{equation}\label{Janson:delta}
\Delta^*:=\sum_{i \sim j}\Prb(\A_i\wedge\A_j),
\end{equation}
where the sum is over ordered pairs $(i, j)$.
We set
\begin{equation}\label{Janson:M}
M:=\prod_{i\in I}\Prb(\overline{\A_i}).
\end{equation}
The following result was given by Janson, {\L}uczak and Ruci{\'n}ski~\cite{janson1988exponential}.
\begin{thm}[Janson's Inequality]\label{Janson}
Let $\{\A_i\}_{i\in I}$, $\Delta^*$, $M$ be as above and assume that there is an $\varepsilon>0$ so that $\Prb(\A_i)\leq \varepsilon$ for all $i\in I$. Then
\[
M\leq \Prb\left(\bigwedge_{i\in I}\overline{\A_i}\right) \leq M\exp\left(\frac{1}{1-\varepsilon}\frac{\Delta^*}{2}\right).
\]

\end{thm}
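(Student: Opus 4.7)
The plan is to handle the two inequalities separately. For the lower bound $M \le \Prb(\bigwedge_{i\in I}\overline{\A_i})$, I would invoke the Harris/FKG correlation inequality on the product probability space induced by the independent coin flips $\{v \in \mathbf{R}\}_{v\in\Omega}$. Each event $\overline{\A_i} = \{A_i \not\subseteq \mathbf{R}\}$ is monotone decreasing in the coordinates of $\mathbf{R}$, and a finite intersection of decreasing events is itself decreasing. Hence they are pairwise (in fact jointly) positively correlated, which gives $\Prb(\bigwedge_{i\in I}\overline{\A_i}) \ge \prod_{i\in I}\Prb(\overline{\A_i}) = M$.

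For the upper bound, the plan is to order $I = \{1, 2, \ldots, m\}$ arbitrarily and expand by the chain rule:
\[
\Prb\left(\bigwedge_{i\in I}\overline{\A_i}\right) = \prod_{i=1}^{m} \Prb\left(\overline{\A_i} \,\Big|\, \bigwedge_{j<i}\overline{\A_j}\right).
\]
For each fixed $i$, split the earlier indices as $\{j<i\} = D_i \sqcup E_i$ where $D_i := \{j < i : j \sim i\}$ and $E_i := \{j < i : j \not\sim i\}$, and set $\mathcal{E}_D := \bigwedge_{j\in D_i}\overline{\A_j}$ and $\mathcal{E}_E := \bigwedge_{j\in E_i}\overline{\A_j}$. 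The crucial structural observation is that for $j\in E_i$ the set $A_j$ is disjoint from $A_i$, so $\A_i$ depends only on coin flips outside the support of $\mathcal{E}_E$ and hence is independent of $\mathcal{E}_E$.

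The heart of the argument is to show
\[
\Prb\left(\A_i \,\Big|\, \mathcal{E}_D \wedge \mathcal{E}_E\right) \;\geq\; \Prb(\A_i) \;-\; \sum_{j\in D_i}\Prb(\A_i \wedge \A_j).
\]
To prove this, I would write the conditional probability as $\Prb(\A_i \wedge \mathcal{E}_D \wedge \mathcal{E}_E)/\Prb(\mathcal{E}_D \wedge \mathcal{E}_E)$, bound the denominator from above by $\Prb(\mathcal{E}_E)$, and bound the numerator from below by $\Prb(\A_i \wedge \mathcal{E}_E) - \sum_{j\in D_i}\Prb(\A_i \wedge \A_j \wedge \mathcal{E}_E)$ via a union bound on $\neg \mathcal{E}_D$. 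Since $\A_i$ is independent of $\mathcal{E}_E$, the first term equals $\Prb(\A_i)\Prb(\mathcal{E}_E)$; since $\A_i \wedge \A_j$ is increasing and $\mathcal{E}_E$ decreasing, FKG gives $\Prb(\A_i \wedge \A_j \wedge \mathcal{E}_E) \le \Prb(\A_i \wedge \A_j)\Prb(\mathcal{E}_E)$. The factor $\Prb(\mathcal{E}_E)$ cancels, yielding the desired inequality. This is the main obstacle, because it requires simultaneously exploiting exact independence (for the $E_i$ part) and FKG negative/positive correlation (for the cross terms with $D_i$).

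The conclusion then follows by routine algebra: rewriting gives $\Prb(\overline{\A_i} \mid \bigwedge_{j<i}\overline{\A_j}) \le \Prb(\overline{\A_i}) + \sum_{j\in D_i}\Prb(\A_i\wedge \A_j)$, and dividing through by $\Prb(\overline{\A_i}) \ge 1-\varepsilon$ yields
\[
\frac{\Prb(\overline{\A_i}\mid \bigwedge_{j<i}\overline{\A_j})}{\Prb(\overline{\A_i})} \;\le\; 1 + \frac{1}{1-\varepsilon}\sum_{j\in D_i}\Prb(\A_i\wedge \A_j).
\]
Applying $1+x \le e^x$ and taking the product over $i\in I$ converts the double sum into $\sum_{i}\sum_{j\in D_i}\Prb(\A_i\wedge \A_j) = \Delta^*/2$, since each ordered pair $i\sim j$ contributes exactly once when we restrict to $j<i$. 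This gives the desired bound $\Prb(\bigwedge_{i\in I}\overline{\A_i}) \le M\exp\bigl(\tfrac{1}{1-\varepsilon}\tfrac{\Delta^*}{2}\bigr)$.
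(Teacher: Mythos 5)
The paper does not prove this statement at all---Theorem~\ref{Janson} is quoted from Janson, {\L}uczak and Ruci\'nski and used as a black box---so there is no in-paper argument to compare against. Your proposal is the standard Boppana--Spencer proof and it is correct: Harris/FKG gives the lower bound since each $\overline{\A_i}$ is decreasing; the chain-rule factorization with the split of $\{j<i\}$ into neighbors $D_i$ and non-neighbors $E_i$, the exact independence of $\A_i$ from the non-neighbor event, the FKG bound $\Prb(\A_i\wedge\A_j\wedge \bigwedge_{j'\in E_i}\overline{\A_{j'}})\leq \Prb(\A_i\wedge\A_j)\Prb(\bigwedge_{j'\in E_i}\overline{\A_{j'}})$ for the cross terms, and the observation that each ordered adjacent pair is counted exactly once in $\sum_i\sum_{j\in D_i}$ (hence $\Delta^*/2$) all go through. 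The only point worth making explicit is that one may assume $\varepsilon<1$ (otherwise the stated bound is vacuous), whence $M>0$ and, by the already-established lower bound, every conditioning event in the chain rule has positive probability.
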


\subsection{A correlation inequality}
We also use the following correlation inequality from~\cite{alon2016probabilistic}.
Let $p=(p_1, \ldots, p_n)$ be a real vector, where $0\leq p_i \leq 1$. Consider the probability space whose element are all members of the power set $\mathcal{P}(N)$, where, for each $A\subseteq N$, $\Prb(A)=\prod_{i\in A}p_i\prod _{j\notin A}(1-p_j)$.
Clearly this probability distribution is obtained if we choose a random $A\subseteq N$ by choosing each element $i\in N$ independently with probability $p_i$.
For each $\A \subseteq \mathcal{P}(N)$, Let us denote by $\Prb_p[\A]$ the probability that a randomly chosen subset of $N$ lies in $\A$, i.e., $\sum_{A\in\A}\Prb(A)$. 

A family $\A$ of subsets of $N$ is \textit{monotone decreasing} if $A \in \A$ and $A'\subseteq\A$ indicates $A'\in\A$. Similarly, it is \textit{monotone increasing} if $A\in\A$ and $A\subseteq A'$ indicates $A'\in\A$. 
\begin{thm}\cite[Theorem 6.3.2]{alon2016probabilistic}\label{FKG}
Let $\A$ and $\B$ be two monotone increasing families of subsets of $N$ and let $\C$ and $\D$ be two monotone decreasing families of subsets of $N$. Then, for any real vector $p=(p_1, \ldots, p_n)$, $0\leq p_i \leq 1$,
\begin{align*} 
\Prb_p(\A\cap\B) \geq \Prb_p(\A)\cdot\Prb_p(\B), \\ 
\Prb_p(\C\cap\D) \geq \Prb_p(\C)\cdot\Prb_p(\D), \\ 
\Prb_p(\A\cap\C) \leq \Prb_p(\A)\cdot\Prb_p(\C).
\end{align*}
\end{thm}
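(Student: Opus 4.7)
The plan is to reduce all three inequalities to the first one, and then prove the first by induction on $n$. Observe that if $\C$ is monotone decreasing then its complement $\mathcal{P}(N)\setminus\C$ is monotone increasing, and $\Prb_p(\overline{\C}) = 1 - \Prb_p(\C)$. So given the first inequality, for the third I would apply it to $\A$ and $\overline{\C}$ to get
\[
\Prb_p(\A) - \Prb_p(\A\cap\C) = \Prb_p(\A\cap\overline{\C}) \geq \Prb_p(\A)\bigl(1-\Prb_p(\C)\bigr),
\]
which rearranges to $\Prb_p(\A\cap\C)\leq \Prb_p(\A)\cdot\Prb_p(\C)$. A parallel argument applied to $\overline{\C}$ and $\overline{\D}$ (both monotone increasing) yields the second inequality from the first. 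So it suffices to prove $\Prb_p(\A\cap\B)\geq \Prb_p(\A)\cdot\Prb_p(\B)$ for monotone increasing $\A,\B$.

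For the base case $n=1$, a short enumeration of the three possible monotone increasing families in $\mathcal{P}(\{1\})$ (namely $\emptyset$, $\{\{1\}\}$, and $\mathcal{P}(\{1\})$) verifies the inequality directly. For the inductive step, I would condition on whether $n$ lies in the random set $\mathbf{R}$. Set $q := p_n$ and define on the ground set $[n-1]$ the families $\A_0 := \{A'\subseteq[n-1] : A'\in\A\}$ and $\A_1 := \{A'\subseteq[n-1] : A'\cup\{n\}\in\A\}$, and analogously $\B_0, \B_1$. The key structural point is that each of $\A_0, \A_1, \B_0, \B_1$ is monotone increasing on $[n-1]$, and that monotonicity of $\A$ (resp.\ $\B$) on $[n]$ forces $\A_0\subseteq\A_1$ (resp.\ $\B_0\subseteq\B_1$). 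Writing $a_i := \Prb_{p'}(\A_i)$ and $b_i := \Prb_{p'}(\B_i)$ for the product measure $p' = (p_1,\ldots,p_{n-1})$, this gives $a_1\geq a_0$ and $b_1\geq b_0$.

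By the inductive hypothesis applied separately to $(\A_1,\B_1)$ and $(\A_0,\B_0)$ on $[n-1]$, I would obtain $\Prb_{p'}(\A_1\cap\B_1)\geq a_1 b_1$ and $\Prb_{p'}(\A_0\cap\B_0)\geq a_0 b_0$. Unpacking the conditioning,
\[
\Prb_p(\A\cap\B) \geq q\, a_1 b_1 + (1-q)\, a_0 b_0, \qquad \Prb_p(\A)\cdot\Prb_p(\B) = \bigl(q a_1+(1-q)a_0\bigr)\bigl(q b_1+(1-q)b_0\bigr).
\]
The difference simplifies to $q(1-q)(a_1-a_0)(b_1-b_0)$, which is nonnegative since $q\in[0,1]$ and $a_1\geq a_0$, $b_1\geq b_0$. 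This closes the induction.

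The only subtle point — and the step I expect to be the main obstacle to write carefully — is verifying that the decomposition into $\A_0, \A_1$ genuinely transfers monotonicity from $[n]$ down to $[n-1]$, so that the inductive hypothesis applies and the crucial correlation $a_1\geq a_0$ really holds. Once that bookkeeping is in place, the remaining computation is just the two-term Chebyshev rearrangement identity used above, and the complementation step then delivers all three inequalities at once.
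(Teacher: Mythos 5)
Your proof is correct. The complementation step that reduces the second and third inequalities to the first is sound (a monotone decreasing family has monotone increasing complement, and the algebra $\Prb_p(\A\cap\overline{\C})=\Prb_p(\A)-\Prb_p(\A\cap\C)$ works out), and the induction is the standard one: the families $\A_0,\A_1$ obtained by slicing on the last coordinate are indeed monotone increasing on $[n-1]$ with $\A_0\subseteq\A_1$, which gives $a_1\geq a_0$, and the final rearrangement $q(1-q)(a_1-a_0)(b_1-b_0)\geq 0$ closes the argument. One small remark: your first display in the inductive step is actually an equality before invoking the inductive hypothesis, since $\Prb_p(\A\cap\B)=q\,\Prb_{p'}(\A_1\cap\B_1)+(1-q)\,\Prb_{p'}(\A_0\cap\B_0)$ exactly; the inequality only enters when you lower-bound each conditional term. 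As for comparison: the paper gives no proof at all, citing Theorem 6.3.2 of Alon--Spencer, where this statement is obtained as a corollary of the FKG inequality, itself deduced from the Ahlswede--Daykin four functions theorem on distributive lattices. You instead give Harris's direct inductive argument, which is entirely elementary and self-contained; what it gives up is the extra generality of the lattice-theoretic route (log-supermodular measures rather than just product measures), none of which is needed for the application in this paper.
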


\section{Coloring algorithm}\label{sec:algo}
In this section, we introduce our coloring algorithm for the iterative coloring phase of the nibble method, which will be applied to prove Theorem~\ref{mainthm2}.
We begin this section by establishing our basic setup. 
After that, in Section~\ref{sec:alg:warm}, we present a `baby version' of our algorithm, focusing on the key coloring steps while omitting technical details, to help readers gain some understanding of the overall approach.
In Section~\ref{sec:coredu}, we introduce our \textit{codegree reduction algorithm}, which will serve as a subroutine at each
step of our coloring algorithm. 
In Section~\ref{sec:coloringAlgDetail}, we provide a detailed description of our main coloring algorithm. 
Finally in Section~\ref{sec:notation}, we summarize all the notation and parameters used throughout the algorithm and gather some simple facts for later analysis.
\medskip

The input to our coloring algorithm is a rank $k$, triangle-free hypergraph $\Hc$, that satisfies all the degree and codegree assumptions in Theorem~\ref{mainthm2}.
Let $\mathcal{P}=\{P(u)\}_{u\in V(\Hc)}$ be a list assignment of $\Hc$, where each $P(u)$ represents the initial color \textit{palette}, i.e., the set of usable colors for $u$.
Let 
\[
\Cco:=\cde^{-1}(\Delta/\log\Delta)^{1/(k-1)}
\]
be the number of colors in the initial palettes, where $\cde:=1/(60\cdot2^k)$ is a small constant (chosen to meet the requirements of the analysis).
The goal is to show that for any list assignment $\mathcal{P}$ with $|\Pa(u)|= \Cco$ for all vertices $u$, our coloring algorithm always generates a proper partial coloring of $\Hc$, which can then be easily extended to a proper coloring of $\Hc$.

\begin{table}[H]
\centering
\begin{tabular}{lll}
\hline
\hline
Notation & Value & Description \\
\hline
$k$ & $\ge 3$ & rank \\
$\Hc$ &  & rank $k$, triangle-free hypergraph \\
$\Delta_{\ell}(\Hc)$ &  $\leq \Delta^{1 - \frac{k - \ell}{k-1}} (\log \Delta)^{\frac{k - \ell}{ k-1}}$ & maximum $\ell$-degree\\
$\delta_{s, \ell}(\Hc)$ &  $\leq(\Delta/\log\Delta)^{\frac{\ell - s}{k-1}}$ & maximum codegree\\
$\Pa(u)$ &  & Initial color palette of vertex $u$\\
$\Cco$ & $\cde^{-1}(\Delta/\log\Delta)^{1/(k-1)}$ & number of colors in the palette\\
$\cde$  & $1/(60\cdot2^k)$  & constant\\
\hline
\hline
\end{tabular}
\caption{Basic hypergraph parameters}
\label{table:basic}
\end{table}


\subsection{Warm-up}\label{sec:alg:warm}

As mentioned in Section~\ref{sec:overview}, our coloring algorithm is based on the work of Pettie and Su~\cite{pettie2015distributed} for triangle-free graphs, while incorporating several important modifications to generalize the method to hypergraphs. Before we proceed, we first introduce the following definition.

\begin{defi}[Compatible coloring]
Let $V$ be a set, $\{\Pa(u)\}_{u\in V}$ be a collection of color palettes for $V$, and $\{\Fc_{c, \ell}\}_{c, 2\le \ell\le k}$ be a family of hypergraphs defined on $V$, where each $\Fc_{c, \ell}$ is an $\ell$-uniform hypergraph with color index $c$. 

We say a (partial) coloring $\Psi$ on $V$ is \textit{$(\{\Pa(u)\},~\{\Fc_{c, \ell}\})$-compatible}, if for every $u$, $\Psi(u)\in \Pa(u)$, and none of the edges in 
$\Fc_{c, \ell}$ are monochromatic with color $c$ under 
$\Psi$.

\end{defi}

Intuitively, in each iteration, we partially color some vertices to obtain a larger partial coloring $\Psi$. We then update the color palettes $\Pa(u)$ for remaining vertices (say $U$).
Most importantly, we establish and track a collection of restriction hypergraphs $\Hc_{c, \ell}$ with uniformity $\ell$ and color index $c$, which are used to capture all the coloring restrictions needed to achieve a proper coloring of $\Hc$, in the sense that
\[
\text{any extension of $\Psi$ will be a proper coloring of $\Hc$, if it is $\left(\{\Pa(u)\}, \{\Hc_{c,\ell}\}\right)$-compatible.}
\]
Here is a rough outline of one step in our random iterative coloring procedure:
\begin{itemize}
    \item For each uncolored vertex $u$, \textit{activate} each color in its current palette $\Pa(u)$ independently with some small probability $p$. 
        \item For a vertex $u$, a color $c$ will be considered \textit{lost} from its palette $\Pa(u)$, if there is an edge in $\cup_{\ell}\Hc_{c, \ell}$ containing $u$, such that $c$ is activated on all other vertices in the edge.
    \item \textit{Color} vertex $u$ with any color from $\Pa(u)$ that is activated and not lost.
\end{itemize}
Note that the $\left(\{\Pa(u)\}, \{\Hc_{c,\ell}\}\right)$-compatibility guarantees that the updated partial coloring remains proper, as none of the edges can have a color $c$ that is both activated and not lost on all of its vertices.
We then update the palette to include only the remaining colors in 
$\Pa(u)$—those that are not lost—and adjust the restriction hypergraphs as follows:
\[
\begin{split}
\Hc_{\ell, c}\rightarrow &\left\{e\in\Hc_{\ell, c}:\ e\subseteq U,\ c\in \Pa(u) \text{ for all } u\in e\right\} \\
 &+ \left\{S\subseteq \binom{U}{\ell}: S\subsetneq e\in \bigcup_{s> \ell}\Hc_{s, c},\ c\in \bigcap_{u\in S}\Pa(u)\ \& ~\Psi(v)=c \text{ for all } v\in e\setminus S\right\}.
 \end{split}
\]
The motivation for defining $\Hc_{c, \ell}$ as described is to ensure that, if none of the edges in the updated $\Hc_{c,\ell}$ are monochromatic with $c$, then the same holds for the previous $\Hc_{c,\ell}$. This crucial property maintains compatibility and allows us to track all coloring restrictions by focusing solely on the uncolored vertices.

In practice, to better control the algorithm, we need to make several \textit{fine-tunings} to our palettes and restriction hypergraphs, such as reducing codegrees of hypergraphs and filtering more colors than necessary. These details will be described in Section 4.3.

\subsection{Codegree reduction algorithm}\label{sec:coredu}
Let $\Fc$ be a rank $k$ hypergraph and let $f: \mathbb{Z} \times \mathbb{Z} \rightarrow \mathbb{Z}$ be a function. 
The purpose of this algorithm is to generate a new hypergraph $\Fc'$ from $\Fc$ with $V(\Fc')=V(\Fc)$ and smaller codegrees (in terms of $f$), such that any proper coloring of $\Fc'$ is still proper for $\Fc$.
\newline

\noindent\textbf{Codegree reduction algorithm.}
The input to the algorithm is a rank $k$ hypergraph $\Fc$ and a function $f: \mathbb{Z} \times \mathbb{Z} \rightarrow \mathbb{Z}$. We start the algorithm with $\Fc^0:=\Fc$. 
\begin{itemize}
\item[1.] In the $i$-th iteration round, for every vertex $u$ and $2\leq k-i < \ell \leq k$, let 
\[
F_{k-i, \ell}(u):=\left\{ S\subseteq V:\ |S|=k-i,\ u\in S,~\text{and}\  \deg_{\ell}(S, \Fc^{i-1}) \geq f(k-i, \ell)\right\}.
\]
We then define $\Fc^i$ as follows: let $V(\Fc^i) = V(\Fc)$ and 
\[
E(\Fc^i) := E(\Fc^{i-1}) - \bigcup_u\bigcup_{\ell>k-i}\bigcup_{S\in F_{k-i, \ell}(u)}\{e\in \Fc^{i-1}:\  e \supseteq S,\ |e|=\ell \} + \bigcup_u\bigcup_{\ell>k-i}F_{k-i, \ell}(u),
\]
and move to the next iteration. 
\item[2.] We stop the algorithm after $k-2$ steps, and set the output as $\Fc'=\Fc^{k-2}$. 
\end{itemize}

Intuitively, if a set $S$ in $\Fc$ is contained in too many $\ell$-edges, we simply delete all $\ell$-edges containing $S$ and add $S$ itself as a new edge (now with lower uniformity). 
It is important to note that in each iteration, we only `repair' codegrees $\delta_{s, \ell}$ for a given $s$. The ordering in which we run the algorithm (i.e., iterating as $s$ decreases) ensures that the codegrees we have already `repaired' will not increase in subsequent steps.

\begin{defi}[$f$-reduction]
For two hypergraphs $\Fc,\ \Fc'$, and a function $f: \mathbb{Z}\times\mathbb{Z}\rightarrow \mathbb{Z}$, we say $\Fc'$ is an \textit{$f$-reduction} of $\Fc$, if $\Fc'$ is generated from the \textit{codegree reduction algorithm} with the input $\{\Fc, f\}$.
\end{defi}
\begin{prop}\label{prop:redu}
Let $\Fc$ be a rank $k$ hypergraph and let $f: \mathbb{Z}\times\mathbb{Z}\rightarrow \mathbb{Z}$ be a function such that $f(\ell, \ell)=1$ for every $\ell$, and 
\[f(s_1, \ell) < f(s_2, \ell) \quad \text{if $s_1> s_2$}.\]
Then the $f$-reduction $\Fc'$ of $\Fc$ satisfies the following properties:
\begin{itemize}
\item[(1)] For every $2\leq s < \ell \leq k$, 
$\delta_{s, \ell}(\Fc')\leq f(s, \ell);$
\item[(2)] Any proper coloring of $\Fc'$ is also proper for $\Fc$;

\item[(3)] If $\Fc$ is triangle-free, then $\Fc'$ is also triangle-free.
\end{itemize}
\end{prop}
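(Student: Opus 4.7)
For part (1), I would track the rounds and establish that round $i$ (processing $s := k-i$) directly achieves $\delta_{s,\ell}(\Fc^i) \leq f(s,\ell)$ for all $\ell > s$. This follows from the definition of $F_{s,\ell}(u)$: every ``heavy'' size-$s$ set has all its $\ell$-edges removed (giving $\deg_\ell = 0$), while every non-heavy $S'$ satisfies $\deg_\ell(S', \Fc^{i-1}) < f(s,\ell)$ by definition, and this can only decrease as edges are deleted. The key observation is that this bound persists through rounds $i' > i$, because those rounds only add edges of strictly smaller size $k-i' < s$ (irrelevant to $\delta_{s,\ell}$ for $\ell > s$) and only remove existing edges.

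For part (2), I would show that every $e \in \Fc$ has some subset in $\Fc'$. Whenever $e$ is removed in round $i$, it is because $e \supseteq S$ for some $S \in F_{k-i, |e|}(u)$, and $S$ is simultaneously added to $\Fc^i$. If $S$ itself is later removed, a strictly smaller set $S' \subsetneq S$ is added in its place, and so on. Sizes strictly decrease along this chain, and size-$2$ edges are never targeted (the algorithm only processes $\ell > s \geq 2$), so the chain terminates at some $S^{(f)} \in \Fc'$ with $S^{(f)} \subseteq e$. Any proper coloring of $\Fc'$ therefore leaves $S^{(f)}$, hence $e$, non-monochromatic.

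For the moreover clause on triangle-freeness, the naive lift --- replacing each edge of a triangle in $\Fc'$ by an arbitrary original super-edge --- fails, because the condition $\{u,v,w\} \cap e \cap f \cap g = \emptyset$ is not preserved when edges grow. I would instead prove a stronger \emph{vertex-avoiding} lifting claim by induction on the round: \textit{for every $e' \in \Fc^i$ and every $w \notin e'$, there exists an original $E \in \Fc$ with $e' \subseteq E$ and $w \notin E$}. The base case $\Fc^0 = \Fc$ is immediate. In the inductive step, suppose $e' = S$ is added in round $i$ from $F_{s,\ell}(u)$; if the claim failed for some $w \notin S$, then every size-$\ell$ edge of $\Fc^{i-1}$ containing $S$ would also contain $w$, giving $\deg_\ell(S \cup \{w\}, \Fc^{i-1}) \geq f(s,\ell)$. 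But either $\ell = s+1$ (forcing $\deg_\ell \leq 1$ trivially) or $\ell > s+1$, in which case the bound from round $i-1$ yields $\deg_\ell(S \cup \{w\}, \Fc^{i-1}) \leq f(s+1,\ell) < f(s,\ell)$, contradicting the strict monotonicity of $f$ in its first argument. Then I apply the inductive hypothesis to some $E_j \in \Fc^{i-1}$ of size $\ell$ containing $S$ and avoiding $w$.

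With the claim in hand, given a triangle $(e', f', g', u, v, w)$ in $\Fc'$, the triangle condition forces $w \notin e'$, $u \notin f'$, $v \notin g'$. Applying the claim three times produces original edges $E, F, G \in \Fc$ with $e' \subseteq E \not\ni w$, $f' \subseteq F \not\ni u$, $g' \subseteq G \not\ni v$. These edges are pairwise distinct ($u \in e' \subseteq E$ and $u \notin F$ give $E \neq F$; similarly for the other pairs), and the avoidance conditions imply $\{u,v,w\} \cap E \cap F \cap G = \emptyset$, exhibiting a triangle in $\Fc$. I expect the main obstacle to be identifying the correct inductive claim --- the vertex-avoidance version rather than triangle-freeness itself --- and recognising that the strict inequality $f(s+1,\ell) < f(s,\ell)$ is precisely what powers the induction, via the witness $S \cup \{w\}$.
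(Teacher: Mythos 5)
Your proof is correct and rests on the same key observation as the paper's: combining the strict monotonicity $f(s+1,\ell) < f(s,\ell)$ with the already-established codegree bound $\delta_{s+1,\ell}(\Fc^{i-1}) \le f(s+1,\ell)$ to find, among the size-$\ell$ super-edges of a newly created edge $S$, one avoiding a prescribed vertex $w$. The organization differs slightly. The paper's induction maintains the invariant ``$\Fc^i$ is triangle-free'' and, given a bad triangle in $\Fc^i$, lifts an offending new edge back by a single round to produce a triangle in $\Fc^{i-1}$; you instead prove a stronger \emph{vertex-avoiding lifting} lemma --- every $e'\in\Fc^i$ together with any $w\notin e'$ lifts to some $E\in\Fc$ with $e'\subseteq E$ and $w\notin E$ --- which you then invoke three times to transport a triangle in $\Fc'$ all the way back to $\Fc$ in one shot. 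Your formulation is arguably the cleaner one: the paper only treats the case where exactly one edge of the triangle is new and waves off the rest as ``similar arguments which we omit'', whereas your lemma handles any number of new edges without case analysis. Both arguments are sound.
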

\begin{proof}
Property (1) follows directly from the construction of the $f$-reduction.
For property (2), let $\Psi$ be a proper coloring of $\Fc'$, i.e., every edge in $\Fc'$ uses at least two colors under $\Psi$. 
By the construction of $\Fc'$, for every edge $e\in\Fc$, there exists some $e'\in \Fc'$ such that $e'\subseteq e$. This immediately implies that $\Psi$ is also a proper coloring of $\Fc$.

For property (3), suppose that $\Fc$ is triangle-free. Let $\Fc^0, \ldots, \Fc^{k-2}$ be the hypergraphs generated by our reduction algorithm, and recall that $\Fc=\Fc^0$ and $\Fc'=\Fc^{k-2}$. 
To show that $\Fc'$ is triangle-free, we will prove by induction on $i$ that each $\Fc^i$ is triangle-free.

The base case $i=0$ is trivially true as $\Fc^0=\Fc$. 
Assume by induction that $\Fc^{i-1}$ is triangle-free.
Suppose, for the sake of contradiction, that there exists a triangle $\{e, f, g\}$ in $\Fc^{i}$ with vertices $u, v, w$ such that $\{u, v\}\subseteq e$, $\{v, w\}\subseteq f$, $\{w, u\}\subseteq g$ and $\{u, v, w\}\cap e\cap f\cap g=\emptyset$. In particular, we have $w\notin e$.
Triangle-freeness then indicates that a least one of these three edges is not in $\Fc^{i-1}$. To simplify the presentation, we further assume that $e$ is the only edge that is not in $\Fc^{i-1}$; other cases follow by applying the same argument to each missing edge, which we omit here.

Since $e\in\Fc^{i}\setminus \Fc^{i-1}$, we have $|e|=k-i$, and moreover, there exists an integer $\ell > k-i$ such that there are at least $f(k-i, \ell)$ edges in $\Fc^{i-1}$ containing $e$. 
On the other hand, the construction of $\Fc^{i-1}$ ensures that $\deg_{\ell}(e\cup w, \Fc^{i-1}) \le \delta_{k-i+1, \ell}(\Fc^{i-1}) \leq f(k-i+1, \ell)$, which is strictly less than $f(k-i, \ell)$ by the definition of $f$.
Therefore, there must be at least one edge in $\Fc^{i-1}$, say $e'$, such that $e'\supseteq e$ and $w\notin e'$. Then $\{e', f, g\}$ forms a triangle in $\Fc^{i-1}$, which contradicts our induction assumption. 
\end{proof}

\subsection{Coloring algorithm}\label{sec:coloringAlgDetail}
We now describe our main coloring algorithm as follows.

\subsubsection{Initial set-up of the algorithm.} \label{sec:alg:initial}
Denote by $U_0$ the set of \textit{uncolored vertices}, which initially equals $V(\Hc)$, and let $\Psi_0$ be a partial coloring of $\Hc$ with support set $V(\Hc)\setminus U_0$ (which is currently empty and thus vacuously proper).
For every vertex $u\in U_0$, let $\Pa_{0}(u):=\Pa(u)$, the \textit{color palette} of $u$ at the start of the algorithm. Our first crucial parameter is set as
\[
\pa_0:=\Cco,\]
which intuitively refers to the \textit{ideal size of the palettes}. In particular, for all $u\in U_0$,
\[
|P_0(u)| =C \ge (1 - o(1))p_0.
\] 
For any color $c$ and $2\leq \ell \leq k-1$, let 
\[
\Hc^0_{\ell, c}:=\{e\in\Hc :\ |e|=\ell, \text{ and } c\in \Pa_{0}(u) \text{ for all } u\in e\}.
\] 
As mentioned in Section~\ref{sec:alg:warm}, the hypergraph $\Hc^0_{\ell, c}$ intuitively captures all the coloring restrictions of size $\ell$ associated with the color $c$, such that \[
\text{any $\left(\{\Pa_0(u)\}, \{\Hc^0_{c, \ell}\}\right)$-compatible coloring on $V(\Hc)$ is a proper coloring of $\Hc$.}\]

For a vertex $u\in U_0$ and a color $c$, the \textit{$c$-degree of $u$} (at the start of the algorithm) is defined as
\[
d_0(u, c):=\sum_{\ell=2}^k(\cde \pa_{0})^{k-\ell} d^{0}_{\ell}(u, c),
\]
where $d^0_{\ell}(u, c):=\deg(u,\Hc^0_{\ell, c})$.
We then set the second crucial parameter \[t_0:=(k-1)\Delta,
\]
which intuitively refers to the \textit{ideal bound of $c$-degrees}.
Note that, by the choice of parameters (see Table~\ref{table:basic}), for all $u\in U_0$ and $c\in \Pa_0(u)$,
\begin{equation}\label{cde:zero}
d_0(u, c)\leq \sum_{\ell=2}^k(\cde \Cco)^{k-\ell}\Delta_{\ell}(\Hc)
\leq \sum_{\ell=2}^k(\cde \Cco)^{k-\ell}\Delta^{1 - \frac{k - \ell}{k-1}} (\log \Delta)^{\frac{k - \ell}{ k-1}}=\sum_{\ell=2}^k \Delta =t_0 \le 2t_0,
\end{equation}
and
\[
\delta_{s, \ell}(\Hc^{0}_{\ell, c}) \leq \delta_{s, \ell}(\Hc) \leq \left(\frac{\Delta}{\log\Delta}\right)^{\frac{\ell-s}{k-1}}=(\cde\pa_0)^{\ell-s}.
\]

$\{U_0, \Psi_0, \{\Pa_{0}(u)\}, p_0, \{\Hc^0_{\ell,c}\}, \{d_0(u, c)\}, t_0\}$ are the key parameters that we will track and update as the algorithm progresses. In particular, the ratio of the ideal palette size to the ideal $c$-degree bound (see the exact definition in~\eqref{def:degPalRadio}) controls the algorithm's progress. 
Our ultimate goal is to generate a proper partial coloring of $\Hc$ while decreasing this ratio throughout the algorithm, as a small ratio is key to carrying out the finishing phase of the nibble method.

\subsubsection{Iteration of the algorithm.}\label{sec:alg:itera}
At the beginning of the $\rd$-th iteration round, we are given: 
\begin{itemize}
\item[(1)] \textbf{Uncolored vertices}: $U_{\rd-1}$,
\item[(2)] \textbf{Proper partial coloring}: $\Psi_{i-1}$ with support set $V(\Hc)\setminus U_{\rd-1}$,
\item[(3)] \textbf{Color palettes}: $\Pa_{\rd-1}(u)$ for every $u\in U_{i-1}$,
\item[(4)] \textbf{Ideal palette size}: $\pa_{\rd-1}$,
\item[(5)] \textbf{Restriction hypergraphs}: $\Hc^{\rd-1}_{\ell, c}$, defined on the vertex set $U_i$, for every $2\le \ell \le k$ and color $c$, 
\item[(6)] \textbf{$c$-degrees}: for every $u\in U_{i-1}$ and $c\in \Pa_{\rd-1}(u)$, the \textit{$c$-degree of $u$} (at the $(i-1)$-iteration) is defined as follows:
\begin{equation}\label{def:cdegree}
    d_{\rd-1}(u, c):=\sum_{\ell=2}^k(\cde \pa_{\rd-1})^{k-\ell} d^{\rd-1}_{\ell}(u, c)
\end{equation}
where $d^{\rd-1}_{\ell}(u, c):=\deg_{\ell}(u, \Hc^{\rd-1}_{\ell, c})$,
\item[(7)] \textbf{Ideal bound of $c$-degrees}: $t_{\rd-1}$,
\end{itemize}
which satisfy the following induction assumptions:
\begin{equation}\label{ite:asump1}
d_{\rd-1}(u, c) \leq 2t_{\rd-1}, \quad \text{for all }u\in U_{\rd-1} \text{ and } c\in \Pa_{\rd-1}(u),
\end{equation}
and
\begin{equation}\label{ite:asumpcode}
\delta_{s, \ell}(\Hc^{\rd-1}_{\ell, c})\leq (\cde\pa_{\rd-1})^{\ell-s} \quad \text{for all } c.
\end{equation}
Observe that \eqref{ite:asump1} further implies 
\begin{equation}\label{ite:asump2}
d^{\rd-1}_{\ell}(u, c)\leq 2t_{\rd-1}/(\cde \pa_{\rd-1})^{k-\ell} \quad \text{for each } \ell.
\end{equation}
Again, these $\Hc^{\rd-1}_{\ell, c}$ hypergraphs are designed to capture all the \textit{crucial} coloring restrictions on the remaining vertices, in the sense that
\begin{equation}\label{ite:asumpRH}
\text{any extension of $\Psi_{i-1}$ will be a proper coloring of $\Hc$, if it is $\left(\{\Pa_{i-1}(u)\}, \{\Hc^{i-1}_{c,\ell}\}\right)$-compatible.}
\end{equation}

The iterative algorithm proceeds through the following nine steps:
\begin{enumerate}
\item \textbf{Activate colors}: For every vertex $u\in U_{\rd-1}$, independently \textit{activate} each of the color $c\in \Pa_{\rd-1}(u)$ with probability \[\pi_{\rd}:=\cac\frac{(\cde\pa_{\rd-1})^{k-2}}{4t_{\rd-1}}, \]
where $\cac:=1/(60\cdot k^4\cdot 2^k)$ is a chosen small constant.
For formal notation, let
\begin{equation}\label{def:act}
\ac^{\rd}_{u, c}:=\begin{cases}
1, & \text{ if } c \text{ is activated on } u \\
0, & \text{ otherwise, }
\end{cases}
\end{equation}
denote the indicator random variable for this activation operation, and define the \textit{set of activated colors} of $u$ as \[\Ac_{\rd}(u):=\{c\in\Pa_{\rd-1}(u): \ac^{\rd}_{u, c}=1\}.\]

\item \textbf{Remove potentially unusable colors.} We say a color $c$ is \textit{lost} at a vertex $u$, if there exists an edge $e\in\cup_{\ell\geq 2}\Hc^{\rd-1}_{\ell, c}$ s.t. 
$u\in e$ and $\ac^{\rd}_{u, c}=1$ for all $v\in e\setminus\{u\}$. 
Denote by $\Lo_{\rd}(u)$ the \textit{set of lost colors} of $u$, and by $\nlo^{\rd}_{u, c}$ the probability that a color $c$ is \textit{not} lost at $u$. Note that by the union bound,
\[
\begin{split}
\nlo^{\rd}_{u, c}&= \Prb(c\notin\Lo_{\rd}(u))
\geq 1 - \sum_{\ell=2}^{k}d^{\rd -1}_{\ell}(u, c)\pi_{\rd}^{\ell-1}
\geq 1 - \sum_{\ell=2}^{k}\frac{2t_{\rd-1}}{(\cde\pa_{\rd-1})^{k-\ell}}\pi_{\rd}^{\ell-1}\\
&= 1- \frac{2t_{\rd-1}}{(\cde\pa_{\rd-1})^{k-1}}\sum_{\ell=2}^{k}(\cde\pa_{\rd-1}\pi_{\rd})^{\ell-1}
\geq 1- \frac{2t_{\rd-1}}{(\cde\pa_{\rd-1})^{k-1}}2\cde\pa_{\rd-1}\pi_{\rd} = 1- \cac,
\end{split}
\]
where the second inequality follows from the induction hypothesis~\eqref{ite:asump2}, and the last inequality uses a fact that $\cde\pa_{\rd-1}\pi_{\rd} \le 1/2$, which will follow from the termination condition of the algorithm (see later in~\eqref{indas4} for details).
\item \textbf{Temporary color palettes.}
For ease of notation, let
\[
\beta:=1 - \cac,
\]
and for every vertex $u\in U_{\rd-1}$, independently \textit{select} each color $c\in \Pa_{\rd-1}(u)$ with probability $\beta/\nlo^{\rd}_{u, c}$.
For formal notation, define the indicator variable
\begin{equation}\label{def:sel}
\ke^{\rd}_{u, c}=\begin{cases}
1, & \text{ if } c \text{ is selected on } u \\
0, & \text{ otherwise, }
\end{cases}
\end{equation}
and let the \textit{set of selected colors} of $u$ be $\Ke_{\rd}(u):=\{c\in\Pa_{\rd-1}(u): \ke^{\rd}_{u, c}=1\}$.
Then, for every vertex $u\in U_{\rd-1}$, the \textit{temporary palette} of $u$ at the $i$-th iteration is defined as
\[\nPa_{\rd}(u):=\Ke_{\rd}(u)\setminus \Lo_{\rd}(u).\] 
Note that for every $c\in\Pa_{i-1}(u)$,
\begin{equation}\label{eq:ncolor}
\Prb(c\in\nPa_{\rd}(u))
=\Prb(c\notin\Lo_{\rd}(u))\Prb(\ke^{\rd}_{u, c}=1)
=\beta,
\end{equation}
and therefore $\E[\nPa_{\rd}(u)] = \beta|\Pa_{\rd-1}(u)|$.
While this additional selection might seem wasteful, as it removes colors that do not cause conflicts in the proper coloring, it establishes more `uniform-sized' palettes across all vertices, which simplifies the analysis of the algorithm.
\item \textbf{Update partial coloring.} 
For a vertex $u\in U_{i-1}$, we \textit{permanently color} $u$ with any color, say $u_c$, from $\Ac_{\rd}(u)\cap \nPa_{\rd}(u)$, if $\Ac_{\rd}(u)\cap \nPa_{\rd}(u) \neq \emptyset$.
Once a vertex is permanently colored, it is immediately removed from $U_{\rd-1}$.
Let $U_{\rd}$ be the set of remaining vertices in $U_{\rd-1}$, i.e., 
\[
U_{\rd}:=\{u\in U_{\rd-1}:~\Ac_{\rd}(u)\cap \nPa_{\rd}(u) = \emptyset\}.
\]
We then obtain a new partial coloring $\Psi_i$ of $\Hc$ with support set $V(\Hc)\setminus U_i$, where
\[
\Psi_i(u):=\begin{cases}
u_c, & \text{ for } u\in U_{i-1}\setminus U_{i}; \\
\Psi_{i-1}(u), & \text{ for } u\in V(\Hc)\setminus U_{i-1}.
\end{cases}
\]
Note that none of edges $e$ in $\Hc^{i-1}_{\ell, c}$ are monochromatic with color $c$ under $\Psi_i$.
If it did, it would mean all vertices in $e$ have color $c$ \textit{activated}, but then according to step 2, color $c$ must be \textit{lost} at each vertex $u$ in 
$e$, making it unavailable in $\nPa_{\rd}(u)$.
This, along with \eqref{ite:asumpRH}, indicates that $\Psi_i$ is a proper partial coloring.

\item \textbf{Temporary restriction hypergraphs.}
For every color $c$ and $\ell$, define the \textit{temporary $(c, \ell)$-restriction hypergraph} as
\[
\begin{split}
\hat{\Hc}^{\rd}_{\ell, c}:=&\left\{e\in\Hc^{\rd-1}_{\ell, c}:\ e\subseteq U_{\rd},\ c\in \nPa_{\rd}(u) \text{ for all } u\in e\right\} \\
 &+ \left\{S\subseteq \binom{U_{\rd}}{\ell}: S\subsetneq e\in \bigcup_{s> \ell}\Hc^{\rd-1}_{s, c},\ c\in \bigcap_{u\in S}\nPa_{\rd}(u)\ \&~\Psi_i(v)=c \text{ for all } v\in e\setminus S\right\}.
 \end{split}
\]
Observe that, by the definition of $\hat{\Hc}^{\rd}_{\ell, c}$ and the fact that $\nPa_{\rd}(u) \subseteq \Pa_{\rd-1}(u)$, any $(\{\nPa_{\rd}(u)\}, \{\hat{\Hc}^{\rd}_{\ell, c}\})$-compatible coloring on $U_{\rd}$, together with $\Psi_{\rd}\setminus \Psi_{\rd-1}$, forms a $(\{\Pa_{\rd-1}(u)\}, \{\Hc^{\rd-1}_{\ell, c}\})$-compatible coloring on $U_{\rd-1}$.
This, along with \eqref{ite:asumpRH}, shows that our $\hat{\Hc}^{\rd}_{\ell, c}$ hypergraphs capture all the crucial coloring restrictions on the remaining vertices: \begin{equation}\label{fact:compa1}
\text{any extension of $\Psi_i$ will be a proper coloring of $\Hc$, if it is $(\{\nPa_{\rd}(u)\}, \{\hat{\Hc}^{\rd}_{\ell, c}\})$-compatible.}
\end{equation}

\item \textbf{Set new ideal palette size and ideal $c$-degree bound.} Set 
\begin{equation}
\pa_{\rd}:=\beta\pa_{\rd-1}, \quad t_{\rd}:=\alpha'_{\rd}\beta^{k-1}t_{\rd-1},
\end{equation}
where $\alpha'_{\rd}=1 - \beta\pi_{\rd}\pa_{\rd-1}/6$. 
These represent the \textit{ideal palette size} and the \textit{ideal $c$-degree bound} at the $i$-th iteration, respectively. 
Since $\pi_{\rd}\pa_{\rd-1} \le 1$ (as detailed later in~\eqref{indas4}), $\alpha'>0$ is a valid choice.
Moreover, $\alpha'_{\rd} < 1$ indicates that, the $c$-degrees are expected to decrease slightly faster than the palette size, and thus their ratio (as defined later in \eqref{def:degPalRadio}) is indeed monotone decreasing.

\item \textbf{Filter out `heavy' colors and finalize the color palettes.}
As discovered by Pettie-Su~\cite{pettie2015distributed}, we need to \textit{filter out} colors with large $c$-degrees to better control the algorithm.
We first let the \textit{temporary $c$-degree} of $u$ be
\begin{equation}
\hat{d}_{\rd}(u, c):=\sum_{\ell=2}^k(\cde \pa_{\rd})^{k-\ell}\hat{d}^{\rd}_{\ell}(u, c),
\end{equation}
where $\hat{d}^{\rd}_{\ell}(u, c): = \deg_{\ell}(u, \hat{\Hc}^{\rd}_{\ell, c})$.
Then we define the \textit{new color palette} of $u$ (at the $i$-th iteration) as
\begin{equation}
\Pa_{\rd}(u) := \{c\in \nPa_{\rd}(u):\ \hat{d}_{\rd}(u, c)\leq 2t_{\rd}\}.
\end{equation}
We further assume, without loss of generality, that 
\begin{equation}\label{def:palet}
|\Pa_{\rd}(u)|:=\min\{|\{c\in \nPa_{\rd}(u):\ \hat{d}_{\rd}(u, c)\leq 2t_{\rd}\}|,\ \pa_{\rd}\},
\end{equation}
by arbitrarily deleting some extra colors from $\Pa_{\rd}(u)$.
This step provides an upper bound on the size of palettes, which will be used in the analysis of degree concentration.
Indeed, as will be shown later in Proposition~\ref{prop:pbou}, with high probability, we have $|\Pa_{\rd}(u)|=(1 - o(1)) \pa_{\rd}$ for each $u\in U_{\rd}$.
Note that since $\Pa_{\rd}(u)\subseteq \nPa_{\rd}(u)$,
\begin{equation}\label{fact:compa2}
\text{any $(\{\Pa_{\rd}(u)\}, \{\hat{\Hc}^{\rd}_{\ell, c}\})$-compatible coloring on $U_i$, is $(\{\nPa_{\rd}(u)\}, \{\hat{\Hc}^{\rd}_{\ell, c}\})$-compatible.}
\end{equation}

\item \textbf{Reduce codegrees and finalize restriction hypergraphs.} 
As mentioned in Section~\ref{sec:overview}, for the purpose of concentration analysis, we need to control the codegrees of our restriction hypergraphs using the \textit{codegree reduction algorithm} described in Section~\ref{sec:coredu}.
For integers $s, \ell\geq 0$, let $f(s, \ell):=(\cde\pa_{\rd})^{\ell-s}$, which satisfies the assumption of Proposition~\ref{prop:redu}.
For every color $c$, let
\[
\Fc'_{c}:=\left\{e\in\bigcup_{\ell=2}^{k}\hat{\Hc}^{\rd}_{\ell, c}:\ c\in \Pa_{\rd}(v) \text{ for all } v\in e\right\},
\] and denote by $\Fc_c$ the $f$-reduction of $\Fc'_{c}$.
Then for every color $c$ and $\ell$, we define the \textit{$(c, \ell)$-restriction hypergraph} at the $i$-th iteration as
\[
\Hc^{\rd}_{\ell, c}:=\{e\in\Fc_c: |e|=\ell\}.
\]
By Proposition~\ref{prop:redu}, for every $c$ and $2\leq s<\ell\leq k$,
\begin{equation}\label{reduce:code}
\delta_{s, \ell}(\Hc^{\rd}_{\ell, c})\leq (\cde\pa_{\rd})^{\ell-s},
\end{equation}
and
\[
\text{any $(\{\Pa_{\rd}(u)\}, \{\Hc^{\rd}_{\ell, c}\})$-compatible coloring on $U_i$, is $(\{\Pa_{\rd}(u)\}, \{\hat{\Hc}^{\rd}_{\ell, c}\})$-compatible.}
\]
This, together with~\eqref{fact:compa1} and~\eqref{fact:compa2}, leads to that
\begin{equation}\label{outout:compat}
\text{any extension of $\Psi_i$ will be a proper coloring of $\Hc$, if it is $(\{\Pa_{\rd}(u)\}, \{\Hc^{\rd}_{\ell, c}\})$-compatible;}
\end{equation}
or, intuitively speaking, these $\Hc^{\rd}_{\ell, c}$ hypergraphs capture all the \textit{crucial} coloring restrictions on the remaining vertices.
Without loss of generality, we further assume that
\begin{equation}\label{reduce:de}
\text{
for each $c$, there is no pair of edges $e_1, e_2\in \sum_{\ell=2}^{k}\Hc^{\rd}_{\ell, c}$ such that $e_1\subsetneq e_2$,
}
\end{equation}
as otherwise we would always delete the larger edge $e_2$ without increasing the codegrees, or affecting compatibility.

\item \textbf{Finalize $c$-degrees.} For every $u\in U_{i}$ and $c\in \Pa_{\rd}(u)$, we define the \textit{$c$-degree of $u$} (at the $i$-iteration) as 
\begin{equation}
d_{\rd}(u, c):=\sum_{\ell=2}^k(\cde \pa_{\rd})^{k-\ell}d^{\rd}_{\ell}(u, c)
\end{equation}
where $d^{\rd}_{\ell}(u, c):=\deg_{\ell}(u, \Hc^{\rd}_{\ell, c})$.
It is important to note that, due to the choice of weights, the $f$-reduction in step 8 does not increase the value of our weighted sum of degrees, as $(\cde\pa_{\rd})^{k-s}\cdot 1 - (\cde\pa_{\rd})^{k-\ell}\cdot (\cde\pa_{\rd})^{\ell-s}=0$.
The same holds true for the assumption~\eqref{reduce:de} as we only delete edges. Therefore, for all $c \in \Pa_{\rd}(u)$, we have
\begin{equation}\label{cde:bound}
d_{\rd}(u, c)\leq \hat{d}_{\rd}(u, c)\leq 2t_i.
\end{equation}

After step 9, we move to the next iteration until the termination condition is met.
\end{enumerate}

\subsubsection{The termination condition.}
For each $\rd\geq 0$, let 
\begin{equation}\label{def:degPalRadio}
\zeta_{\rd}:=\frac{t_{\rd}}{(\cde\pa_{\rd})^{k-1}},\end{equation} which measures the ratio between the ideal $c$-degree bound and the ideal palette size. 
We terminate this semi-random coloring algorithm after $T$ iterations, where $T$ is the first integer such that 
\begin{equation}\label{ratioT}
\zeta_{T}\leq 1/8k.
\end{equation}
Observe that $\zeta_{0}=\frac{(k-1)\Delta}{(\cde C)^{k-1}}=(k-1)\log\Delta$, and
\[
\zeta_{\rd}=\frac{\alpha'_{\rd}\beta^{k-1}t_{\rd-1}}{(\cde\beta\pa_{\rd-1})^{k-1}}=\alpha'_{\rd}\frac{t_{\rd-1}}{(\cde\pa_{\rd-1})^{k-1}}
=\left(1 - \frac{\beta}{6}\left(\cac\frac{(\cde\pa_{\rd-1})^{k-2}}{4t_{\rd-1}}\right)\pa_{\rd-1}\right)\frac{t_{\rd-1}}{(\cde\pa_{\rd-1})^{k-1}}
=\zeta_{\rd -1} - \frac{\beta\cac}{24\cde},
\]
which is a strictly decreasing function.
Therefore, we have
\[T\leq \frac{24(k-1)\cde}{(1 - \cac)\cac}\log\Delta.\] 

\subsubsection{Summary}
In summary, for a rank $k$ hypergraph $\Hc$ that satisfies all the degree and codegree conditions in Theorem~\ref{mainthm2}, and any list assignment $\mathcal{P}$ with $|\Pa(u)|= \Cco$ for all vertices $u$, our coloring algorithm generates a sequence of outputs $\{U_{\rd}, \Psi_\rd, \{\Pa_\rd(u)\}, \{\Hc^\rd_{\ell,c}\}\}_{\rd=0}^{T}$ satisfying the following proposition.


\begin{prop}\label{prop:output}
Let $\Hc$ a rank $k$ hypergraph that satisfies all the degree and codegree conditions in Theorem~\ref{mainthm2}, and $\mathcal{P}$ be a list assignment with $|\Pa(u)|= \Cco$ for all vertices $u$. 

Suppose that $\{U_{\rd}, \Psi_\rd, \{\Pa_\rd(u)\}, \{\Hc^\rd_{\ell,c}\}\}_{\rd=0}^{T}$ is a sequence of outputs generated from our coloring algorithm with input $\{\Hc, \mathcal{P}\}$. 
For every $0\le \rd \le T$, $\{U_{\rd}, \Psi_\rd, \{\Pa_\rd(u)\}, \{\Hc^\rd_{\ell,c}\}\}$ satisfies the following properties:
\begin{enumerate}
    \item[(i)] $d_{\rd}(u, c)=\sum_{\ell=2}^k(\cde \pa_{\rd})^{k-\ell}d^{\rd}_{\ell}(u, c) \leq 2t_{\rd}, \quad \text{for all }u\in U_{\rd} \text{ every } c\in \Pa_{\rd}(u)$;
\item[(ii)] $\delta_{s, \ell}(\Hc^{\rd}_{\ell, c})\leq (\cde\pa_{\rd})^{\ell-s} \quad \text{for every } 2\le s < \ell \le k \text{ and } c;$
\item[(iii)] any extension of $\Psi_{\rd}$ will be a proper coloring of $\Hc$, if it is $\left(\{\Pa_{\rd}(u)\}, \{\Hc^{\rd}_{\ell, c}\}\right)$-compatible.
\end{enumerate}
\end{prop}
\begin{proof}
    The case $i=0$ is established in Section~\ref{sec:alg:initial}. For $i\ge 1$, it follows from the mechanics of the algorithm, see~\eqref{cde:bound}, \eqref{reduce:code} and~\eqref{outout:compat}.
\end{proof}

In particular, when the algorithm terminates, we obtain a proper partial coloring  $\Psi_T$ of $\Hc$, although some vertices, $U^T$, may remain uncolored. 
However, since $\zeta_{T}$ is now sufficiently small, it is not difficult to properly color the remaining vertices in $U^T$ using the standard Local Lemma, see details in Section~\ref{sec:mainthm2}.

\subsection{Notation and parameters}\label{sec:notation}
As outlined above, the algorithm is parameterized by the ideal $c$-degree bound ${t_{\rd}}$ and the ideal palette sizes ${p_{\rd}}$. In practice, the \textit{actual} palette sizes and $c$-degrees after $\rd$ rounds may deviate from these ideal values. To account for these deviations, we define 
\[p'_{\rd}:=(1 - \er/8)^{\rd}\pa_{\rd} \quad \text{and} \quad t'_{\rd}:=(1 + \er)^{\rd}t_{\rd}\]
as the \textit{approximate} versions of $p_{\rd}$ and $t_{\rd}$, incorporating a small error control parameter $\er:=4\Delta^{-\theta}\log^{2k}\Delta$, where $\theta=1/4k$. Additionally, for ease of notation, we introduce another parameter \[\alpha_{\rd}:=1 - \beta\pi_{\rd}\pa_{\rd-1}/5,\] which will be used to measure the decreasing rate of $c$-degrees. 

As mentioned in Section~\ref{sec:overview}, rather than concentrating on $c$-degrees, we will show that at the $i$-iteration, the \textit{average $c$-degree} of a vertex $u$, defined as
\[
\Lambda_{\rd}(u):=\sum_{c\in \Pa_{\rd}(u)}\frac{d_{\rd}(u, c)}{|\Pa_{\rd}(u)|},\]
can be well-concentrated.
Moreover, following an idea of Pettie-Su~\cite{pettie2015distributed}, we introduce the following concept to balance the tradeoff between the palette size and the average $c$-degree:
\[
\D_{\rd}(u):=\lambda_{\rd}(u)\Lambda_{\rd}(u) + (1 - \lambda_{\rd}(u))2t_{\rd},
\]
where $\lambda_{\rd}(u):=\min\left\{1, |\Pa_{\rd}(u)|/p'_{\rd}\right\}$.

We summarize all notation and parameters in Table~\ref{table2} for the convenience of readers.
\begin{table}[H]
\centering
\begin{tabular}{lll}
\hline
\hline
Notation & Value & Description \\
\hline
$U_{\rd}$ & & the set of uncolored vertices. \\
$\Psi_{\rd}$ & & proper partial coloring on $U_i$\\
$\Pa_{\rd}(u)$ &  & Color palette of the vertex $u$\\
$\Hc^{\rd}_{\ell, c}$ & & $(c, \ell)$-restriction hypergraph\\
$d^{\rd}_{\ell}(u, c)$ & & number of edges of $\Hc^{\rd}_{\ell, c}$ that are incident to $u$\\
$d_{\rd}(u, c)$ & $\sum_{\ell=2}^k(\cde \pa_{\rd})^{k-\ell}d^{\rd}_{\ell}(u, c)$& $c$-degree of $u$\\
$\Lambda_{\rd}(u)$ &$\sum_{c\in \Pa_{\rd}(u)}\frac{d_{\rd}(u, c)}{|\Pa_{\rd}(u)|}$ &average $c$-degree of a vertex $u$\\
$\D_{\rd}(u)$ & $\lambda_{\rd}(u)\Lambda_{\rd}(u) + (1 - \lambda_{\rd}(u))2t_{\rd}$ & adjusted average $c$-degree of a vertex $u$\\
$\lambda_{\rd}(u)$ &$\min\left\{1, |\Pa_{\rd}(u)|/p'_{\rd}\right\}$& \\
\hline
$\nPa_{\rd}(u)$ &  & temporary palette of the vertex $u$\\
$\hat{\Hc}^{\rd}_{\ell, c}$ & & temporary $(c, \ell)$-restriction hypergraph\\
$\hat{d}^{\rd}_{\ell}(u, c)$ & & number of edges of $\hat{\Hc}^{\rd}_{\ell, c}$ that are incident to $u$\\
$\hat{d}_{\rd}(u, c)$ & $\sum_{\ell=2}^k(\cde \pa_{\rd})^{k-\ell}\hat{d}^{\rd}_{\ell}(u, c)$& temporary $c$-degree of $u$\\
\hline
$\ac^{\rd}_{u, c}$ & & indicator variable for the event that $c$ is activated on $u$\\
$\ke^{\rd}_{u, c}$ & & indicator variable for the event that $c$ is selected on $u$\\
$\Ac_{\rd}(u)$ & & the set of activated color of the vertex $u$\\
$\Lo_{\rd}(u)$ & & the set of lost color of the vertex $u$\\
$\Ke_{\rd}(u)$ & & the set of selected color of the vertex $u$\\
$\pi_{\rd}$ & $\cac\frac{(\cde\pa_{\rd-1})^{k-2}}{4t_{\rd-1}}$  & probability of color-activation\\
$\beta$ & $1 - \frac{4t_{\rd -1}\pi_{\rd}}{(\cde \pa_{\rd-1})^{k-2}}=1 - \cac$   & ideal probability of retaining a color\\
$\alpha_{\rd}$ &  $1 - \beta\pi_{\rd}\pa_{\rd-1}/5$  & decreasing rate\\
$\alpha'_{\rd}$ & $1 - \beta\pi_{\rd}\pa_{\rd-1}/6$  & adjusted decreasing rate\\
\hline
$\pa_{0}$ & $\Cco=\cde^{-1}(\Delta/\log\Delta)^{1/(k-1)}$ & ideal palette size at the start of the algorithm\\
$\pa_{\rd}$ & $\beta\pa_{\rd-1}$ & ideal palette size at $i$-th iteration\\
$\pa'_{\rd}$ & $(1 - \er/8)^{\rd}\pa_{\rd}$  & approximate palette size \\
$t_{0}$ & $(k-1)\Delta$  & ideal $c$-degree bound at the start of the algorithm\\
$t_{\rd}$ & $\alpha'_{\rd}\beta^{k-1}t_{\rd-1}$  & ideal $c$-degree bound at $i$-th iteration\\
$t'_{\rd}$ & $(1 + \er)^{\rd}t_{\rd}$   & approximate $c$-degree bound\\
$\zeta_{\rd}$ & $\frac{t_{\rd}}{(\cde\pa_{\rd})^{k-1}}$ & a ratio\\
$T$  &  $\leq\frac{24(k-1)\cde}{(1 - \cac)\cac}\log\Delta$  & total number of iterations\\
\hline
$\cde$  & $1/(60\cdot2^k)$  & constant\\
$\cac$  & $1/(60\cdot k^4\cdot 2^k)$  & constant\\
$\theta$  & $1/4k$  & constant\\
$\er$ &  $4\Delta^{-\theta}\log^{2k}\Delta$  & error term \\
\hline
\hline
\end{tabular}
\caption{Notation and parameters}
\label{table2}
\end{table}
To conclude this section, we present three useful facts that follow directly from the definitions of the parameters:
\begin{equation}\label{indas6}
 1/8k \leq \zeta_{\rd} \leq (k-1)\log\Delta \quad \text{for all }0\leq \rd<T,
\end{equation}
\begin{equation}\label{indas4}
 \Omega(1/\log\Delta)\le \cde\pi_{\rd}\pa_{\rd-1} \le \pi_{\rd}\pa_{\rd-1}= \cac/(4\cde\zeta_{\rd-1}) \le \cac\cdot 2k/\cde \le 1/2 \quad \text{for all }1\leq \rd\leq T,
\end{equation}
and
\begin{equation}\label{indas7}
\pa_{\rd}\geq \beta^{T}\pa_0 \geq (1 - \cac)^{\frac{24(k-1)\cde}{(1 - \cac)\cac}\log\Delta}\cde^{-1}(\Delta/\log\Delta)^{1/(k-1)}\geq\Delta^{\frac{1}{2(k-1)}} \quad \text{for all $0\leq i\leq T$}.
\end{equation}
Note that \eqref{indas6} and \eqref{indas7} further imply
\begin{equation}\label{indas5}
\frac{t_{\rd-1}}{(\cde\pa_{\rd-1})^{k-2}} = \cde\pa_{\rd-1}\zeta_{\rd-1}= \Omega\left(\Delta^{1/2(k-1)}\right) \quad \text{for all }1\leq \rd\leq T,
\end{equation}
and therefore
\begin{equation}\label{pibound}
\pi_{\rd}=O\left(\Delta^{-1/2(k-1)}\right) \quad \text{for all }1\leq \rd\leq T.
\end{equation}

\section{Proof of Theorem~\ref{mainthm2}}\label{sec:mainthm2}
Throughout the rest of the paper, let $\{U_{\rd}, \Psi_\rd, \{\Pa_\rd(u)\}, \{\Hc^\rd_{\ell,c}\}\}_{\rd=0}^{T}$ denote a sequence of outputs generated from our coloring algorithm with input $\{\Hc, \mathcal{P}\}$, where $\Hc$ is a rank $k$ triangle-free hypergraph that satisfies all the degree and codegree conditions in Theorem~\ref{mainthm2}, and $\mathcal{P}$ is a list assignment with $|\Pa(u)|= \Cco=\cde^{-1}(\Delta/\log\Delta)^{1/(k-1)}$ for all vertices $u$. 

We now state our key lemma that yields the proof of Theorem~\ref{mainthm2}.

\begin{lemma}[Key Lemma]\label{lem:de}
Let $1\leq \rd\leq T$. If $\D_{\rd-1}(u) \leq t'_{\rd-1}$ for all $u\in U_{\rd-1}$, then there exists $\{U_{\rd}, \Psi_{\rd}, \{\Pa_{\rd}(u)\}, \{\Hc^\rd_{\ell,c}\}\}$ such that \[\text{$\D_{\rd}(u) \leq t'_{\rd}$ for all $u\in U_{\rd}$.}\]
\end{lemma}
Lemma~\ref{lem:de} immediately implies the following corollary.
\begin{cor}\label{cor:de}
For every $0\leq \rd\leq T$, there exists $\{U_{\rd}, \Psi_{\rd}, \{\Pa_{\rd}(u)\}, \{\Hc^\rd_{\ell,c}\}\}$ such that 
$\D_{\rd}(u) \leq t'_{\rd}$ for all $u\in U_{\rd}$.
\end{cor}
\begin{proof}
For $i=0$, note that for every vertex $u\in U_0$, we have $\lambda_0(u)=\min\left\{1, |\Pa_{0}(u)|/p'_{0}\right\}=1$, and thus \[\D_{0}(u)=\Lambda_{0}(u)=\sum_{c\in \Pa_{0}(u)}\frac{d_{0}(u, c)}{|\Pa_{0}(u)|}\leq t_0 = t'_0,\]
where the inequality follows from~\eqref{cde:zero}.
The case $i\geq 1$ then follows by induction on $i$, using Lemma~\ref{lem:de}.
\end{proof}

The proof of Lemma~\ref{lem:de} is lengthy and technical, and will be distributed across Sections~\ref{sec:mainlemma},~\ref{sec:expe},and~\ref{sec:concede}. 
In the rest of this section, we explain how our main theorem, Theorem~\ref{mainthm2}, derives from it.
We start with the following simple proposition. 
\begin{prop}\label{prop:pbou}
Let $0\leq \rd \leq T$ and $u\in U_i$.
If $\D_{\rd}(u)\leq t'_{\rd}$, then
\begin{equation*}
|\Pa_{\rd}(u)|
\geq (1 -  (1 + \er)^{\rd}/2)\pa'_{\rd}
= (1 -  (1 + \er)^{\rd}/2)(1 - \er/8)^{\rd}\pa_{\rd}.
\end{equation*}
\end{prop}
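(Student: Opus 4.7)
The proof is a short, direct calculation from the definition of $\D_{\rd}(u)$, unpacking the minimum inside $\lambda_{\rd}(u)$. The key observation is that each $c$-degree $d_{\rd}(u,c)$ is non-negative, so $\Lambda_{\rd}(u)\geq 0$, and hence
\[
\D_{\rd}(u)\;\geq\;(1-\lambda_{\rd}(u))\,2t_{\rd}.
\]
Thus if $\D_{\rd}(u)\leq t'_{\rd}$ we immediately obtain $(1-\lambda_{\rd}(u))\,2t_{\rd}\leq t'_{\rd}=(1+\er)^{\rd}t_{\rd}$, i.e.
\[
1-\lambda_{\rd}(u)\;\leq\;(1+\er)^{\rd}/2.
\]

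I would now split into two cases based on which term of $\min\{1,|\Pa_{\rd}(u)|/\pa'_{\rd}\}$ is active. First, if $|\Pa_{\rd}(u)|\geq \pa'_{\rd}$, then $\lambda_{\rd}(u)=1$ and the desired conclusion $|\Pa_{\rd}(u)|\geq (1-(1+\er)^{\rd}/2)\pa'_{\rd}$ is trivially true, since the right-hand side is at most $\pa'_{\rd}$. Otherwise $\lambda_{\rd}(u)=|\Pa_{\rd}(u)|/\pa'_{\rd}$, and substituting into the displayed bound gives
\[
1-\frac{|\Pa_{\rd}(u)|}{\pa'_{\rd}}\;\leq\;(1+\er)^{\rd}/2,
\]
which rearranges to the claimed lower bound $|\Pa_{\rd}(u)|\geq (1-(1+\er)^{\rd}/2)\pa'_{\rd}$. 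Finally, the equality $\pa'_{\rd}=(1-\er/8)^{\rd}\pa_{\rd}$ comes straight from the definition of $\pa'_{\rd}$ in Table~\ref{table2}.

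There is essentially no obstacle here; the only subtlety is remembering to treat the saturation case $\lambda_{\rd}(u)=1$ separately so that one does not divide by $\pa'_{\rd}$ improperly, and to note that $\Lambda_{\rd}(u)\geq 0$ so that dropping its contribution only decreases $\D_{\rd}(u)$. This proposition simply extracts the quantitative meaning of the definition of $\D_{\rd}(u)$: controlling $\D_{\rd}(u)$ from above forces either the palette to be large or the average $c$-degree to be small, and in both regimes the palette size admits the uniform lower bound stated.
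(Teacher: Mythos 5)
Your proof is correct and follows essentially the same route as the paper's: drop the nonnegative $\lambda_{\rd}(u)\Lambda_{\rd}(u)$ term to get $(1-\lambda_{\rd}(u))2t_{\rd}\leq t'_{\rd}$, then unwind the definition of $\lambda_{\rd}(u)$. The only cosmetic difference is that the paper compresses your two-case analysis into the single observation $|\Pa_{\rd}(u)|/\pa'_{\rd}\geq\lambda_{\rd}(u)$, which holds regardless of which branch of the minimum is active.
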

\begin{proof}
By the definition of $\D_{\rd}(u)$, if $\D_{\rd}(u)\leq t'_{\rd}$, then we have $(1 - \lambda_{\rd}(u))2t_{\rd}\leq \D_{\rd}(u)\leq t'_{\rd}.$
This, together with the definition of $\lambda_{\rd}$, implies that
\[
\frac{|\Pa_{\rd}(u)|}{p'_{\rd}}\geq\lambda_{\rd}(u)\geq 1 - \frac{t'_{\rd}}{2t_{\rd}}=1 -  \frac{(1 + \er)^{\rd}}{2},
\]
which completes the proof.
\end{proof}

The mechanics of the algorithm---specifically,  filtering out colors with large $c$-degrees---ensures that the 
$c$-degrees of hypergraphs decrease at a fairly fast speed, while Corollary~\ref{cor:de} and Proposition~\ref{prop:pbou} together imply that, at termination, there will still be a sufficiently large number of usable colors for each uncolored vertex. We can then color the remaining vertices using the standard approach, the Local Lemma.
\begin{proof}[Proof of Theorem~\ref{mainthm2}]
Applying our (semi-random) coloring algorithm (as described in Section~\ref{sec:algo}) to $\Hc$, we obtain a proper partial coloring $\Psi_T$ of $\Hc$, with potentially some uncolored vertices in $U^T$.
Additionally, by Corollary~\ref{cor:de} and Proposition~\ref{prop:pbou}, there exists $\Psi_T$ so
that for all $u\in U_T$,
\[|\Pa_{T}(u)|\ge (1 -  (1 + \er)^{T}/2)(1 - \er/8)^{T}\pa_{T}\ge (1 -  (1 + 2\er T)/2)(1 - \er T/8)\pa_{T}\geq \pa_{T}/4.\]
where the last two inequalities follow from $\er \ll \er T \ll 1$ as $\Delta$ becomes sufficiently large (see Table~\ref{table2}).

For every vertex $u\in U^T$, we color it with colors in $\Pa_{T}(u)$ uniformly at random. 
The goal is to show that with positive probability, there exists a $\left(\{\Pa_{T}(u)\}, \{\Hc^T_{\ell, c}\}\right)$-compatible coloring. By Proposition~\ref{prop:output} (iii), this coloring, combined with $\Psi_T$, forms a proper coloring of $\Hc$.

For a color $c$, an integer $2\leq \ell\leq k$, and an edge $e_{\ell}\in \Hc^T_{\ell, c}$, let $\A_{e_{\ell}, c}$ be the event that all the vertices in $e_{\ell}$ receive the color $c$. Then, it is equivalent to prove that, with positive probability, none of the events $\A_{e_{\ell}, c}$ occur.
First, observe that for any event $\A_{e_{\ell}, c}$ with $e_\ell\in \Hc^T_{\ell, c}$,
\[\Prb\left[\A_{e_{\ell}, c}\right]\le \prod_{u\in e_{\ell}}\frac{1}{|\Pa_{T}(u)|}
\leq \left(\frac{1}{\pa_{T}/4}\right)^{\ell} \ll \frac14,\]
where the last inequality uses~\eqref{indas7}.
Next, let $\mathcal{D}_{e_\ell,c}$ be the collection of events $\A_{e, c'}$ that are dependent on $\A_{e_{\ell}, c}$. Note that two events $\A_{e_{\ell}, c}$, $\A_{e, c'}$ are dependent if and only if $e_{\ell} \cap e \neq \emptyset$.
Therefore, we have
\[
\begin{split}
\sum_{\A\in\mathcal{D}_{e_\ell,c}}\Prb\left[\A\right] 
&\leq \sum_{u\in e_{\ell}}\sum_{c\in \Pa_{T}(u)}\sum_{\ell=2}^k\sum_{\substack{e\in \Hc^T_{\ell,c}\\ u\in e}}\Prb\left[\A_{e, c}\right]
=\sum_{u\in e_{\ell}}\sum_{c\in \Pa_{T}(u)}\sum_{\ell=2}^k\sum_{\substack{e\in \Hc^T_{\ell,c}\\ u\in e}}\frac{1}{|\Pa_{T}(u)|}\prod_{v\in e\setminus\{u\}}\frac{1}{|\Pa_{T}(v)|}\\
&\leq \sum_{u\in e_{\ell}}\sum_{c\in \Pa_{T}(u)}
\frac{1}{|\Pa_{T}(u)|}\sum_{\ell=2}^kd^{T}_{\ell}(u, c)\left(\frac{4}{\pa_{T}}\right)^{\ell -1} 
\leq \sum_{u\in e_{\ell}}\sum_{c\in \Pa_{T}(u)}
\frac{1}{|\Pa_{T}(u)|}\sum_{\ell=2}^kd^{T}_{\ell}(u, c)\left(\frac{1}{\cde\pa_{T}}\right)^{\ell -1}\\
&=\sum_{u\in e_{\ell}}\sum_{c\in \Pa_{T}(u)}
\frac{1}{|\Pa_{T}(u)|}\left(\frac{1}{\cde\pa_{T}}\right)^{k -1} d_{T}(u, c)
\leq k\frac{2t_{T}}{(\cde\pa_{T})^{k -1}} =2k\zeta_{T} \leq 1/4,
\end{split}
\]
where the second last inequality follows from~Proposition~\ref{prop:output} (i), and the last inequality uses~\eqref{ratioT}.
The Local Lemma (Theorem~\ref{locallemma}) then implies that there exists a coloring in which none of the events $\A_{e_{\ell}, c}$ occur, thereby completing the proof.
\end{proof}

\section{Proof of Key Lemma (Lemma~\ref{lem:de})}\label{sec:mainlemma}
In this section, we prove our Key Lemma, Lemma~\ref{lem:de}, assuming two lemmas that will be proved in later sections. Throughout this section (and the next two sections), we fix an integer $1\leq i\leq T$, restrict our analysis to the $\rd$-th iteration of the algorithm, and assume that
\begin{equation}\label{indas1}
\D_{\rd-1}(u) \leq t'_{\rd-1} \quad \text{for all } u\in U_{\rd-1}.
\end{equation}
Our goal is to show that, with positive probability, at the $i$-th iteration, our coloring algorithm produces an output $\{U_{\rd}, \Psi_{\rd}, \{\Pa_{\rd}(u)\}, \{\Hc^\rd_{\ell,c}\}\}$ that satisfies 
$\D_{\rd}(u) \leq t'_{\rd}$ for all $u\in U_{\rd}$.



\subsection{Probability space and random variables}\label{sec:probRV}

For every vertex $u\in U_{i-1}$ and color $c\in \Pa_{\rd-1}(u)$, we let $(\Omega_{u,c}^{{\rm ac}}, \Sigma_{u,c}^{{\rm ac}}, \mathbb{P}_{u,c}^{{\rm ac}})$ be the probability space associated with $\ac^{\rd}_{u, c}$ (see~\eqref{def:act} for definition), and let $(\Omega_{u,c}^{{\rm se}}, \Sigma_{u,c}^{{\rm se}}, \mathbb{P}_{u,c}^{{\rm se}})$ be the probability space associated with $\ke^{\rd}_{u, c}$ (see~\eqref{def:sel} for definition). 
\begin{defi}[Probability Space of Procedure]\label{def:probspace}
We define the product probability space
$$(\Omega,\Sigma,\mathbb{P}) =  \prod_{u\in U_{i-1}}\prod_{c\in \Pa_{\rd-1}(u)} (\Omega_{u,c}^{{\rm ac}}, \Sigma_{u,c}^{{\rm ac}}, \mathbb{P}_{u,c}^{{\rm ac}}) \times (\Omega_{u,c}^{{\rm se}}, \Sigma_{u,c}^{{\rm se}}, \mathbb{P}_{u,c}^{{\rm se}}).
$$
\end{defi}

For every $u\in U_{i-1}$, define the \textit{neighborhood} of $u$ as
\[
N_{\rd-1}(u):=\left\{v\in U_{\rd-1} \mid \{u, v\}\subseteq e \text{ for some } e\in \bigcup_{c\in\Pa_{\rd-1}(u)}\bigcup_{\ell=2}^{k}\Hc^{\rd-1}_{\ell, c}\right\},
\]
and for every integer $d\geq 2$, define the \textit{$d$-th neighborhood} of $u$ iteratively as $N^{d}_{\rd-1}(u):=\bigcup_{v\in N^{d-1}_{\rd-1}(u)}N_{\rd-1}(v)$.
Observe that 
\begin{equation}\label{bound:nbd}
\text{$|N_{\rd-1}(u)|\leq \sum_{\ell=2}^{k}\ell\cdot\Delta_{\ell}(\Hc)\le k^2\Delta$, and then $|N^d_{\rd-1}(u)|\leq (k^2\Delta)^d$ for every $d\ge 2$.}
\end{equation}

By the definition of $\D_{\rd}(u)$ (see Table~\ref{table2}) and~\eqref{cde:bound}, to establish an upper bound on $\D_{\rd}(u)$, it is important to analyze how $\sum_{c\in\nPa_{\rd}(u)}\hat{d}_{\rd}(u, c)$ evolves as the algorithm proceeds.
Recall from Section~\ref{sec:alg:itera} (step 5) that for each $\ell$ and $c$, the edges of $\hat{\Hc}^{\rd}_{\ell, c}$ come from two sources: 
\begin{itemize}
    \item the edges in $\Hc^{\rd-1}_{\ell, c}$,     
    \item and the $\ell$-sets contained in some edge of larger uniformity, i.e., in some $e\in \bigcap_{s>\ell}\Hc^{\rd-1}_{s, c}$.  
\end{itemize}
Specifically, for the first type, an edge is included in $\hat{\Hc}^{\rd}_{\ell, c}$, if and only if all its vertices remain uncolored and still have $c$ in their (temporary) palettes. For the second type, an $\ell$-set $L$ is added to $\hat{\Hc}^{\rd}_{\ell, c}$, if and only if there exists some edge $e$ containing $L$, such that all vertices in $e\setminus L$ are colored by $c$, and all vertices in $L$ still have $c$ in their (temporary) palettes.

In summary, for every $u\in U_{\rd}$, we have
\[
\begin{split}
\hat{d}^{\rd}_{\ell}(u, c)
&\leq 
\sum_{\substack{e\in\Hc^{\rd-1}_{\ell, c}\\ u\in e}} \I\left[\left(\forall x\in e\setminus\{u\},~x\in U_{\rd}\right) \wedge \left(\forall y\in e,~c\in\nPa_{\rd}(y)\right)\right]\\
&\quad +\sum_{s=\ell+1}^{k}\sum_{\substack{e\in \Hc^{\rd-1}_{s, c}\\ u\in e}}\sum_{Q\in\binom{e-u}{s-\ell}}
\I\left[\left(\forall x\in Q,~\Psi_i(x)=c\right)\wedge \left( \forall y\in e-Q,~c\in \nPa_{\rd}(y)\right)\right].
\end{split}
\]
Recall from Section~\ref{sec:alg:itera} (step 4) that $\Psi_i(x)=c$, only if $c$ is in both $\nPa_{\rd}(x)$ and $\Ac_{\rd}(x)$.
Then we further have
\[
\begin{split}
\hat{d}^{\rd}_{\ell}(u, c)
&\leq \sum_{\substack{e\in\Hc^{\rd-1}_{\ell, c}\\ u\in e}} \I\left[\left(\forall x\in e\setminus\{u\},~x\in U_{\rd}\right) \wedge \left(\forall y\in e,~c\in\nPa_{\rd}(y)\right)\right]\\
&\quad +\sum_{s=\ell+1}^{k}\sum_{\substack{e\in \Hc^{\rd-1}_{s, c}\\ u\in e}}\sum_{Q\in\binom{e-u}{s-\ell}}
\I\left[\left(\forall x\in Q,~c\in\Ac_{\rd}(x) \right)\wedge\left(\forall y\in e,~c\in \nPa_{\rd}(y)\right)\right]
\end{split}
\]
To simplify the notation, we introduce the following random variables:
\begin{equation}\label{def:keedge}
\X_{u,\ell}:=\sum_{c\in\Pa_{\rd-1}(u)}\sum_{\substack{e\in\Hc^{\rd-1}_{\ell, c}\\ u\in e}} \I\left[\left(\forall x\in e\setminus\{u\},~x\in U_{\rd}\right) \wedge \left(\forall y\in e,~c\in\nPa_{\rd}(y)\right)\right],
\end{equation}
and
\begin{equation}\label{def:inedge}
\X_{u,\ell, s}:=\sum_{c\in\Pa_{\rd-1}(u)}\sum_{\substack{e\in \Hc^{\rd-1}_{s, c}\\ u\in e}}\sum_{Q\in\binom{e-u}{s-\ell}}
\I\left[\left(\forall x\in Q,~c\in\Ac_{\rd}(x) \right)\wedge\left(\forall y\in e,~c\in \nPa_{\rd}(y)\right)\right].
\end{equation}
Observe that
\begin{equation}\label{def:sumde}
\begin{split}
\sum_{c\in\nPa_{\rd}(u)}\hat{d}_{\rd}(u, c)
&=\sum_{c\in\nPa_{\rd}(u)}\sum_{\ell=2}^k\hat{d}^{\rd}_{\ell}(u, c)(\cde\pa_{\rd})^{k-\ell}
=\sum_{\ell=2}^k(\cde\pa_{\rd})^{k-\ell}\sum_{c\in\nPa_{\rd}(u)}\hat{d}^{\rd}_{\ell}(u, c)\\
&\leq \sum_{\ell=2}^k(\cde\pa_{\rd})^{k-\ell}\left(
\X_{u,\ell} + \sum_{s=\ell+1}^{k}\X_{u,\ell, s}\right):=\X_{u}.
\end{split}
\end{equation}
We conclude this section with the following proposition, whose proof follows directly from the mechanics of our algorithm and is omitted here.
\begin{prop}\label{prop:dep}
\begin{itemize}
    \item[(i)] For every $u\in U_{i-1}$, the random variable $|\nPa_{\rd}(u)|$ is fully determined by the indicator variables $\{\ac^{\rd}_{v, c}:\ v\in N_{\rd-1}(u),~c\in \Pa_{\rd-1}(u)\}$ and $\{\ke^{\rd}_{u, c}:~c\in \Pa_{\rd-1}(u)\}$.
\item[(ii)] For every $u\in U_i$, the random variable $\sum_{c\in\nPa_{\rd}(u)}\hat{d}_{\rd}(u, c)$ is fully determined by the indicator variables $\{\ac^{\rd}_{v, c}:\ v\in N^2_{\rd-1}(u),~c\in \Pa_{\rd-1}(v)\}$ and $\{\ke^{\rd}_{v, c}:\ v\in N_{\rd-1}(u),~c\in \Pa_{\rd-1}(v)\}$.
\end{itemize}
\end{prop}

\subsection{Proof of Lemma~\ref{lem:de}}
The proof of Lemma~\ref{lem:de} is established on the following three lemmas. The first lemma states that the size of the temporary palette $\nPa_{\rd}(u)$ is well-concentrated. 
\begin{lemma}\label{lem:tempale}
For every $u\in U_{\rd-1}$, $\Prb(|\nPa_{\rd}(u)|\geq (1 - \er/8)\beta|\Pa_{\rd-1}(u)|) \geq 1- \exp\left(-\Omega(\log^2\Delta)\right).$
\end{lemma}

\begin{proof}
Recall from~\eqref{eq:ncolor} that $\E|\nPa_{\rd}(u)|=\beta|\Pa_{\rd-1}(u)|$. By Chernoff bounds (Lemma~\ref{chernoff}), we have
\[
\begin{array}{lll}
\Prb\left(|\nPa_{\rd}(u)|\leq (1 - \er/8)\beta|\Pa_{\rd-1}(u)|\right)& 
\leq \exp\left(-(\er/8)^2\beta|\Pa_{\rd-1}(v)|/2\right) &   \\
&=\exp\left(- \Omega\left( \er^2
\beta\pa_{\rd-1}\right)\right) & \text{Proposition~\ref{prop:pbou}}~\&~ \eqref{indas1}\\
&=\exp\left(- \Omega\left( \er^2
\pa_{\rd-1}\right)\right) &  \beta=\Theta(1)\\
&=\exp\left(-\Omega(\log^2\Delta)\right) & \eqref{indas7}~\&~\er=4\Delta^{-1/4k}\log^{2k}\Delta,
\end{array}
\]
completing the proof.
\end{proof}
The second lemma provides an upper bound on the expected sum of temporary $c$-degrees $\hat{d}_{\rd}(u, c)$ over all colors in $\nPa_{\rd}(u)$.
\begin{lemma}\label{lem:expe}
For every $u\in U_{\rd}$,
\[
\E\left[\X_u\right] \le \alpha'_{\rd}\beta^{k}|\Pa_{\rd-1}(u)|\Lambda_{\rd-1}(u).
\]
\end{lemma}
The third lemma shows that the sum $\sum_{c\in\nPa_{\rd}(u)}\hat{d}_{\rd}(u, c)$ is well-concentrated around its expectation.
\begin{lemma}\label{lem:concen}
For every $u\in U_{\rd}$,
\[
\Prb\left(\X_u - \E\left[\X_u\right]\leq (\er/4)\alpha'_{\rd}\beta^{k}|\Pa_{\rd-1}(u)|t_{\rd-1}\right)
\geq 1- \exp\left(-\Omega(\log^2\Delta)\right).
\]
\end{lemma}


The proof of Lemmas~\ref{lem:expe} and~\ref{lem:concen} are the most technical components of our work and will be deferred to Sections~\ref{sec:expe} and \ref{sec:concede}, respectively.

\begin{proof}[Proof of Lemma~\ref{lem:de}]
Note by Proposition~\ref{prop:dep} that the event $\mathcal{A}_u:=\{\D_{\rd}(u)> t'_{\rd}\}$ is fully determined by the indicator variables $\{\ac^{\rd}_{v, c}:\ v\in N^2_{\rd-1}(u),~c\in \Pa_{\rd-1}(v)\}$ and $\{\ke^{\rd}_{v, c}:\ v\in N_{\rd-1}(u)\cup\{u\},~c\in \Pa_{\rd-1}(v)\}$. Therefore, two events $\mathcal{A}_u$ and $\mathcal{A}_{u'}$ are dependent only if $u'\in N^4_{\rd-1}(u)$, where $|N^4_{\rd-1}(u)| \leq (k^2\Delta)^4$ by~\eqref{bound:nbd}.
By applying the Local Lemma to the bad events $\{\mathcal{A}_u\}$, it suffices to show that for every $u\in U_{i}$,
\[
\Prb(\D_{\rd}(u)\leq t'_{\rd})\geq 1- \exp\left(-\Omega(\log^2\Delta)\right).
\]

Fix an arbitrary $u\in U_{i}$. By Lemmas~\ref{lem:tempale},~\ref{lem:expe} and~\ref{lem:concen}, and inequality~\eqref{def:sumde}, with probability at least $1- \exp\left(-\Omega(\log^2\Delta)\right)$, the following holds:
\begin{gather}
\sum_{c\in\nPa_{\rd}(u)}\hat{d}_{\rd}(u, c) \leq \alpha'_{\rd}\beta^{k}|\Pa_{\rd-1}(u)|\left(\Lambda_{\rd-1}(u) + \frac{\er}{4}t_{\rd-1}\right)\label{eq:re2};\\
|\nPa_{\rd}(u)|\geq (1 - \er/8)\beta|\Pa_{\rd-1}(u)|\label{eq:re1}.
\end{gather}
Let $\hat{\Lambda}_{\rd}(u):=\sum_{c\in \nPa_{\rd}(u)}\frac{\hat{d}_{\rd}(u, c)}{|\nPa_{\rd}(u)|}$. Then we have
\begin{equation}
\label{eq:Lamh1}
\begin{aligned}
\hat{\Lambda}_{\rd}(u) 
 & \leq (1 + \er/2)\alpha'_{\rd}\beta^{k-1}\left(\Lambda_{\rd-1}(u) + \frac{\er}{4}t_{\rd-1}\right) & \text{by}~\eqref{eq:re2}\&~\eqref{eq:re1}\\
 & \leq \alpha'_{\rd}\beta^{k-1}\Lambda_{\rd-1}(u) + (\er/2+\er/4+\er^2/8)\alpha'_{\rd}\beta^{k-1}t'_{\rd-1} & \Lambda_{\rd-1}(u)\leq \D_{\rd-1}(u) \leq t'_{\rd-1}\\
 & \leq \alpha'_{\rd}\beta^{k-1}\Lambda_{\rd-1}(u) + \er(1+ \er)^{\rd-1}t_{\rd}  & \alpha'_{\rd}\beta^{k-1}t'_{\rd-1}=(1+ \er)^{\rd-1}t_{\rd}.
\end{aligned}
\end{equation}
Here, $\Lambda_{\rd-1}(u)\leq \D_{\rd-1}(u)$ follows from the definitions of $\Lambda_{\rd-1}(u)$ and $\D_{\rd-1}(u)$ (see Table~\ref{table2}) and Proposition~\ref{prop:output} (i), while $\D_{\rd-1}(u) \leq t'_{\rd-1}$ is an assumption of our lemma (see~\eqref{indas1}).
In particular, 
\begin{equation}\label{eq:Lamh2}
\hat{\Lambda}_{\rd}(u) \leq \alpha'_{\rd}\beta^{k-1}t'_{\rd-1} + \er(1+ \er)^{\rd-1}t_{\rd}
=(1+ \er)^{\rd-1}t_{\rd} + \er(1+ \er)^{\rd-1}t_{\rd} = (1+ \er)^{\rd}t_{\rd} \leq 2t_{\rd},
\end{equation}
where the last inequality uses $\er i \le \er T \ll 1.$
Now instead of directly bounding $\D_{\rd}(u)$, we consider 
\[\hat{\D}_{\rd}(u):=\hat{\lambda}_{\rd}(u)\hat{\Lambda}_{\rd}(u) + (1 - \hat{\lambda}_{\rd}(u))2t_{\rd},\]
where $\hat{\lambda}_{\rd}(u):=\min\left\{1, |\nPa_{\rd}(u)|/\pa'_{\rd}\right\}$.
Compared to $\hat{\D}_{\rd}(u)$, $\D_{\rd}(u)$ can be viewed as the average $c$-degree of the palette obtained by replacing colors in $\hat{P}_{\rd}(u)$ with larger 
$c$-degrees by dummy colors, each with a 
$c$-degree of exactly $2t_{\rd}$. 
Since the average only goes down in this process, we immediately have
\[
\D_{\rd}(u)\leq \hat{\D}_{\rd}(u).
\]
Additionally, note by~\eqref{eq:re1} that
\[
\frac{\nPa_{\rd}(u)}{\pa'_{\rd}} \geq \frac{(1 - \er/8)\beta|\Pa_{\rd-1}(u)|}{(1 - \er/8)\beta\pa'_{\rd-1}}=
\frac{|\Pa_{\rd-1}(u)|}{\pa'_{\rd-1}},
\]
and this, combined with $\nPa_{\rd}(u) \supseteq \Pa_{\rd}(u)$, gives
\begin{equation}\label{eq:lamh}
\hat{\lambda}_{\rd}(u)\geq \lambda_{\rd}(u),\ \lambda_{\rd-1}(u).
\end{equation}
Finally, we obtain that
\begin{align*}
\D_{\rd}(u) &\leq \hat{\D}_{\rd}(u)=\hat{\lambda}_{\rd}(u)\hat{\Lambda}_{\rd}(u) + (1 - \hat{\lambda}_{\rd}(u))2t_{\rd} & \\
& \leq \lambda_{\rd-1}(u)\hat{\Lambda}_{\rd}(u) + (1 - \lambda_{\rd-1}(u))2t_{\rd} & \text{by}~\eqref{eq:Lamh2}~\&~\eqref{eq:lamh}\\
& \leq \lambda_{\rd-1}(u)\left(\alpha'_{\rd}\beta^{k-1}\Lambda_{\rd-1}(u) + \er(1+ \er)^{\rd-1}t_{\rd}\right) + (1 - \lambda_{\rd-1}(u))2t_{\rd} & \text{by}~\eqref{eq:Lamh1}\\
& \leq \alpha'_{\rd}\beta^{k-1}\left(\lambda_{\rd-1}(u)\Lambda_{\rd-1}(u) + (1 - \lambda_{\rd-1}(u))2t_{\rd-1}\right)+\er(1+ \er)^{\rd-1}t_{\rd} & t_{\rd}=\alpha'_{\rd}\beta^{k-1}t_{\rd-1}\\
&=\alpha'_{\rd}\beta^{k-1}\D_{\rd-1}(u)+\er(1+ \er)^{\rd-1}t_{\rd}\leq \alpha'_{\rd}\beta^{k-1}t'_{\rd-1} +\er(1+ \er)^{\rd-1}t_{\rd} & \text{defn. of }\D_{\rd-1}(u)~\&~\eqref{indas1}\\
&\leq (1 + \er)^{\rd-1}t_{\rd}+\er(1+ \er)^{\rd-1}t_{\rd}=t'_{\rd} & t'_{\rd}=(1 + \er)^{\rd}t_{\rd},
\end{align*}
which completes the proof.
\end{proof}

\section{Proof of Lemma~\ref{lem:expe}: Expectation of average $c$-degrees}\label{sec:expe}
Throughout this section, in addition to the assumptions stated at the beginning of Section~\ref{sec:mainlemma}, we further fix an arbitrary choice of $u\in U_i$.
Recall from \eqref{def:sumde} that 
\[
\X_u= \sum_{\ell=2}^k(\cde\pa_{\rd})^{k-\ell}\left(
\X_{u,\ell} + \sum_{s=\ell+1}^{k}\X_{u,\ell, s}\right).
\]
When the underlying vertex is clear, we simply write $\X_{\ell}$ for $\X_{u,\ell}$ and $\X_{\ell, s}$ for $\X_{u, \ell, s}$.
Our plan is to establish an upper bound for each $\E\left[\X_{\ell}\right]$ and $\E\left[\X_{\ell, s}\right]$, and then apply linearity of expectation to prove Lemma~\ref{lem:expe}. 

\subsection{Dependency lemma}\label{sec:dep}
Before we proceed to expectations, we first address some dependency issue arising from the analysis of the algorithm, which are needed for later calculations. As mentioned in Section~\ref{sec:overview}, this is the only place in the entire proof where triangle-freeness is required.
We begin by showing that the algorithm always maintains the triangle-freeness of hypergraphs.

\begin{prop}\label{trianglefree}
For every $0\leq \rd \leq T$ and color $c$, the hypergraph $\bigcup_{\ell=2}^{k}\Hc^{\rd}_{\ell, c}$ is triangle-free.
\end{prop}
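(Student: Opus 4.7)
I will prove this by induction on $\rd$. For the base case $\rd=0$, each $\Hc^0_{\ell, c}$ is by definition a subhypergraph of $\Hc$ (the edges of size $\ell$ all of whose vertices have $c$ in their palette), so $\bigcup_\ell \Hc^0_{\ell, c} \subseteq \Hc$ is triangle-free by hypothesis. Assume that $\bigcup_\ell \Hc^{\rd-1}_{\ell, c}$ is triangle-free for every color $c$; I want to show the same at step $\rd$. Tracing the construction in Section~\ref{sec:algo}, the relevant chain is
\[
\bigcup_{\ell} \Hc^{\rd-1}_{\ell,c} \;\rightsquigarrow\; \bigcup_{\ell} \hat{\Hc}^{\rd}_{\ell,c} \;\supseteq\; \bigcup_{\ell}\Fc_{\ell,c} \;\rightsquigarrow\; \Fc_c \;\supseteq\; \bigcup_{\ell}\Hc^{\rd}_{\ell,c}.
\]
The second containment and the final containment lose edges only, and the step $\bigcup_\ell \Fc_{\ell,c} \rightsquigarrow \Fc_c$ is an $f$-reduction for a strictly decreasing $f$ with $f(\ell,\ell)=1$, so by Proposition~\ref{prop:redu} it preserves triangle-freeness. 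Therefore it suffices to show that $\bigcup_\ell \hat{\Hc}^{\rd}_{\ell, c}$ is triangle-free.

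The key step — and the main obstacle — is that $\hat{\Hc}^{\rd}_{\ell,c}$ contains edges of two types: (i) surviving edges of $\Hc^{\rd-1}_{\ell,c}$, and (ii) proper subsets $S$ of some larger $e \in \Hc^{\rd-1}_{s,c}$ (with $s>\ell$) obtained by removing vertices of $e\setminus S$ that were permanently colored by $c$. Suppose for contradiction that there is a triangle $\{e,f,g\}$ in $\bigcup_\ell \hat{\Hc}^{\rd}_{\ell, c}$ with vertices $u,v,w$. I will lift each of $e,f,g$ to its ``parent'' edge $e',f',g'\in \bigcup_\ell \Hc^{\rd-1}_{\ell,c}$, taking the parent to be the edge itself in case (i) and the larger edge in case (ii). Then $\{u,v\}\subseteq e\subseteq e'$, $\{v,w\}\subseteq f\subseteq f'$, $\{w,u\}\subseteq g\subseteq g'$.

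The heart of the argument is that, crucially, each of $u,v,w$ lies in $U_\rd$, hence is uncolored. Consider $u$: since $\{u,v,w\}\cap e\cap f\cap g=\emptyset$ and $u\in e\cap g$, we have $u\notin f$; and if $u\in f'\setminus f$, then by the definition of a type (ii) edge $u$ would have been permanently colored by $c$, contradicting $u\in U_\rd$. Hence $u\notin f'$. By the symmetric argument, $v\notin g'$ and $w\notin e'$, which both gives $\{u,v,w\}\cap e'\cap f'\cap g'=\emptyset$ and forces $e',f',g'$ to be pairwise distinct (if, say, $e'=f'$ then $u\in e'\setminus f'$ would be impossible). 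Therefore $\{e',f',g'\}$ is a triangle in $\bigcup_\ell \Hc^{\rd-1}_{\ell,c}$, contradicting the inductive hypothesis. This establishes triangle-freeness of $\bigcup_\ell \hat{\Hc}^{\rd}_{\ell,c}$, and the preceding paragraph then gives triangle-freeness of $\bigcup_\ell \Hc^{\rd}_{\ell,c}$, completing the induction.
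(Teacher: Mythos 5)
Your proof is correct and follows essentially the same strategy as the paper's: induct on $\rd$, use Proposition~\ref{prop:redu} to reduce to triangle-freeness of $\bigcup_\ell \hat{\Hc}^{\rd}_{\ell,c}$, and then lift a putative triangle back to $\bigcup_\ell \Hc^{\rd-1}_{\ell,c}$ by observing that the vertices $u,v,w$ lie in $U_\rd$ while any added vertices in a parent edge were colored by $c$. The only difference is presentational: the paper simplifies by assuming exactly one of $e,f,g$ is new and remarks that the other cases are similar, whereas you lift all three edges to their parents simultaneously and verify $u\notin f'$, $v\notin g'$, $w\notin e'$ uniformly, which handles every case in one pass and is if anything slightly cleaner.
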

\begin{proof}
We prove it by induction on $\rd$. The case $\rd=0$ is trivial as our input hypergraph $\Hc$ is triangle-free.
Suppose that $\bigcup_{\ell=2}^{k}\Hc^{\rd-1}_{\ell, c}$ is triangle-free for all color $c$. 

Assume by contradiction that $\bigcup_{\ell=2}^{k}\Hc^{\rd}_{\ell, c}$ is not triangle-free for some $c$. 
By the mechanics of the algorithm and Proposition~\ref{prop:redu}, a triangle cannot be created during the filtering process (step 7) or the codegree reduction process (step 8) of the algorithm. 
Therefore, there must be a triangle $\{e, f, g\}\subseteq \bigcup_{\ell=2}^{k}\hat{\Hc}^{\rd}_{\ell, c}$ such that $\{u, v\}\subseteq e$, $\{v, w\}\subseteq f$, $\{w, u\}\subseteq g$ and $\{u, v, w\}\cap e\cap f\cap g=\emptyset$. In particular, $w\notin e$.
By the induction hypothesis, at least one of these three edges is not in $\bigcup_{\ell=2}^{k}\Hc^{\rd-1}_{\ell, c}$. To simplify the discussion, we further assume that $e$ is the only edge in $\bigcup_{\ell=2}^{k}\hat{\Hc}^{\rd}_{\ell, c}$ but not in $\bigcup_{\ell=2}^{k}\Hc^{\rd-1}_{\ell, c}$; other cases will follow from a similar argument. Let $s:=|e|$.

By the definition of $\hat{\Hc}^{\rd}_{\ell, c}$ , this new edge $e$ must be created in the following situation:
during step 5 of the 
$\rd$-th iteration, there existed an edge $e'\in \bigcup_{\ell=2}^{k}\Hc^{\rd-1}_{\ell, c}$ such that all vertices in $e'\setminus e$ are colored by $c$. 
Note that $w\notin e'\setminus e$, and therefore not in $e'$, since $w$ remains uncolored at the end of round $\rd$. Then $\{e', f, g\}$ forms triangle in $\bigcup_{\ell=2}^{k}\Hc^{\rd-1}_{\ell, c}$, which contradicts our induction hypothesis.
\end{proof}

Intuitively, to bound the expectation of $\hat{d}_{\rd}(u, c)$, we need an upper bound on the probability $\Prb(c\in \nPa_{\rd}(v) \text{ for all } v\in e)$ for some hyperedge $e$. 
Note that if the events $\{c\in  \nPa_{\rd}(v)\}$ are mutually independent, then we can easily do it. 
However, we do not have such independence; in fact, these events are positively correlated, as in the hypergraph setup, vertices $v, u\in e$ can share common neighbors within $e$, even with the triangle-freeness condition.
Fortunately, the next lemma shows that although these events are not independent, they are `almost independent'.

\begin{lemma}[Dependency lemma]\label{lem:dep}
Let $1\leq \rd\leq T$, and $c$ be an arbitrary color. 
For any $e_0\in \cup_{\ell=2}^k\Hc^{\rd-1}_{\ell, c}$ and $S\subseteq e_0$, 
\begin{equation}
\Prb(\forall v\in S,~c\in \nPa_{\rd}(v)) \leq (1 + \er_0)\prod_{v\in S}\Prb(c\in \nPa_{\rd}(v)) = (1 + \er_0)\beta^{|S|},
\end{equation}
where $\er_0:=\beta\pi_{\rd}\pa_{\rd-1}/60$.
\end{lemma}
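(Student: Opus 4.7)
\medskip

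\noindent\textbf{Plan.} My strategy is to first decouple the $\ke$ variables from the activation variables, then reduce the joint non-loss probability to a single application of Janson's inequality, and finally use the triangle-free hypothesis (via Proposition~\ref{trianglefree}) to control the correlation term $\Delta^*$. By construction, the Bernoulli variables $\{\ke^{\rd}_{v,c}\}_{v \in S}$ are mutually independent (with mean $\beta/\nlo^{\rd}_{v,c}$) and also independent of the activation variables that determine $\Lo_{\rd}$. Hence
\[
\Prb\bigl(c \in \nPa_{\rd}(v) \text{ for all } v \in S\bigr)
 \;=\; \prod_{v \in S}\frac{\beta}{\nlo^{\rd}_{v,c}} \cdot \Prb\Bigl(\bigcap_{v \in S}\{c \notin \Lo_{\rd}(v)\}\Bigr),
\]
and the lemma reduces to showing $\Prb(\bigcap_{v \in S}\{c \notin \Lo_{\rd}(v)\}) \leq (1+\er_0)\prod_{v \in S}\nlo^{\rd}_{v,c}$.

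\medskip

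\noindent\textbf{Janson plus FKG.} For each $v \in S$ and each $e \in \cup_\ell \Hc^{\rd-1}_{\ell,c}$ with $v \in e$, let $\B_{e,v}$ be the ``bad'' (monotone increasing) event that $\ac^{\rd}_{w,c}=1$ for every $w \in e \setminus \{v\}$, so $\{c \notin \Lo_{\rd}(v)\} = \bigcap_{e \ni v}\overline{\B_{e,v}}$. Janson's inequality (Theorem~\ref{Janson}), applied to the full family $I = \{(e,v)\}$ in the activation probability space, yields
\[
\Prb\Bigl(\bigcap_{(e,v) \in I}\overline{\B_{e,v}}\Bigr) \;\leq\; M \cdot \exp\Bigl(\tfrac{\Delta^*}{2(1-\varepsilon_{\max})}\Bigr),
\]
where $M = \prod_{(e,v) \in I}\Prb(\overline{\B_{e,v}})$ and $\Delta^*=\sum_{(e,v)\sim(f,u)}\Prb(\B_{e,v}\wedge\B_{f,u})$; the FKG inequality (Theorem~\ref{FKG}) applied one vertex at a time gives $\nlo^{\rd}_{v,c} \geq \prod_{e \ni v}\Prb(\overline{\B_{e,v}})$, hence $M \leq \prod_{v \in S}\nlo^{\rd}_{v,c}$. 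Since $\max \Prb(\B_{e,v}) \leq \pi_{\rd} \ll 1$, it suffices to prove the clean bound $\Delta^* \leq 2\er_0/e$, which by the definition $\er_0 = \beta \pi_\rd p_{\rd-1}/60$ amounts to $\Delta^* = O(\pi_{\rd} p_{\rd-1})$.

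\medskip

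\noindent\textbf{Bounding $\Delta^*$.} Write $\Delta^* = \Delta^*_{\mathrm{diag}} + \Delta^*_{\mathrm{off}}$ according to whether $v=u$ or $v\neq u$ in the correlated pair. For the diagonal contribution, two edges $e,f\ni v$ sharing a vertex $w\neq v$ give $\Prb(\B_{e,v}\wedge\B_{f,v}) \leq \pi_{\rd}^{|e|+|f|-2-|(e\cap f)\setminus\{v\}|}$; summing using the degree bound~\eqref{indas2} on $v$ and the codegree bound~\eqref{indas3} to count the second edge through $\{v,w\}$ yields a $\Delta^*_{\mathrm{diag}}$ of order $\pi_{\rd}\cdot \sum_\ell d^{\rd-1}_\ell(v,c)\pi_{\rd}^{\ell-1}\cdot(\cde\pa_{\rd-1}) \ll \pi_\rd p_{\rd-1}$ (one factor of $\pi_\rd p_{\rd-1}$ more than in the standard single-vertex Janson calculation that gave $\nlo^{\rd}_{v,c}\geq 1-\cac$).

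\medskip

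\noindent\textbf{The off-diagonal terms via triangle-freeness.} This is the heart of the proof and the main obstacle. Fix $v\neq u$ in $S\subseteq e_0$, an edge $e\ni v$, an edge $f\ni u$, and a shared vertex $w\in(e\setminus\{v\})\cap(f\setminus\{u\})$; note $w\notin\{u,v\}$ automatically. Then the triple $(e_0, e, f)$ together with $(u,v,w)$ satisfies exactly the incidence pattern $\{u,v\}\subseteq e_0$, $\{v,w\}\subseteq e$, $\{w,u\}\subseteq f$ from Definition~\ref{def:tri}. By Proposition~\ref{trianglefree}, this cannot be a triangle, which forces one of the following ``degeneracies'': (i) two of $e_0, e, f$ coincide, or (ii) some vertex in $\{u,v,w\}$ lies in all three of $e_0, e, f$, i.e., $u\in e$ or $v\in f$ or $w\in e_0$. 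I will enumerate each of these subcases: in every subcase there is one extra forced incidence (one vertex belongs to two prescribed sets), so the count of admissible $(e,f,w)$ triples is smaller than the naive $d\cdot\delta$ product by a factor of $1/(\cde\pa_{\rd-1})$ -- precisely the factor one needs, combined with an extra $\pi_\rd$-power, to bring the contribution down to $O(\pi_\rd p_{\rd-1})\cdot\Delta^*_{\mathrm{diag}}$. The bookkeeping is aided by~\eqref{reduce:de}, which rules out $e\subsetneq f$ and prevents double counting across uniformities. Summing the diagonal and off-diagonal contributions yields $\Delta^* = O(\pi_{\rd} p_{\rd-1}) \leq 2\er_0/e$, which closes the argument. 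The main obstacle is the case-by-case codegree accounting; the structural input (triangle-freeness) is only needed to cut the off-diagonal sum, as stressed at the end of Section~\ref{sec:intro}.
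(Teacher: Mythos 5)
Your high-level plan is the same as the paper's: use the independence of the $\ke^{\rd}_{v,c}$'s to reduce to the joint non-loss event, apply Janson's inequality (together with Janson's lower bound / Harris for the factorization of $M$), and then use triangle-freeness plus the no-nesting assumption~\eqref{reduce:de} to control $\Delta^*$. This is exactly what the paper does. The difference is in how the $\Delta^*$ estimate is organized, and here your execution has a gap. The paper proves a single structural claim (its Claim~\ref{claim:dep}): for any correlated pair $(e,v)\sim(e',v')$ in the index set, one has $|e'| > |(e-v)\cap(e'-v')|+1$, i.e.\ $e'$ cannot equal $Q\cup\{v'\}$ where $Q=(e-v)\cap(e'-v')$; the proof is by contradiction (if $e'=Q\cup\{v'\}$, then either some $w\in Q$ avoids $e_0$ and $\{e_0,e,e'\}$ is a triangle, or $Q\subseteq e_0$ and then $e'\subseteq e_0$ contradicts~\eqref{reduce:de}). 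This one fact removes the dominant term $s=q+1$ from the sum $\sum_{s\geq q+1}(\cde\pa_{\rd-1}\pi_\rd)^{s-q-1}$, turning it from $O(1)$ into $O(\cde\pa_{\rd-1}\pi_\rd)$, uniformly for both diagonal ($v=v'$) and off-diagonal ($v\neq v'$) pairs.

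Your off-diagonal bookkeeping does not reliably deliver this. The assertion that ``in every subcase there is one extra forced incidence, so the count of admissible $(e,f,w)$ triples is smaller by a factor of $1/(\cde\pa_{\rd-1})$, combined with an extra $\pi_\rd$-power'' is not accurate subcase by subcase. In the subcase $w\in e_0$ there is no genuine count reduction at all (if all of $Q$ lies in $e_0$ then $e'=Q\cup\{v'\}\subseteq e_0$ is outright forbidden by~\eqref{reduce:de}, not merely rare; if some $w\in Q$ is outside $e_0$ you are back to the triangle contradiction). In the subcase $v\in f$, the extra incidence gains you a codegree factor but the promised ``extra $\pi_\rd$-power'' does not appear, since $v\notin e-v$ plays no role in the conditional activation count. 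Also, the correct saving from excluding $|e'|=q+1$ is a single factor of $\cde\pa_{\rd-1}\pi_\rd$ (both in the numerator), not a factor of $1/(\cde\pa_{\rd-1})$ together with a separate $\pi_\rd$. None of this kills your overall strategy --- the ingredients are all present --- but you should replace the case enumeration of ``ways a triple fails to be a triangle'' with the single dichotomy the paper uses: either $e'\supsetneq Q\cup\{v'\}$ (handled by codegree/activation bookkeeping) or $e'=Q\cup\{v'\}$ (impossible, by triangle-freeness together with the no-nesting normalization~\eqref{reduce:de}).
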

\begin{proof}
By~\eqref{eq:ncolor} and the independence of the $\ke^{i}_{v, c}$ variables, it is sufficient to prove that
\begin{equation}
\Prb(\forall v\in S,~c\notin \Lo_{\rd}(v)) \leq (1 + \er_0)\prod_{v\in S}\Prb(c\notin \Lo_{\rd}(v)).
\end{equation}

For a vertex $v\in S$, and an edge $e\in\bigcup_{\ell=2}^{k}\Hc^{\rd-1}_{\ell, c}$, denote by $\A_{e, v}$ the event that 
\[\text{$\ac^{\rd}_{u,c}=1$ for all $u\in e-v$.}\]
By the independence of the $\ac^{i}_{v, c}$ variables, we have $\Prb(\A_{e, v})=\pi_{\rd}^{|e|-1}$.
For each $v\in S\subseteq e_0$, let $\Ic_v=\left\{(e, v):\ v\in e\in\bigcup_{\ell=2}^{k}\Hc^{\rd-1}_{\ell, c}\right\}$ be an index set.
Note that by~\eqref{reduce:de},
\begin{equation}\label{dep:assum}
\text{there is no two distinct pairs $(e, v), (e', v')$ in the index sets such that $e\subsetneq e'$}.
\end{equation}
Then the definition of $\Lo_i(v)$ gives that for every $v\in S$,
\[
\Prb(c\notin \Lo_{\rd}(v)) = \Prb\left(\bigwedge_{(e,v)\in\Ic_v}\overline{\A_{e,v}}\right),
\]
and
\[
\Prb(\forall v\in S,~c\notin \Lo_{\rd}(v)) = \Prb\left(\bigwedge_{v\in S}\bigwedge_{(e,v)\in\Ic_v}\overline{\A_{e,v}}\right).
\]

For any two pairs $(e,v), (e', v')\in \bigcup_{v\in S}\Ic_v$, write $(e,v)\sim (e', v')$, if $(e, v)\neq (e', v')$ and $(e-v)\cap (e' -v')\neq \emptyset$. We will need the following claim.
\begin{claim}\label{claim:dep}
If $(e,v)\sim (e', v')$, then $|e'|>|(e-v)\cap (e' -v')|+1$.
\end{claim}
\begin{proof}
Assume by contradiction that $|e'|=|(e-v)\cap (e' -v')|+1$. Then $e'=(e-v)\cap (e' -v')+v'$, or equivalently $(e' -v') \subseteq (e-v)$. 
Observe that $v\neq v'$ and $e\neq e_0$; otherwise, we would either have $(e, v)=(e', v')$, or $e'\subsetneq e$, which contradicts~\eqref{dep:assum}.
This further indicates that $v\notin e'$, since otherwise it would contradict $(e' -v') \subseteq (e-v)$.
Then $e'$ cannot be equal to either $e_0$ or $e$, because $v\notin e'$ but $v\in e, e_0$.

Now we have $v\neq v'$ and $e, e', e_0$ are three distinct edges in $\bigcup_{\ell=2}^{k}\Hc^{\rd-1}_{\ell, c}$.
Note that by~\eqref{reduce:de}, $e'$ cannot be fully contained in $e_0$, and then the same applies to $(e-v)\cap (e' -v')=(e' -v')$.
Thus, there exists a vertex $w\in (e-v)\cap (e' -v')$ such that $w\notin e_0$. 
Then $\{e_0, e, e'\}$ forms a triangle as $\{v, v'\}\subseteq e_0$, $\{v, w\}\subseteq e$, $\{v', w\}\subseteq e'$, and $\{v, v', w\}\cap e_0\cap e\cap e'=\emptyset$.
This contradicts Proposition~\ref{trianglefree}.
\end{proof}

We now apply Theorem~\ref{Janson} (Janson's Inequality) on $\Prb\left(\bigwedge_{v\in S}\bigwedge_{(e,v)\in\Ic_v}\overline{\A_{e,v}}\right)$.
We first set
\[
M:= \prod_{v\in S}\prod_{(e,v)\in\Ic_v}\Prb\left(\overline{\A_{e,v}}\right),
\]
and note that 
\begin{equation}\label{dep:M}
M=\prod_{v\in S}\left(\prod_{(e,v)\in\Ic_v}\Prb\left(\overline{\A_{e,v}}\right)\right) 
\leq \prod_{v\in S}\Prb\left(\bigwedge_{(e,v)\in\Ic_v}\overline{\A_{e,v}}\right) 
=\prod_{v\in S}\Prb(c\notin \Lo_{\rd}(v)).
\end{equation}
Next, we define
\[
\begin{aligned}
\mu:&=\sum_{v\in S}\sum_{(e, v)\in \Ic_v}\Prb(\A_{e,v})=\sum_{v\in S}\sum_{(e, v)\in \Ic_v}\pi_{\rd}^{|e|-1} & \quad \text{defn. of $\A_{e,v}$}\\
&= \sum_{v\in S}\sum_{\ell=2}^{k}d^{\rd-1}_{\ell}(v, c)\pi_{\rd}^{\ell-1}
\leq \frac{2t_{i-1}}{(\cde\pa_{\rd-1})^{k-1}}\sum_{v\in S}\sum_{\ell=2}^{k}(\cde\pa_{\rd-1}\pi_{\rd})^{\ell-1}           & \text{Proposition~\ref{prop:output}}\\
&=\frac{2t_{i-1}}{(\cde\pa_{\rd-1})^{k-1}}\cdot 2k(\cde\pa_{\rd-1}\pi_{\rd})& \text{by~\eqref{indas4}} \\
&=k\cac.  & \pi_{\rd}=\cac\frac{(\cde\pa_{\rd-1})^{k-2}}{4t_{\rd-1}}
\end{aligned}
\]
Then we have
\[
\begin{aligned}
\Delta^*:&=\sum_{(e, v)}\sum_{(e', v')\sim (e, v)}\Prb(\A_{e, v}\wedge\A_{e',v'})=\sum_{(e, v)}\Prb(\A_{e, v})\sum_{(e', v')\sim (e, v)}\Prb(\A_{e',v'}\mid \A_{e, v}) &\\
&= \sum_{(e, v)}\Prb(\A_{e, v})\sum_{(e', v')\sim (e, v)}\pi_{\rd}^{|(e'-v')-(e-v)|} & \text{defn. of $\A_{e,v}$}\\
&\leq \sum_{(e, v)}\Prb(\A_{e, v})\sum_{q=1}^{|e-v|}\sum_{Q\in\binom{e-v}{q}}\sum_{\substack{(e', v') s.t.\\ v'\notin Q,\ Q+v'\subseteq e' \\ |e'|>q+1}}\pi_{\rd}^{|e'|-q-1} & \text{Claim~\ref{claim:dep}}\\
&\leq \sum_{(e, v)}\Prb(\A_{e, v})\sum_{q=1}^{|e-v|}\sum_{Q\in\binom{e-v}{q}}|S|\sum_{s=q+2}^{k}(\cde\pa_{\rd-1})^{s-q-1}\cdot \pi_{\rd}^{s-q-1} & \text{Proposition~\ref{prop:output}}\\
&\leq\mu\cdot 2^{k-1}\cdot 2k(\cde\pa_{\rd-1}\pi_{\rd})& \text{by~\eqref{indas4}}\\
&\leq 2^{k}k^2(\cac\cde\pa_{\rd-1}\pi_{\rd})\leq \beta\pi_{\rd}\pa_{\rd-1}/120. & \text{defn. of $\cde, \cac, \beta$}
\end{aligned}
\]
Note by \eqref{pibound} that for each $(e, v)$, we have $\Prb(\A_{e, v})\leq \pi_{\rd} \ll 1/2$.
Then applying Theorem~\ref{Janson} and~\eqref{dep:M}, we obtain that
\[
\Prb\left(\bigwedge_{v\in S}\bigwedge_{(e,v)\in\Ic_v}\overline{\A_{e,v}}\right) \leq M\exp(\Delta^*) \leq M(1 + \beta\pi_{\rd}\pa_{\rd-1}/60)\le (1 + \er_0)\prod_{v\in S}\Prb(c\notin \Lo_{\rd}(v)),
\]
where the second inequality uses $\beta\pi_{\rd}\pa_{\rd-1}/120\le 1/2$, following from~\eqref{indas4}. This completes the proof.
\end{proof}

\subsection{Expectations of $\X_{\ell}$ and $\X_{\ell, s}$}
We begin with an additional lemma that establishes some exceptional outcomes.
\begin{lemma}\label{lem:exceout1}
Denote by $\Omega^*_1:=\Omega^*_1(u)$ the set of events where there exists an vertex $v\in N^2_{i-1}(u)$ such that $|\nPa_{\rd}(v)|\leq (1 - \er/8)\beta|\Pa_{\rd-1}(v)|$. Then
\[
\Prb(\Omega^*_1) \leq \exp\left(-\Omega(\log^2\Delta)\right).
\]
\end{lemma}

\begin{proof}
Recall from~\eqref{bound:nbd} that $|N^2_{\rd-1}(u)| \le (k^2\Delta)^2$.
The proof then follows immediately from~Lemma~\ref{lem:tempale} and the union bound.
\end{proof}

\begin{lemma}\label{lem:exp:Xell}
For every $2\leq \ell \leq k$, $\E[\X_{\ell}] \leq  (1 + \er_0)\alpha_{\rd}\beta^{\ell}\sum_{c\in \Pa_{\rd-1}(u)}d^{\rd-1}_{\ell}(u, c).$
\end{lemma}
\begin{proof}
Fix an arbitrary $2\leq \ell \leq k$, and recall from~\eqref{def:keedge} that
\[
\X_{\ell}=\sum_{c\in\Pa_{\rd-1}(u)}\sum_{\substack{e\in\Hc^{\rd-1}_{\ell, c}\\ u\in e}} \I\left[\left(\forall x\in e\setminus\{u\},~x\in U_{\rd}\right) \wedge \left(\forall y\in e,~c\in\nPa_{\rd}(y)\right)\right],
\]
We start with the following claim.
\begin{claim}\label{claim:mono1}
Let $c\in\Pa_{\rd-1}(u)$, and  $e$ be an edge in $\Hc^{\rd-1}_{\ell, c}$ containing $u$. Then for any vertex $x\in e-\{u\}$,
\[
\Prb(x\in U_{\rd} \mid  \forall y\in e,~c\in\nPa_{\rd}(y))\leq \alpha_{\rd}.
\]
\end{claim}
\begin{proof}
Recall from Section~\ref{sec:alg:itera} (step 4) that a vertex $x$ remains uncolored, i.e., in $U_i$, if and only if none of the colors from $\Pa_{\rd-1}(x)$ survive in $\nPa_{\rd}(x)\cap \Ac_{\rd}(x)$. This implies that
\[
\begin{split}
\Prb\left(x\in U_{\rd} \mid \forall y\in e,~c\in\nPa_{\rd}(y)\right)
&=\Prb\left(\nPa_{\rd}(x)\cap\Ac_{\rd}(x)=\emptyset \mid \forall y\in e,~c\in\nPa_{\rd}(y)\right)\\
&\leq\Prb\left(\nPa_{\rd}(x)\cap\Ac_{\rd}(x)-\{c\}=\emptyset \mid \forall y\in e,~c\in\nPa_{\rd}(y)\right)\\
&=\Prb\left(\nPa_{\rd}(x)\cap\Ac_{\rd}(x)-\{c\}=\emptyset\right)\\
&=\Prb\left(\forall c'\in\nPa_{\rd}(x)-\{c\},~\ac^{\rd}_{x, c'}=0\right),
\end{split}
\]
where the second equality follows from the independence of the random variables $\ac$'s and $\ke$'s among different colors.
Note by Lemma~\ref{lem:exceout1} that for any outcome $\omega\in\Omega\setminus\Omega_1^*$, the following holds:
\[
|\nPa_{\rd}(x)|\geq (1 - \er/8)\beta|\Pa_{\rd-1}(x)| \quad \text{for every $x\in N^2_{i-1}(u)$}.
\]
This, together with Lemma~\ref{lem:prb}, shows that
\[
\begin{split}
\Prb\left(x\in U_{\rd} \mid \forall y\in e,~c\in\nPa_{\rd}(y)\right)
&\leq \Prb\left(\forall c'\in\nPa_{\rd}(x)-\{c\},~\ac^{\rd}_{x, c'}=0 \mid \Omega\setminus\Omega_1^*\right) + \Prb\left(\Omega_1^*\right)\\
&\leq (1 - \pi_{\rd})^{(1 - \er/8)\beta|\Pa_{\rd-1}(v)|-1} +  \exp\left(-\Omega(\log^2\Delta)\right) \\
&\leq (1 - \pi_{\rd})^{(1 -  (1 + \er)^{\rd-1}/2)(1 - \er/8)^{\rd}\beta\pa_{\rd-1}-1} +  \exp\left(-\Omega(\log^2\Delta)\right) \\
&\leq(1 - \beta\pi_{\rd}\pa_{\rd-1}/4)+  \exp\left(-\Omega(\log^2\Delta)\right)\leq(1 - \beta\pi_{\rd}\pa_{\rd-1}/5)= \alpha_{\rd},
\end{split}
\]
where the third inequality uses Proposition~\ref{prop:pbou} and assumption~\eqref{indas1}, and the last inequality follows from $\pi_{\rd}\pa_{\rd-1}=\Omega(1/\log\Delta)$ (see~\eqref{indas4}).
\end{proof}

Claim~\ref{claim:mono1} and Lemma~\ref{lem:dep} together show that for any $c\in\Pa_{\rd-1}(u)$ and $e\in\Hc^{\rd-1}_{\ell, c}$ containing $u$,
\[
\begin{split}
\Prb\left(\left(\forall x\in e\setminus\{u\},~x\in U_{\rd}\right) \wedge \left(\forall y\in e,~c\in\nPa_{\rd}(y)\right)\right)
&\leq \Prb\left(\left(x'\in U_{\rd}\right) \wedge \left(\forall y\in e,~c\in\nPa_{\rd}(y)\right)\right)
\\
&\leq \alpha_{\rd}\cdot(1 + \er_0)\beta^{|e|}= (1 + \er_0)\alpha_{\rd}\beta^{\ell},
\end{split}
\]
where $x'$ is an arbitrary vertex in $e \setminus {u}$.
Then by linearity of expectation, we obtain that
\[
\E[\X_{\ell}] \le\sum_{c\in\Pa_{\rd-1}(u)}\sum_{\substack{e\in\Hc^{\rd-1}_{\ell, c}\\ u\in e}} (1 + \er_0)\alpha_{\rd}\beta^{\ell}
=  (1 + \er_0)\alpha_{\rd}\beta^{\ell}\sum_{c\in \Pa_{\rd-1}(u)}d^{\rd-1}_{\ell}(u, c).
\]
\end{proof}

\begin{lemma}\label{lem:exp:Xsell}
For every $2\leq \ell <s\leq k$, $\E[\X_{\ell, s}] \leq (1 + \er_0)\binom{s-1}{s-\ell}\pi_{\rd}^{s- \ell}\beta^{s}\sum_{c\in\Pa_{\rd-1}(u)}d^{\rd-1}_{s}(u, c).$
\end{lemma}
\begin{proof}
Fix arbitrary $2\leq \ell <s\leq k$, and recall from~\eqref{def:inedge} that
\[
\X_{\ell, s}:=\sum_{c\in\Pa_{\rd-1}(u)}\sum_{\substack{e\in \Hc^{\rd-1}_{s, c}\\ u\in e}}\sum_{Q\in\binom{e-u}{s-\ell}}
\I\left[\left(\forall x\in Q,~c\in\Ac_{\rd}(x) \right)\wedge\left(\forall y\in e,~c\in \nPa_{\rd}(y)\right)\right].
\]
We first prove the following claim.
\begin{claim} \label{claim:expe2}
Let $c\in\Pa_{\rd-1}(u)$, $e$ be an edge in $\Hc^{\rd-1}_{s, c}$ containing $u$, and $Q$ be a set in $\binom{e-u}{s-\ell}$. Then
\[
\Prb\left(\left(\forall x\in Q,~c\in\Ac_{\rd}(x) \right)\wedge\left(\forall y\in e,~c\in \nPa_{\rd}(y)\right)\right)
\leq \Prb\left(\forall x\in Q,~c\in\Ac_{\rd}(x)\right)\cdot \Prb\left(\forall y\in e,~c\in \nPa_{\rd}(y)\right).
\]
\end{claim}
\begin{proof}
Let \[\mathcal{N}:=\left\{\I[\ac^{\rd}_{v, c}=1]:~v\in N^2_{i-1}(u)\right\}\cup\left\{\I[\ke^{\rd}_{v, c}=0]:~v\in N_{i-1}(u)\right\}.\]
Recall from Section~\ref{sec:alg:itera} (step 3) that $c$ is in some $\nPa_{\rd}(y)$, if $\ke^{\rd}_{y, c}=1$, and for every $e^*\in \bigcup_{\ell=2}^{k}\Hc^{\rd-1}_{\ell, c}$ containing $y$, there exists a vertex $z\in e^*\setminus \{y\}$ such that $\ac^{\rd}_{z, c}=0$. 
This suggests that the event $\left\{\forall y\in e,~c\in \nPa_{\rd}(y)\right\}$ can be viewed as a family of subsets of $\mathcal{N}$. Moreover, it is monotone decreasing, as $c$ is more likely to survive in a palette if there are more vertices with $\ac^{\rd}_{v, c} = 0$ or $\ke^{\rd}_{v, c} = 1$ (alternatively, fewer vertices with $\ac^{\rd}_{v, c} = 1$ or $\ke^{\rd}_{v, c} = 0$).
One the other hand, the event $\left\{\forall x\in Q,~c\in\Ac_{\rd}(x)\right\}$ is a monotone increasing family of subsets of $\mathcal{N}$.
The claim then follows directly from Theorem~\ref{FKG}.
\end{proof}

Note, by the definition of $A_{\rd}(x)$ (see Section~\ref{sec:alg:itera} (step 1)) and the independence of the $\ac^{\rd}_{x, c}$'s, that $\Prb\left(\forall x\in Q,~c\in\Ac_{\rd}(x) \right)=\pi_{\rd}^{|Q|}$.
This, together with Claim~\ref{claim:expe2} and Lemma~\ref{lem:dep}, shows that
\[
 \Prb\left(\left(\forall x\in Q,~c\in\Ac_{\rd}(x) \right)\wedge\left(\forall y\in e,~c\in \nPa_{\rd}(y)\right)\right)  
\leq (1 + \er_0)\pi_{\rd}^{|Q|}\beta^{|e|}.
\]
Applying linearity of expectation, we obtain that
\begin{equation}\label{exp:code}
\E[\X_{\ell, s}] 
\le \sum_{c\in\Pa_{\rd-1}(u)}\sum_{\substack{e\in \Hc^{\rd-1}_{s, c}\\ u\in e}}\sum_{Q\in\binom{e-u}{s-\ell}}(1 + \er_0)\pi_{\rd}^{|Q|}\beta^{|e|}
= (1 + \er_0)\binom{s-1}{s-\ell}\pi_{\rd}^{s- \ell}\beta^{s}\sum_{c\in\Pa_{\rd-1}(u)}d^{\rd-1}_{s}(u, c).
\end{equation}
\end{proof}

We are now ready to prove Lemma~\ref{lem:expe}.
\begin{proof}[Proof of Lemma~\ref{lem:expe}]
By linearity of expectation and~\eqref{def:sumde}, we have
\[
\E\left[\X_u\right]
= \sum_{\ell=2}^k(\cde\pa_{\rd})^{k-\ell}
\left(\E[\X_{\ell}]
 + \sum_{s=\ell+1}^{k}\E[\X_{\ell, s}]\right).
\]
It then follows from Lemmas~\ref{lem:exp:Xell} and~\ref{lem:exp:Xsell} that $\E\left[\X_u\right]$ is at most
\[
\begin{split}
&\leq \sum_{\ell=2}^k(\cde\pa_{\rd})^{k-\ell}
\left((1 + \er_0)\alpha_{\rd}\beta^{\ell}\sum_{c\in \Pa_{\rd-1}(u)}d^{\rd-1}_{\ell}(u, c) + \sum_{s=\ell+1}^{k}(1 + \er_0)\binom{s-1}{s-\ell}\pi_{\rd}^{s- \ell}\beta^{s}\sum_{c\in\Pa_{\rd-1}(u)}d^{\rd-1}_{s}(u, c)\right) \\
&= (1 + \er_0)\beta^{k}\sum_{c\in \Pa_{\rd-1}(u)}\sum_{\ell=2}^k(\cde\pa_{\rd-1})^{k-\ell}
\Bigg(\alpha_{\rd}d^{\rd-1}_{\ell}(u, c)
 + \sum_{s=\ell+1}^{k}\binom{s-1}{s-\ell}(\pi_{\rd}\beta)^{s- \ell}d^{\rd-1}_{s}(u, c)\Bigg),
\end{split}
\]
where the equality uses $\pa_{\rd}=\beta\pa_{\rd-1}$.
Regrouping terms, we further obtain that $\E\left[\X_u\right]$ is 
\[
\begin{split}
\le~& (1 + \er_0)\beta^{k}\sum_{c\in \Pa_{\rd-1}(u)}\Bigg[\alpha_{\rd}d^{\rd-1}_{2}(u, c)(\cde\pa_{\rd-1})^{k-2}\\
& \quad \quad  \quad \quad  \quad \quad  \quad \quad  \quad \quad\quad \quad+ \sum_{\ell=3}^kd^{\rd-1}_{\ell}(u, c)\Bigg(
\alpha_{\rd}(\cde\pa_{\rd-1})^{k-\ell}
 + \sum_{q=2}^{\ell-1}\binom{\ell-1}{\ell-q}(\pi_{\rd}\beta)^{\ell - q}(\cde\pa_{\rd-1})^{k-q}\Bigg)\Bigg]\\
=~& (1 + \er_0)\beta^{k}\sum_{c\in \Pa_{\rd-1}(u)}\Bigg[\alpha_{\rd}d^{\rd-1}_{2}(u, c)(\cde\pa_{\rd-1})^{k-2} \\
& \quad \quad  \quad \quad  \quad \quad  \quad \quad  \quad \quad\quad \quad+  \sum_{\ell=3}^kd^{\rd-1}_{\ell}(u, c)(\cde\pa_{\rd-1})^{k-\ell}\Bigg(
\alpha_{\rd}
 + \sum_{q=2}^{\ell-1}\binom{\ell-1}{\ell-q}(\cde\beta\pi_{\rd}\pa_{\rd-1})^{\ell - q}\Bigg)\Bigg].
\end{split}
\]
Recall from~\eqref{indas4} that $\cde\pi_{\rd}\pa_{\rd-1} \le 1$, and therefore
\[
\sum_{q=2}^{\ell-1}\binom{\ell-1}{\ell-q}(\cde\beta\pi_{\rd}\pa_{\rd-1})^{\ell - q} 
\leq 2^{\ell-1}\cde\beta\pi_{\rd}\pa_{\rd-1} \leq \beta\pi_{\rd}\pa_{\rd-1}/60,
\]
where the last inequality uses $\cde =1/(60\cdot2^k)$.
Hence,
\[
\begin{split}
\E\left[\X_u\right]
&\leq (1 + \er_0)(\alpha_{\rd} + \beta\pi_{\rd}\pa_{\rd-1}/60)\beta^{k}\sum_{c\in \Pa_{\rd-1}(u)}\sum_{\ell=2}^kd^{\rd-1}_{\ell}(u, c)(\cde\pa_{\rd-1})^{k-\ell}\\
&\leq \alpha'_{\rd}\beta^{k}\sum_{c\in \Pa_{\rd-1}(u)}d_{\rd-1}(u, c)
=\alpha'_{\rd}\beta^{k}|\Pa_{\rd-1}(u)|\Lambda_{\rd-1}(u),
\end{split}
\]
where the last inequality follows from $\er_0=\beta\pi_{\rd}\pa_{\rd-1}/60$, $\alpha_{\rd}=1 - \beta\pi_{\rd}\pa_{\rd-1}/5$ and $\alpha'_{\rd}=1 -\beta\pi_{\rd}\pa_{\rd-1}/6.$

\end{proof}

\section{Proof of Lemma~\ref{lem:concen}: Concentration of average $c$-degrees}\label{sec:concede}
Similarly to expectations, we fix an arbitrary choice of $u \in U_i$ and omit $u$ from notations when the underlying vertex is clear.
We will show that each $\X_{\ell}$ and $\X_{\ell, s}$ is well-concentrated around its expectation using the linear version of Talagrand's inequality (Theorem~\ref{talagrand}), and then combine them using the union bound to prove Lemma~\ref{lem:concen}.

More formally, recall from Table~\ref{table2} that $\theta=1/4k$. For every $2\le \ell\le k$, define
\begin{equation}\label{def:tau1}
\tau_{\ell} :=\alpha'_{\rd}\beta^{k}|\Pa_{\rd-1}(u)|\left(\frac{t_{\rd-1}}{(\cde\pa_{\rd-1})^{k-\ell}}\right)\Delta^{-\theta}.
\end{equation}
which will be the error term in concentrations with respect to uniformity $\ell$.
\begin{lemma}\label{lem:con:Xell}
 For every $2\le \ell\le k$,   
\[
 \Prb\left(|\X_{\ell} - \E[\X_{\ell}] |\leq 2(2\log\Delta)^{2(\ell-1)}\tau_{\ell}\right) \geq 1 - \exp\left(-\Omega\left(\log^2\Delta\right)\right).
\]
\end{lemma}

\begin{lemma}\label{lem:con:Xsell}
 For every $2\leq \ell <s\leq k$, 
\[
\Prb\left(|\X_{\ell, s} - \E[\X_{\ell, s}]|\leq 2\tau_{\ell}\right) \geq 1- \exp\left(-\Omega(\log^2\Delta)\right).
\]
\end{lemma}

We begin this section by showing how Lemma~\ref{lem:concen} is derived from Lemmas~\ref{lem:con:Xell} and~\ref{lem:con:Xsell}. After that, in Section~\ref{sec:con:excp}, we collect some useful lemmas that will later serve as exceptional outcomes in applications of Theorem~\ref{talagrand} for our concentration analysis. We then prove Lemma~\ref{lem:con:Xell} in Section~\ref{sec:con:ell} and Lemma~\ref{lem:con:Xsell} in Section~\ref{sec:con:ells}, respectively.

\begin{proof}[Proof of Lemma~\ref{lem:concen}]
By Lemmas~\ref{lem:con:Xell},~\ref{lem:con:Xsell}, and \eqref{def:sumde}, with probability at least $1- \exp\left(-\Omega(\log^2\Delta)\right)$,
\[
\begin{split}
\X_u - \E\left[\X_u\right] 
& = \sum_{\ell=2}^k(\cde\pa_{\rd})^{k-\ell}\left(
\left(\X_{\ell} - \E\left[\X_{\ell}\right]\right) + \sum_{s=\ell+1}^{k}\left(\X_{\ell, s}  - \E\left[\X_{\ell,s}\right]\right)\right)\\
&\le \sum_{\ell=2}^k(\cde\pa_{\rd})^{k-\ell}\tau_{\ell}\left(
2(2\log\Delta)^{2(\ell-1)}  + \sum_{s=\ell+1}^{k}2\right)\\
& = \alpha'_{\rd}\beta^{k}|\Pa_{\rd-1}(u)|t_{\rd-1}\Delta^{-\theta}\sum_{\ell=2}^k\beta^{k-\ell}\left(
2(2\log\Delta)^{2(\ell-1)}  + \sum_{s=\ell+1}^{k}2\right)\\
& \le \alpha'_{\rd}\beta^{k}|\Pa_{\rd-1}(u)|t_{\rd-1}\Delta^{-\theta}(\log\Delta)^{2k} = (\er/4)\alpha'_{\rd}\beta^{k}|\Pa_{\rd-1}(u)|t_{\rd-1},
\end{split}
\]
where the last inequality holds as $\Delta$ grows sufficiently large. This completes the proof.
\end{proof}

\subsection{Exceptional outcomes}\label{sec:con:excp}
The first two lemmas provide exceptional outcomes that will be used in the concentration analysis of $\X_{\ell}$.

\begin{lemma}\label{lem:exceout2}
Denote by $\Omega^*_2:=\Omega^*_2(u)$ the set of events where there exists a vertex $v\in N^2_{i-1}(u)$ such that $|\Ac_{\rd}(v)|\geq \log^2\Delta$. Then
\[
\Prb(\Omega^*_2) \leq \exp\left(- \Omega\left(\log^2\Delta\right)\right).
\]
\end{lemma}
\begin{proof}
Recall from Section~\ref{sec:alg:itera} (step 1) that for every vertex $v\in N^2_{\rd-1}(u)$, 
\[
\E[|A_{\rd}(v)|] = \sum_{c\in \Pa_{\rd-1}(v)}\E[\ac^{\rd}_{v, c}]=
\pi_{\rd}|\Pa_{\rd-1}(v)|\leq \pi_{\rd}\pa_{\rd-1} \le 1,
\] 
where the inequalities follow from~\eqref{def:palet} and~\eqref{indas4} respectively.
Set $\delta:=\frac{\log^2\Delta}{\E[A_{\rd}(v)] } - 1$. 
By Lemma~\ref{chernoff} (Chernoff bounds), we obtain that
\[
\begin{split}
\Prb\left[|\Ac_{\rd}(v)|\geq \log^2\Delta \right]
&=\Prb\left[|\Ac_{\rd}(v)|\geq \left(1 + \delta\right)\E[A_{\rd}(v)]\right]
\leq \exp\left(-\delta^2\E[|A_{\rd}(v)|]/(2 + \delta)\right)\\
&= \exp\left(- \Omega\left( \delta \E[A_{\rd}(v)] \right)\right)
=\exp\left(- \Omega\left(\log^2\Delta\right)\right).
\end{split}
\]
Then, using the union bound and~\eqref{bound:nbd}, we show that
\[
\Prb(\Omega^*_2) \leq |N^2_{\rd-1}(u)|\exp\left(- \Omega\left(\log^2\Delta\right)\right)
\leq k^4\Delta^2\exp\left(- \Omega\left(\log^2\Delta\right)\right)=\exp\left(- \Omega\left(\log^2\Delta\right)\right).
\]
\end{proof}

For every $2\le \ell \le k$, define 
\begin{equation}\label{def:indexset}
M_{\ell}:=\{(e, c):\ c\in \Pa_{\rd-1}(u),\ u\in e\in \Hc^{\rd-1}_{\ell, c}\}.
\end{equation}
\begin{lemma}\label{lem:exceout3}
For every $2\le \ell \le k$, denote by $\Omega^*_{3,\ell}:=\Omega^*_3(u, \ell)$ the set of events where there exists a color $c'\in \bigcup_{v\in N^2_{i-1}(u)}\Pa_{\rd-1}(v)$ such that\[
\left|\left\{(e, c)\in M_{\ell}:~c'\in \bigcup_{x\in e\setminus\{u\}}\Ac_{\rd}(x)\right\}\right|
\ge |\Pa_{\rd-1}(u)|(\cde\pa_{\rd-1})^{\ell-2}\log^2\Delta.
\] 
Then $\Prb(\Omega^*_{3,\ell}) \leq \exp\left(-\Omega\left(\log^2\Delta\right)\right).$
\end{lemma}
\begin{proof}
Fix an arbitrary color $c'\in \bigcup_{v\in N^2_{i-1}(u)}\Pa_{\rd-1}(v)$. 
For simplicity of notation, for every $(e,c)\in M_{\ell}$, let $\I_{e,c}$ be the indicator variable for the event $c'\in \bigcup_{x\in e\setminus\{u\}}\Ac_{\rd}(x)$, i.e., \begin{equation}\label{def:indi:out3}
 \I_{e,c}:=\I\left[c'\in \bigcup_{x\in e\setminus\{u\}}\Ac_{\rd}(x)\right],   \end{equation}
and observe that
\[
\left|\left\{(e, c)\in M_{\ell}:~c'\in \bigcup_{x\in e\setminus\{u\}}\Ac_{\rd}(x)\right\}\right|=\sum_{(e,c)\in M_{\ell}}\I_{e,c}:=\X_{c'}.
\] 
Note, by the definition of $\Ac_{\rd}(\cdot)$ (see Section~\ref{sec:alg:itera}, step 1), that $\E[\I_{e, c}]=\sum_{x\in e\setminus\{u\}}\Prb(\ac^i_{x,c'}=1)\leq (\ell-1)\pi_{\rd}$. 
Then we have 
\[
\begin{array}{lll}
\E[\X_{c'}]&=\sum_{c\in\Pa_{i-1}(u)}\sum_{u\in e\in\Hc^{\rd-1}_{\ell, c}}\E[\I_{e,c}]\leq \sum_{c\in\Pa_{i-1}(u)}d^{\rd-1}_{\ell}(u, c) (\ell-1)\pi_{\rd} & \\
&\leq|\Pa_{\rd-1}(u)|\frac{2t_{\rd-1}}{(\cde\pa_{\rd-1})^{k-\ell}}(\ell-1)\pi_{\rd} & \text{Proposition~\ref{prop:output}} \\
&=\Theta\left(|\Pa_{\rd-1}(u)|(\cde\pa_{\rd-1})^{\ell-2}\right).
& \pi_{\rd}=\Theta\left(\frac{(\cde\pa_{\rd-1})^{k-2}}{t_{\rd-1}}\right)
\end{array}
\]

\begin{claim}
$\X_{c'}$ is $(1, |\Pa_{\rd-1}(u)|(\cde\pa_{\rd-1})^{\ell-2})$-observable in $\Omega$. 
\end{claim}
\begin{proof}
Let $(e,c)\in M_{\ell}$. For any $\omega\in \Omega$ with $\I_{e, c}(\omega)=1$, by definition~\eqref{def:indi:out3}, under the outcome $\omega$, there exists a vertex $x \in e \setminus {u}$ such that $c' \in \Ac_{\rd}(x)$, i.e., $\ac^{\rd}_{x,c'}(\omega)=1$. 
We can then define the verifier of $\I_{e, c}$ as $R_{e, c}(\omega):=\{\ac^{\rd}_{x, c'}\}$. 
By Definition~\ref{def:verfiable}, $\I_{e, c}$ is $1$-verifiable with the verifier $R_{e, c}$.

Observe that a random variable $\ac^{\rd}_{x, c^*}$ is in the verifier of some $\I_{e,c}$, only if $x\in e\setminus u$. Then for every $\omega\in\Omega$ and random variable $\ac^{\rd}_{x, c^*}$,
\[
\begin{split}
|\{(e, c)\in M_{\ell}:\ \I_{e,c}(\omega)=1,~\ac^{\rd}_{x, c^*}\in R_{e,c}(\omega)\}|
&\le |\{(e, c)\in M_{\ell}:\ x\in e \setminus {u}\}|
\leq \sum_{c\in\Pa_{\rd-1}(u)}\delta_{2, \ell}(\Hc^{\rd-1}_{\ell,c})\\
&\leq |\Pa_{\rd-1}(u)|(\cde\pa_{\rd-1})^{\ell-2},
\end{split}
\]
where the last inequality follows from~Proposition~\ref{prop:output} (ii). By Definition~\ref{def:obser}, this shows that $\X_{c'}$ is $(1, |\Pa_{\rd-1}(u)|(\cde\pa_{\rd-1})^{\ell-2})$-observable.
\end{proof}

Set $\tau:=|\Pa_{\rd-1}(u)|(\cde\pa_{\rd-1})^{\ell-2}\log^2\Delta - \E[\X_{c'}]$, and note that $\tau \gg \E[\X_{c'}]$.
Applying Theorem~\ref{talagrand} on $\X_{c'}$ with $\tau$ and $\Omega^*=\emptyset$, we have
\[
\begin{split}
\Prb(\X_{c'} \geq |\Pa_{\rd-1}(u)|(\cde\pa_{\rd-1})^{\ell-2}\log^2\Delta)
&\leq \Prb(|\X_{c'} - \E[\X_{c'}]| \geq \tau)\\
& \leq 4\exp\left( -\frac{\tau^2}{8|\Pa_{\rd-1}(u)|(\cde\pa_{\rd-1})^{\ell-2})(4\E[\X_{c'}] + \tau)}\right) \\
& \leq 4\exp\left( -\frac{\tau}{16|\Pa_{\rd-1}(u)|(\cde\pa_{\rd-1})^{\ell-2})}\right)
= \exp\left( -\Omega(\log^2\Delta)\right).
\end{split}
\]

Lastly, note by~\eqref{bound:nbd} and the definition of palettes that $\sum_{v\in N^2_{i-1}(u)}|\Pa_{\rd-1}(v)| \le (k^2\Delta)^2\cdot \Cco\le k^4\Delta^3.$
This, together with the union bound, shows that 
\[
\Prb(\Omega^*_{3,\ell})\le \sum_{c'}\Prb(\X_{c'} \geq |\Pa_{\rd-1}(u)|(\cde\pa_{\rd-1})^{\ell-2}\log^2\Delta)
\leq k^4\Delta^3 \exp\left( -\Omega(\log^2\Delta)\right)=\exp\left(-\Omega\left(\log^2\Delta\right)\right),
\]
completing the proof.
\end{proof}

The next lemma establishes exceptional outcomes for the concentration analysis of $\X_{\ell, s}$.
\begin{lemma}\label{lem:outcome5}
For every $2\le \ell < s \le k$, denote by $\Omega^*_{4,s,\ell}:=\Omega^*_{4}(u, s,\ell)$ the set of events where there exists a color $c\in \Pa_{\rd-1}(u)$ such that 
\[
\left|\left\{e\in \Hc^{\rd-1}_{s, c} \mid u\in e,\ \sum_{v\in e-u}\ac^{\rd}_{v,c}\geq s-\ell\right\}\right| > (\cde p_{i-1})^{\ell-1}\log^{2(s-\ell)}\Delta.
\]
Then $\Prb(\Omega^*_{4,s,\ell}) \leq \exp\left(-\Omega(\log^2\Delta)\right).$
\end{lemma}

The proof of Lemma~\ref{lem:outcome5} is achieved by iteratively building exceptional outcome spaces through repetitive applications of Theorem~\ref{talagrand}. As a warm-up, let us first prove the following result.
\begin{lemma}\label{lem:outcome4}
Let $2\le \ell < s \le k$, $0\leq m\leq s-\ell-1$ and $1\leq a\leq s-m-1$. Denote by $\Omega^*_{s, \ell, m, a}$ the set of events where there exists a set $A\in N^2_{i-1}(u)$ of size $a$ and a color $c\in \Pa_{\rd-1}(u)$ such that 
\[
\left|\left\{e\in \Hc^{\rd-1}_{s, c} \mid A\cup\{u\}\subseteq e,\ \sum_{v\in e-A\cup\{u\}}\ac^{\rd}_{v,c}\geq m\right\}\right| > (\cde p_{i-1})^{s-(a+1)-m}\log^{2m}\Delta.
\]
Then $\Prb(\Omega^*_{s, \ell, m,a}) \leq \exp\left(-\Omega(\log^2\Delta)\right).$
\end{lemma}
\begin{proof}
Fix arbitrary $2\le \ell < s \le k$. 
We will prove the lemma by induction on $m\geq 0$.
For $m=0$ and $1\le a \le s-1$, the lemma is trivially true with $\Prb(\Omega^*_{s,\ell,0,a})=0$, as by Proposition~\ref{prop:output} (ii), we have $\delta_{a+1, s}(\Hc^{\rd-1}_{s, c}) \leq  (\cde p_{i-1})^{s-(a+1)}$ for every $c$ and $a\ge 1$.

Let $m\ge 1$ and assume by induction that the lemma holds for $m-1$.
In particular, for every $1\leq a \leq s-m-1$,
\begin{equation}\label{expout4:indas}
\Prb(\Omega^*_{s,\ell,m-1, a+1})\leq \exp\left(-\Omega(\log^2\Delta)\right).
\end{equation}
For simplicity of notation, for a set $A\subseteq N^2_{i-1}(u)$ and a color $c\in \Pa_{\rd-1}(u)$, let
 $\Ic(A, c):=\{e\in \Hc^{\rd-1}_{s, c} \mid A\cup\{u\}\subseteq e\}$ be an index set.
Then for every integer $m\geq 1$ and $e\in \Ic(A, c)$, denote by $\I_{e, m, A, c}$ the indicator variable for the event that $\sum_{v\in e-A\cup\{u\}}\ac^{\rd}_{v,c}\geq m$, i.e., 
\begin{equation}\label{def:indi:out4}
\I_{e, m, A, c}:=\I\left[\sum_{v\in e-A\cup\{u\}}\ac^{\rd}_{v,c}\geq m\right].
 \end{equation}
 and observe that
\[
\left|\left\{e\in \Hc^{\rd-1}_{s, c} \mid A\cup\{u\}\subseteq e,\ \sum_{v\in e-A\cup\{u\}}\ac^{\rd}_{v,c}\geq m\right\}\right| = \sum_{e\in \Ic(A, c)}\I_{e, m, A, c}:=\X_{m, A,c}.
\]
Note that for any set $A$ of size $a\ge 1$, 
\[
\begin{split}
\E[\X_{m, A, c}]
&\leq |\Ic(A, c)|\binom{s}{m}\pi_{\rd}^m
\leq \delta_{a+1, s}(\Hc^{\rd-1}_{s, c})\binom{s}{m}\pi_{\rd}^m
\leq \binom{s}{m}(\cde\pa_{\rd-1})^{s-(a+1)}\pi_{\rd}^m\\
&= \binom{s}{m}(\cde\pa_{\rd-1})^{s-(a+1)-m}(\cde\pa_{\rd-1}\pi_{\rd})^m
\leq \binom{s}{m}(\cde\pa_{\rd-1})^{s-(a+1)-m},
\end{split}
\]
where the third inequality again follows from~Proposition~\ref{prop:output}, and the last inequality uses $\cde\pa_{\rd-1}\pi_{\rd}\leq 1$, as given by~\eqref{indas4}.

\begin{claim}\label{outcome4:rdobe}
Let $1\leq a \leq s-m-1$. For every set $A\in N^2_{i-1}(u)$ of size $a$ and color $c\in \Pa_{\rd-1}(u)$,
\[
\text{$X_{m, A, c}$ is $\left(m, (\cde p_{i-1})^{s-(a+1)-m}\log^{2(m-1)}\Delta\right)$-observable with respect to $\Omega^*_{s,\ell,m-1, a+1}$}.
\]
\end{claim}
\begin{proof}
Let $e\in \Ic(A, c)$. For any $\omega\in\Omega\setminus\Omega^*_{s,\ell,m-1, a+1}$ with $\I_{e, m, A, c}(\omega)=1$, by definition~\eqref{def:indi:out4}, under the outcome $\omega$, there must exist a set $A'\subseteq e-A\cup\{u\}$ of size $m$ such that $\ac^{\rd}_{x,c}(\omega)=1$ for all $x\in A'$.
We can then define the verifier of $\I_{e, m, A, c}$ as $R_{e, m, A, c}(\omega):=\{\ac^{\rd}_{x,c},\ x\in A'\}$.
By Definition~\ref{def:verfiable}, $\I_{e, m, A, c}$ is $m$-verifiable with the verifier $R_{e, m, A, c}$.

Observe that a random variable $\ac^{\rd}_{x, c^*}$ is contained in some $R_{e, m, A, c}(w)$, only if $x\in e- A\cup\{u\}$, and $\sum_{v\in e-A\cup\{u, x\}}\ac^{\rd}_{v,c}(\omega)\geq m-1$. Then for every $\omega\in\Omega\setminus\Omega^*_{s,\ell,m-1, a+1}$ and random variable $\ac^{\rd}_{x, c^*}$,
\[
\begin{split}
&|\{
e\in \Ic(A,c) \mid \I_{e, m, A, c}(\omega)=1,\ \ac^{\rd}_{x,c^*}\in R_{e,m, A, c}(\omega)
\}|\\
\leq~&\left|\left\{e\in \Hc^{\rd-1}_{s, c} \mid A\cup\{u,x\}\subseteq e,\ \sum_{v\in e-A\cup\{u,x\}}\ac^{\rd}_{v,c}(\omega)\geq m-1\right\}\right|\\
\leq~&(\cde p_{i-1})^{s-(a+2)-(m-1)}\log^{2(m-1)}\Delta= (\cde p_{i-1})^{s-(a+1)-m}\log^{2(m-1)}\Delta,
\end{split}
\]
where the last inequality follows from $\omega\notin\Omega^*_{s,\ell,m-1, a+1}$ and the definition of $\Omega^*_{s,\ell,m-1, a+1}$.
By Definition~\ref{def:obser}, this shows that $X_{m, A, c}$ is $\left(m, (\cde p_{i-1})^{s-(a+1)-m}\log^{2(m-1)}\Delta\right)$-observable with respect to $\Omega^*_{s,\ell,m-1, a+1}$.
\end{proof}

Set 
\[
\tau_{m,a}:=\frac12(\cde p_{\rd-1})^{s-(a+1)-m}\log^{2m}\Delta,
\]
and observe that $\tau_{m,a}\gg \E[\X_{m, A, c}]$ for every $m\geq 1$ and set $A$ with $|A|=a \ge 1$. Applying Theorem~\ref{talagrand} on $\X_{m, A, c}$ with $\tau_{m, a}$ and $\Omega^*_{s, \ell,m-1, a+1}$, we obtain that
\[
\begin{split}
&\Prb\left(\X_{m, A, c}> (\cde p_{i-1})^{s-(a+1)-m}\log^{2m}\Delta\right)
\leq \Prb(|\X_{m, A, c} - \E[\X_{m, A, c}]|>\tau_{m, a})\\
\leq\ &4\exp\left(-\frac{\tau_{m, a}^2}{8m(\cde p_{i-1})^{s-(a+1)-m}\log^{2(m-1)}\Delta(4\E[\X_{m, A, c} ] + \tau_{m, a})}\right) + 4\Prb(\Omega^*_{s, \ell, m-1, a+1})\\
\leq\ &4\exp\left(-\frac{\tau_{m, a}}{16m(\cde p_{i-1})^{s-(a+1)-m}\log^{2(m-1)}\Delta}\right) + 4\exp\left(-\Omega(\log^2\Delta)\right)\\
\leq\ &4\exp\left(-\Omega(\log^2\Delta)\right) + 4\exp\left(-\Omega(\log^2\Delta)\right)=\exp\left(-\Omega(\log^2\Delta)\right),
\end{split}
\]
where the second inequality uses the induction hypothesis~\eqref{expout4:indas}.

Lastly, by~\eqref{bound:nbd} and the definition of palettes, we have $\binom{|N^2_{i-1}(u)|}{a}\cdot|\Pa_{\rd-1}(u)| \le (k^2\Delta)^{2a}\cdot \Cco\le k^{4k}\Delta^{2k+1}.$
This, together with the union bound, shows that 
\[
\Prb(\Omega^*_{s, \ell, m,a}) \leq \sum_{A\in\binom{N^2_{\rd-1}(u)}{a}}\sum_{c\in\Pa_{\rd-1}(u)}\Prb\left(\X_{m, A, c}>(\cde p_{i-1})^{s-(a+1)-m}\log^{2m}\Delta\right) \leq \exp\left(-\Omega(\log^2\Delta)\right),
\]
which completes the proof.
\end{proof}

Using $\Omega^*_{s, \ell, s-\ell-1, 1}$ from Lemma~\ref{lem:outcome4} as the exceptional outcome space, we now apply Theorem~\ref{talagrand} once again to prove Lemma~\ref{lem:outcome5}.

\begin{proof}[Proof of Lemma~\ref{lem:outcome5}]
Fix arbitrary $2\le \ell < s \le k$.
For simplicity of notation, for a color $c\in \Pa_{\rd-1}(u)$, let
 $\Ic(c):=\{e\in \Hc^{\rd-1}_{s, c} \mid u\in e\}$ be an index set. 
 Then for every $e\in \Ic(c)$, denote by $\I_{e, c}$ the indicator variable for the event that $\sum_{v\in e-u}\ac^{\rd}_{v,c}\geq s-\ell$, i.e.,
\begin{equation}\label{def:indi:out5}
\I_{e, c}:=\I\left[\sum_{v\in e-u}\ac^{\rd}_{v,c}\geq s-\ell\right],
 \end{equation}
and observe that
\[
\left|\left\{e\in \Hc^{\rd-1}_{s, c} \mid u\in e,\ \sum_{v\in e-u}\ac^{\rd}_{v,c}\geq s-\ell\right\}\right|=\sum_{e\in \Ic(c)}\I_{e, c}:=\X_{c}.
\]
Note that
\[
\begin{split}
\E[\X_{c}]
&\leq |\Ic(c)|\binom{s-1}{s-\ell}\pi_{\rd}^{s-\ell}
\leq d^{\rd-1}_{s}(u, c)\binom{s-1}{s-\ell}\pi_{\rd}^{s-\ell}
\leq \binom{s-1}{s-\ell}\frac{2t_{\rd-1}}{(\cde\pa_{\rd-1})^{k-s}}\pi_{\rd}^{s-\ell}\\
&=\binom{s-1}{s-\ell}\frac{2t_{\rd-1}\pi_{\rd}}{(\cde\pa_{\rd-1})^{k-\ell-1}}(\cde\pa_{\rd-1}\pi_{\rd})^{s-\ell-1}
\leq \binom{s-1}{s-\ell}\frac{2t_{\rd-1}\pi_{\rd}}{(\cde\pa_{\rd-1})^{k-\ell-1}}
=\binom{s-1}{s-\ell}\frac{\cac}{2}(\cde\pa_{\rd-1})^{\ell-1},
\end{split}
\]
where the third inequality follows from Proposition~\ref{prop:output}, and the last inequality uses $\cde\pa_{\rd-1}\pi_{\rd}\leq 1$, as given by~\eqref{indas4}.
\begin{claim}\label{outcome5:rdobe}
For every color $c\in \Pa_{\rd-1}(u)$,
\[
\text{$\X_{c}$ is $\left(s-\ell, (\cde p_{i-1})^{\ell-1}\log^{2(s-\ell-1)}\Delta\right)$-observable with respect to $\Omega^*_{s, \ell, s-\ell-1, 1}$}.
\]
\end{claim}
\begin{proof}
Let $e\in\Ic(c)$.
For any $\omega\in \Omega\setminus\Omega^*_{s, \ell, s-\ell-1, 1}$ with $\I_{e, c}(\omega)=1$, by definition~\eqref{def:indi:out5},
 there must exist a set $A\subseteq e-u$ of size $s-\ell$ such that $\ac^{\rd}_{x,c}=1$ for all $x\in A$.
We can then define the verifier of $\I_{e, c}$ as $R_{e, c}(\omega):=\{\ac^{\rd}_{x,c},\ x\in A\}$.
By Definition~\ref{def:verfiable}, $\I_{e, c}$ is $(s-\ell)$-verifiable with verifier $R_{e, c}$.

Observe that a random variable $\ac^{\rd}_{x, c^*}$ is contained in some $R_{e, c}(w)$, only if $\sum_{v\in e-\{u, x\}}\ac^{\rd}_{v,c}(\omega)\geq s-\ell-1$ and $x\in e- u$. Then for every $\omega\in\Omega\setminus\Omega^*_{s,\ell,s-\ell-1, 1}$ and random variable $\ac^{\rd}_{x, c^*}$,
\[
\begin{split}
|\{
e\in \Ic(c) \mid \I_{e, c}(\omega)=1,\ \ac^{\rd}_{x,c^*}\in R_{e, c}(\omega)
\}|
&\leq\left|\left\{e\in \Hc^{\rd-1}_{s, c} \mid \{u, x\}\subseteq e,\ \sum_{v\in e-\{u, x\}}\ac^{\rd}_{v,c}(\omega)\geq s-\ell-1\right\}\right|\\
&\leq(\cde p_{i-1})^{s-2-(s-\ell-1)}\log^{2(s-\ell-1)}\Delta\\
&= (\cde p_{i-1})^{\ell-1}\log^{2(s-\ell-1)}\Delta,
\end{split}
\]
where the last inequality follows from $\omega\notin\Omega^*_{s, \ell, s-\ell-1, 1}$ and the definition of $\Omega^*_{s, \ell, s-\ell-1, 1}$.
By Definition~\ref{def:obser}, this shows that $\X_{c}$ is $\left(s-\ell, (\cde p_{i-1})^{\ell-1}\log^{2(s-\ell-1)}\Delta\right)$-observable with respect to $\Omega^*_{s, \ell, s-\ell-1, 1}$.
\end{proof}
Set 
\[
\tau:=\frac12(\cde p_{\rd-1})^{\ell-1}\log^{2(s-\ell)}\Delta,
\]
and observe that $\tau \gg \E[\X_{c}]$. 
Applying Theorem~\ref{talagrand} on $\X_{c}$ with $\tau$ and $\Omega^*_{s,\ell,s-\ell-1, 1}$, we obtain that
\[
\begin{split}
&\Prb\left(\X_{c}> (\cde p_{i-1})^{\ell-1}\log^{2(s-\ell)}\Delta\right)
\leq \Prb(|\X_{c} - \E[\X_{c}]|>\tau)\\
\leq~&4\exp\left(-\frac{\tau^2}{8(s-\ell)(\cde p_{i-1})^{\ell-1}\log^{2(s-\ell-1)}\Delta(4\E[\X_{c} ] + \tau)}\right) + 4\Prb(\Omega^*_{s,\ell, s-\ell-1, 1})\\
\leq~&4\exp\left(-\frac{\tau}{16(s-\ell)(\cde p_{i-1})^{\ell-1}\log^{2(s-\ell-1)}\Delta}\right) + 4\exp\left(-\Omega(\log^2\Delta)\right)\\
\leq~&\exp\left(-\Omega(\log^2\Delta)\right),
\end{split}
\]
where the second inequality follows from Lemma~\ref{lem:outcome4}.

Lastly, by the union bound, we have
\[
\Prb(\Omega^*_{4, s, \ell}) \leq \sum_{c\in\Pa_{\rd-1}(u)}\Prb\left(\X_{c}> (\cde p_{i-1})^{\ell-1}\log^{2(s-\ell)}\Delta\right) \leq \exp\left(-\Omega(\log^2\Delta)\right),
\]
which completes the proof.
\end{proof}

\subsection{Proof of Lemma~\ref{lem:con:Xell}: Concentration of $\X_{\ell}$}\label{sec:con:ell}
Fix an arbitrary $2\leq \ell \leq k$, and recall from~\eqref{def:keedge} that
\[
\X_{\ell}=\sum_{c\in\Pa_{\rd-1}(u)}\sum_{\substack{e\in\Hc^{\rd-1}_{\ell, c}\\ u\in e}} \I\left[\left(\forall x\in e\setminus\{u\},~x\in U_{\rd}\right) \wedge \left(\forall y\in e,~c\in\nPa_{\rd}(y)\right)\right].
\]
Unfortunately, we are unable to directly show that $\X_{\ell}$ is $(r, d)$-certifiable with respect to any suitable set of exceptional outcomes. However, we can express $\X_{\ell}$ as a linear combination of several random variables that are $(r, d)$-certifiable, and then apply Theorem~\ref{talagrand} to each of them.

More formally, recall from~\eqref{def:indexset} that 
$
M_{\ell}=\{(e, c):\ c\in \Pa_{\rd-1}(u),\ u\in e\in \Hc^{\rd-1}_{\ell, c}\}.
$
Define
\[
\X^{1}_{\ell}:=\left|\left\{
(e, c)\in M_{\ell}:~\forall x\in e\setminus\{u\},~x\in U_{\rd}\right\}\right|
\]
and 
\[
\X^{2}_{\ell}:=\left|\left\{
(e, c)\in M_{\ell}:~\left(\forall x\in e\setminus\{u\},~x\in U_{\rd}\right)\wedge\left(\exists y\in e,~c\notin\nPa_i(y)\right)\right\}\right|
\]
Observe that $\X_{\ell}=\X_{\ell}^1 - \X_{\ell}^2$. As mentioned above, we will analyze the concentration of each $\X_{\ell}^1$ and $\X_{\ell}^2$ individually, and then combine the results (using the union bound) to obtain the concentration for $\X_{\ell}$.

First, let us consider $\X_{\ell}^1$. To analyze $\X_{\ell}^1$, we need to further decompose it into more variables. 
Note that for any hyperedge $e$ containing $u$, we can always represent it as a unique sequence $e = uv_1 \ldots v_{|e|-1}$ (based on an arbitrary prefixed ordering on the vertices of $V(\Hc)$).
For integers $i_1, i_2, \ldots, i_{\ell-1}$, let
\begin{multline*}
\rva_{i_1,\ldots,i_{\ell-1}}:=\left|\left\{(e, c)\in M_{\ell}:~e = uv_1 \ldots v_{\ell} \text{ s.t. } \forall r\in [\ell-1],~|\Ac_{\rd}(v_r)|=i_r\right.\right. \\
\left.\left.\text{ and } \left|\Ac_{\rd}(v_r)\cap\left(\Pa_{\rd-1}(v_r) - \nPa_{\rd}(v_r)\right)\right|=i_r\right\}\right|. 
\end{multline*}
Recall from Section~\ref{sec:alg:itera} (step 4) that a vertex $v\in U_i$ if and only if $\nPa_{\rd}(v) \cap \Ac_{\rd}(v)=\emptyset$. Then we have
\[
\X_{\ell}^1 = \sum_{0\leq i_1,\ldots,i_{\ell-1}\leq \Cco}\rva_{i_1 ,\ldots,i_{\ell-1}}.
\]
As $C\sim (\Delta/\log\Delta)^{1/(k-1)}$, $\X_{\ell}^1$ consists of too many variables, so even if we concentrate each of them very well, the error may still blow up when we combine them.  
This is where we use Lemma~\ref{lem:exceout2}: we can exclude a small set $\Omega^*_2$ of exceptional outcomes, such that in the remaining probability space, every vertex in $N^2_{\rd-1}(u)$ has at most $\log^2 \Delta$ activated colors.
More formally, we set
\begin{equation}\label{eq:Ydef}
\Y_{\ell}^1 := \sum_{0\leq i_1,\ldots,i_{\ell-1}\leq \log^2\Delta}\rva_{i_1 ,\ldots,i_{\ell-1}},
\end{equation}
and Lemma~\ref{lem:exceout2} then shows that
\begin{equation}\label{eq:appronum}
\Prb(\X_{\ell}^1 = \Y_{\ell}^1) \geq 1- \Prb(\Omega^*_2)\geq 1 -  \exp\left(- \Omega\left(\log^2\Delta\right)\right).
\end{equation}
Thus, we can instead analyze $\Y_{\ell}^1$ rather than $\X_{\ell}^1$.

Another issue is that, since $\rva_{i_1 ,\ldots,i_{\ell-1}}$ requires the corresponding sets to be of exact sizes, handling such variables is still challenging. To overcome this obstacle, we further introduce the following new variables: 
for integers $i_1, \ldots, i_{\ell-1}$ and $j_1, \ldots, j_{\ell-1}$, let
\begin{multline*}
\rvb_{i_1,\ldots,i_{\ell-1}}^{j_1,\ldots,j_{\ell-1}}:=\left|\left\{(e, c)\in M_{\ell}:~e = uv_1 \ldots v_{\ell} \text{ s.t. } \forall r\in [\ell-1],~|\Ac_{\rd}(v_r)|\ge i_r,\right.\right. \\
\left.\left.\text{ and } \left|\Ac_{\rd}(v_r)\cap\left(\Pa_{\rd-1}(v_r) - \nPa_{\rd}(v_r)\right)\right|\ge j_r\right\}\right|. 
\end{multline*}
\begin{prop}\label{prop:lincomb} For any integers $i_1, \ldots,i_{\ell-1} \geq 0$, 
\[
\rva_{i_1, \ldots,i_{\ell-1}}=\sum_{\substack{\sigma_r, \tau_r\in\{0,1\}\\  \forall r\in[\ell-1]}} (-1)^{f(\sigma_1, \tau_1, \ldots, \sigma_{\ell-1}, \tau_{\ell-1})}\rvb_{i_1+\sigma_1, \ldots, i_{\ell-1} + \sigma_{\ell-1}}^{i_1 +\tau_1,\ldots, i_{\ell-1} + \tau_{\ell-1} }, 
\]
where $f(\sigma_1, \tau_1, \ldots, \sigma_{\ell-1}, \tau_{\ell-1}):= |\{r\in [\ell-1]:~\sigma_r\neq \tau_r\}|.$
\end{prop}
The proof of Proposition~\ref{prop:lincomb} is elementary: for $\ell=2$, observe from the definition that $a_{i_1}=(b_{i_1}^{i_1} - b_{i_1}^{i_1+1}) - (b_{i_1+1}^{i_1} - b_{i_1+1}^{i_1+1})$; the cases for larger $\ell$ follow from applying the same argument to each coordinate $r\in [\ell-1]$, and we omit the details.

Combining~\eqref{eq:Ydef} and Proposition~\ref{prop:lincomb}, we obtain that
\begin{equation}\label{eq:Ydefnew}
\Y_{\ell}^1=\sum_{\substack{0\leq i_r\leq \log^2\Delta\\  \forall r\in[\ell-1]}}\sum_{\substack{\sigma_r, \tau_r\in\{0,1\}\\  \forall r\in[\ell-1]}} (-1)^{f(\sigma_1, \tau_1, \ldots, \sigma_{\ell-1}, \tau_{\ell-1})}\rvb_{i_1+\sigma_1, \ldots, i_{\ell-1} + \sigma_{\ell-1}}^{i_1 +\tau_1, \ldots, i_{\ell-1} + \tau_{\ell-1}}.
\end{equation}
We will apply Theorem~\ref{talagrand} to each $\rvb_{i_1, \ldots,i_{\ell-1}}^{j_1, \ldots, j_{\ell-1}}$ individually, where
\begin{equation}\label{eq:rrange}
0\leq i_r, j_r\leq \log^2\Delta + 1 \quad \text{for all } r\in[\ell-1].
\end{equation}
\begin{claim}\label{claim:rdver1}
For every integers $i_1, \ldots, i_{\ell-1}$ and $j_1, \ldots, j_{\ell-1}$ that satisfies~\eqref{eq:rrange}, 
\[
\text{$\rvb_{i_1, \ldots,i_{\ell-1}}^{j_1, \ldots, j_{\ell-1}}$ is $\left(\ell k\log^2\Delta,\ 2|\Pa_{\rd-1}(u)|(\cde\pa_{\rd-1})^{\ell-2}\log^2\Delta\right)$-observable with respect to $\Omega^*_{3,\ell}$,}
\]
where $\Omega^*_{3,\ell}$ is defined as in Lemma~\ref{lem:exceout3}.
\end{claim}
\begin{proof}
For every $(e,c)\in M_{\ell}$ with $e=uv_1\ldots v_{\ell}$, let $\I_{e,c}$ denote the indicator variable for the event that
\begin{itemize}
\item $|\Ac_{\rd}(v_r)|\geq i_r$ for all $r\in[\ell-1]$;
\item $|\Ac_{\rd}(v_r)\cap(\Pa_{\rd-1}(v_r) - \nPa_{\rd}(v_r))|\geq j_r$ for all $r\in[\ell-1]$.
\end{itemize}
Observe that
\[
\rvb_{i_1, \ldots,i_{\ell-1}}^{j_1, \ldots, j_{\ell-1}} = \sum_{(e, c)\in M_{\ell}} \I_{e,c}
\]

We first show that every $\I_{e,c}$ is $(\ell k\log^2\Delta)$-verifiable.
For any $\omega\in\Omega\setminus \Omega^*_{3,\ell}$ with $\I_{e,c}(\omega)=1$, by the definition of $\I_{e,c}$, under the outcome $\omega$, for every $r\in [\ell-1]$, there exist a set $C^1_{v_r}\subseteq \Pa_{\rd-1}(v_r)$ of size $i_r$, and a set $C^2_{v_r}\subseteq C^1_{v_r}$ of size $j_r$, such that
\begin{itemize}
\item $\ac^i_{v_r,c^*}(\omega)=1$ for every $c^*\in C^1_{v_r}$;
\item $c^*\notin \nPa_{i}(v_r)$ for every $c^*\in C^2_{v_r}$.
\end{itemize}
Recall from Section~\ref{sec:alg:itera} (step 3) that a color $c^*$ is not in $\nPa_{i}(v_r)$, either because of $\ke^{\rd}_{v_r, c^*}(\omega)=0$ or because there exists an edge $e_{r, c^*}\in\cup_{\ell\geq 2}\Hc^{\rd-1}_{\ell, c^*}$ s.t. 
$v_r\in e_{r, c^*}$ and $\ac^{\rd}_{v, c^*}(\omega)=1$ for all $v\in e_{r, c^*}\setminus\{v_r\}$. 
Then we can further partition $C^2_{v_r}$ into the following two sets:
\[
\hat{C}^2_{v_r}:=\{c^*\in C^2_{v_r} \mid \ke^{\rd}_{v_r, c^*}(\omega)=0\},
\] 
and
\[
\tilde{C}^2_{v_r}:=\left\{c^*\in C^2_{v_r} \mid  \exists e_{r, c^*}\in\cup_{\ell\geq 2}\Hc^{\rd-1}_{\ell, c^*} \text{ s.t. $\ac^{\rd}_{v, c^*}(\omega)=1$ for all $v\in e_{r, c^*}\setminus\{v_r\}$}\right\}. 
\]
We can then define the verifier of $\I_{e, c}$ as
\begin{equation}\label{def:ver1}
\begin{split}
R_{e, c}(\omega)
&:=\left( \bigcup_{r=1}^{\ell-1}\bigcup_{c^*\in C^1_{v_r}}\ac^{\rd}_{v_r, c^*}\right) 
\cup \left(\bigcup_{r=1}^{\ell-1}\bigcup_{c^*\in \tilde{C}^2_{v_r}}\bigcup_{v\in e_{r, c^*}\setminus\{v_r\}}\ac^{\rd}_{v, c^*}\right) \cup \left(\bigcup_{r=1}^{\ell-1}\bigcup_{c^*\in \hat{C}^2_{v_r}}\ke^{\rd}_{v_r, c^*}\right)\\
&:=R^1_{e, c}(\omega)\cup R^2_{e, c}(\omega)\cup R^3_{e, c}(\omega).
\end{split}
\end{equation}
Observe that 
\[
|R_{e, c}(\omega)|\leq \sum_{r=1}^{\ell-1}(i_r + j_r(k-1) + j_r)=\sum_{r=1}^{\ell-1}i_r + k\sum_{r=1}^{\ell-1}j_r\leq \ell k\log^2\Delta,
\]
where the last inequality follows from~\eqref{eq:rrange}.
Then by Definition~\ref{def:verfiable}, $\I_{e, c}$ is $(\ell k\log^2\Delta)$-verifiable with the verifier $R_{e, c}$.

Next, we check the observability.
Observe that a random variable $\ke^{\rd}_{v,c^*}$ is in the verifier of some $\I_{e,c}$, only if $v\in e-u$.
Then for every $\omega\in\Omega\setminus\Omega^*_{3,\ell}$ and random variable $\ke^{\rd}_{v, c^*}$, 
\begin{equation}\label{eq:ober1}
\begin{split}
|\{(e, c)\in M_{\ell}:\ \I_{e, c}(w)=1, ~\ke^{\rd}_{v, c^*}\in R_{e, c}(w)\}| & \le |\{(e, c)\in M_{\ell}:~\{u, v\}\in e\}|\\
&\leq \sum_{c\in \Pa_{\rd-1}(u)}\delta_{2,\ell}(\Hc^{\rd-1}_{\ell, c}) \leq |\Pa_{\rd-1}(u)|(\cde\pa_{\rd-1})^{\ell-2},
\end{split}
\end{equation}
where the last inequality follows from Proposition~\ref{prop:output}.

The calculation for activation variables is similar but more involved.
For a random variable $\ac^i_{v, c^*}$, observe that it appears in the verifier of some $\I_{e,c}$, as part of $R^1_{e, c}(w)$, only if $v\in e-u$. On the other hand, it appears in the verifier of some $\I_{e,c}$, as part of $R^2_{e, c}(w)$, only if \[
\text{$c^*\in \bigcup_{v\in N^2_{i-1}(u)}\Pa_{\rd-1}(v)$\quad and \quad $c^*\in \Ac_{\rd}(v)$ for some $v\in e-u$.}
\]
Therefore, for every $\omega\in\Omega\setminus\Omega^*_{3,\ell}$ and random variable $\ac^i_{v, c^*}$, 
\begin{equation}\label{eq:ober2}
\begin{split}
& |\{(e, c)\in M_{\ell}:\ \I_{e, c}(w)=1,~\ac^{\rd}_{v, c^*}\in R_{e, c}(w)\}| \\
\leq~&  |\{(e, c)\in M_{\ell}:\ \I_{e, c}(w)=1,~\ac^{\rd}_{v, c^*}\in R^1_{e, c}(w)\}| 
+
|\{(e, c)\in M_{\ell}:\ \I_{e, c}(w)=1,~\ac^{\rd}_{v, c^*}\in R^2_{e, c}(w)\}|\\
\leq~& |\Pa_{\rd-1}(u)|(\cde\pa_{\rd-1})^{\ell-2} + \left|\left\{(e, c)\in M_{\ell}:~c^*\in \bigcup_{x\in e\setminus\{u\}}\Ac_{\rd}(x)\right\}\right|\\
\leq~& |\Pa_{\rd-1}(u)|(\cde\pa_{\rd-1})^{\ell-2} +  |\Pa_{\rd-1}(u)|(\cde\pa_{\rd-1})^{\ell-2}\log^2\Delta \leq 2|\Pa_{\rd-1}(u)|(\cde\pa_{\rd-1})^{\ell-2}\log^2\Delta,    
\end{split}
\end{equation}
where the second inequality follows similarly to~\eqref{eq:ober1}, and the third inequality uses $\omega\notin\Omega^*_{3,\ell}$ along with the definition of $\Omega^*_{3,\ell}$.
By Definition~\ref{def:obser}, this completes the proof of the claim.
\end{proof}

Recall from~\eqref{def:tau1} that
\[
\tau_{\ell}=\alpha'_{\rd}\beta^{k}|\Pa_{\rd-1}(u)|\left(\frac{t_{\rd-1}}{(\cde\pa_{\rd-1})^{k-\ell}}\right)\Delta^{-\theta},
\]
Note that for every integers $i_1, \ldots, i_{\ell-1}$ and $j_1, \ldots, j_{\ell-1}$,  
\[
\E\left[\rvb_{i_1, \ldots,i_{\ell-1}}^{j_1, \ldots, j_{\ell-1}}\right] \leq |M_{\ell}|=\sum_{c\in \Pa_{\rd-1}(u)} d^{\rd-1}_{\ell}(u, c)
\leq |\Pa_{\rd-1}(u)|\cdot\frac{2t_{\rd-1}}{(\cde\pa_{\rd-1})^{k -\ell}},
\]
where the right-hand side is much larger than $\tau_{\ell}$.
Applying Theorem~\ref{talagrand} to $\rvb_{i_1, \ldots,i_{\ell-1}}^{j_1, \ldots, j_{\ell-1}}$ with $\tau_{\ell}$ and $\Omega^*_{3,\ell}$, we obtain that
\[
\begin{split}
& \Prb\left(\left|\rvb_{i_1, \ldots,i_{\ell-1}}^{j_1, \ldots, j_{\ell-1}}  - \E\left[\rvb_{i_1, \ldots,i_{\ell-1}}^{j_1, \ldots, j_{\ell-1}}\right]\right|> \tau_{\ell}\right)  \\
\leq~ &  4\exp\left(-\frac{\tau_{\ell}^2}{8\ell k\log^2\Delta\cdot 2|\Pa_{\rd-1}(u)|(\cde\pa_{\rd-1})^{\ell-2}\log^2\Delta \left(4\E\left[\rvb_{i_1, \ldots,i_{\ell-1}}^{j_1, \ldots, j_{\ell-1}}\right] + \tau_{\ell}\right)}\right) + 4\Prb(\Omega^*_{3,\ell}) \\
\leq~ & 4\exp\left(-\Omega\left(
\frac{t_{i-1}\Delta^{-2\theta}}{(\cde\pa_{\rd-1})^{k-2}\log^4\Delta}
\right)\right) + 4\exp\left(-\Omega\left(\log^2\Delta\right)\right) \\
\leq~ & 4\exp\left(-\Omega\left(
\frac{\Delta^{1/2(k-1) - 1/2k}}{\log^4\Delta}
\right)\right) + 4\exp\left(-\Omega\left(\log^2\Delta\right)\right)
= \exp\left(-\Omega\left(\log^2\Delta\right)\right).
\end{split}
\]
where the second inequality follows from Lemma~\ref{lem:exceout3}, and the last inequality uses~\eqref{indas5} along with $\theta=1/4k$.
Then, by the union bound and~\eqref{eq:Ydefnew}, we have
\[
\Prb\left(|\Y_{\ell}^1 - \E[\Y_{\ell}^1]| > (2\log\Delta)^{2(\ell-1)}\tau_{\ell}\right) \\
\leq (2\log\Delta)^{2(\ell-1)}\exp\left(-\Omega\left(\log^2\Delta\right)\right) = \exp\left(-\Omega\left(\log^2\Delta\right)\right).
\]
This, together with~\eqref{eq:appronum}, shows that
\begin{equation}\label{con:xell1}
\Prb\left(|\X_{\ell}^1 - \E[\X_{\ell}^1] | \leq (2\log\Delta)^{2(\ell-1)}\tau_{\ell}\right) 
\geq 1 - 2\exp\left(-\Omega\left(\log^2\Delta\right)\right) =1 - \exp\left(-\Omega\left(\log^2\Delta\right)\right).
\end{equation}

The argument for the concentration of $\X_{\ell}^2$ is very similar to that of $\X_{\ell}^1$. Therefore, from this point on, we will focus on the differences and be brief on the similar parts. 

For integers $i_1, \ldots, i_{\ell-1}$ and $j_1, \ldots, j_{\ell-1}$, define
\begin{multline*}
(\rvb')_{i_1,\ldots,i_{\ell-1}}^{j_1,\ldots,j_{\ell-1}}:=\left|\left\{(e, c)\in M_{\ell}:~e = uv_1 \ldots v_{\ell} \text{ s.t. } \forall r\in [\ell-1],~|\Ac_{\rd}(v_r)|\ge i_r,\right.\right. \\
\left.\left.\left|\Ac_{\rd}(v_r)\cap\left(\Pa_{\rd-1}(v_r) - \nPa_{\rd}(v_r)\right)\right|\ge j_r\right\}, \text{ and } \exists y\in e,~c\notin\nPa_i(y)\right|. 
\end{multline*}
and then let
\begin{equation}\label{def:Y2}
\Y_{\ell}^2:=\sum_{\substack{0\leq i_r\leq \log^2\Delta\\  \forall r\in[\ell-1]}}\sum_{\substack{\sigma_r, \tau_r\in\{0,1\}\\  \forall r\in[\ell-1]}} (-1)^{f(\sigma_1, \tau_1, \ldots, \sigma_{\ell-1}, \tau_{\ell-1})}(\rvb')_{i_1+\sigma_1, \ldots, i_{\ell-1} + \sigma_{\ell-1}}^{i_1 +\tau_1, \ldots, i_{\ell-1} + \tau_{\ell-1}}.
\end{equation}
A similar argument to that for $\X^1_{\ell}$ shows that
\begin{equation}\label{X2Y2apprx}
\Prb(\X_{\ell}^2 = \Y_{\ell}^2) \geq 1 -  \exp\left(- \Omega\left(\log^2\Delta\right)\right).
\end{equation}
\begin{claim}\label{claim:rdver2}
For every integers $i_1, \ldots, i_{\ell-1}$ and $j_1, \ldots, j_{\ell-1}$ that satisfies~\eqref{eq:rrange}, 
\[
\text{$(\rvb')_{i_1, \ldots,i_{\ell-1}}^{j_1, \ldots, j_{\ell-1}}$ is $\left(\ell k\log^2\Delta,\ 3|\Pa_{\rd-1}(u)|(\cde\pa_{\rd-1})^{\ell-2}\log^2\Delta\right)$-observable with respect to $\Omega^*_{3,\ell}$,}
\]
where $\Omega^*_{3,\ell}$ is defined as in Lemma~\ref{lem:exceout3}.
\end{claim}
\begin{proof}
For every $(e,c)\in M_{\ell}$ with $e=uv_1\ldots v_{\ell}$, let $\I'_{e,c}$ denote the indicator variable for the event that
\begin{itemize}
\item[(i)] $|\Ac_{\rd}(v_r)|\geq i_r$ for all $r\in[\ell-1]$;
\item[(ii)] $|\Ac_{\rd}(v_r)\cap(\Pa_{\rd-1}(v_r) - \nPa_{\rd}(v_r))|\geq j_r$ for all $r\in[\ell-1]$;
\item[(iii)] $c\notin\nPa_{\rd}(y)$ for some $y\in e$.
\end{itemize}
Note that $\I'_{e, c}$ is very similar to $\I_{e,c}$ in Claim~\ref{claim:rdver1}, except that it requires an additional condition, namely (iii).
Observe that
\[
(\rvb')_{i_1, \ldots,i_{\ell-1}}^{j_1, \ldots, j_{\ell-1}} := \sum_{(e, c)\in M_{\ell}} \I'_{e,c}.
\]

For any $\omega\in\Omega\setminus \Omega^*_{3,\ell}$ with $\I'_{e,c}(\omega)=1$, by the definition of $\I'_{e,c}$, under the outcome $\omega$, 
for every $r\in [\ell-1]$, there exists $C^1_{v_r}$, $\hat{C}^2_{v_r}$, $\tilde{C}^2_{v_r}$ as defined in the proof of Claim~\ref{claim:rdver1}; additionally, there exists a vertex $y\in e$ such that $c\notin \nPa_{\rd}(y)$.
Note that $c\notin \nPa_{\rd}(y)$, either because of $\ke^{\rd}_{y, c}(\omega)=0$, or because there exists an edge $e_{y, c}\in \bigcup_{\ell\geq 2}\Hc^{\rd-1}_{\ell, c}$ s.t. $y\in e_{y, c}$ and $ \ac^{\rd}_{v, c}(\omega)=1$ for all $v\in e_{y, c}\setminus \{y\}$. 
We can then define the verifier of $\I'_{e, c}$ as
\[
R'_{e,c}(\omega):=R_{e,c}(\omega) \cup R^4_{e,c}(\omega)= 
\left\{\begin{array}{lr}
        R_{e,c}(\omega) \cup \{\ke^{\rd}_{y, c}\} & \text{if } \ke^{\rd}_{y, c}(\omega)=0;\\
        R_{e,c}(\omega) \cup \left(\bigcup_{v\in e_{y, c}\setminus \{y\}}\ac^{\rd}_{v, c}\right) & \text{if } \ke^{\rd}_{y, c}(\omega)\neq 0,
        \end{array}\right.
\]
where $R_{e,c}(\omega)$ is as defined in~\eqref{def:ver1}.
A similar calculation as in Claim~\ref{claim:rdver1} shows that every $\I'_{e,c}$ is $(\ell k\log^2\Delta)$-verifiable.

Next, we check the observability.
For a random variable $\ke^i_{v, c^*}$, observe that it appears in the verifier of some $\I'_{e,c}$, as part of $R^4_{e, c}(w)$, only if $c=c^*$ and $\{v, u\}\in e$, where $v$ could possibly be equal to $u$. Combing this with~\eqref{eq:ober1}, we have that for every $\omega\in\Omega\setminus\Omega^*_{3,\ell}$ and random variable $\ke^{\rd}_{v, c^*}$, 
\[
\begin{split}
|\{(e, c)\in M_{\ell}:\ \I'_{e, c}(w)=1, ~\ke^{\rd}_{v, c^*}\in R'_{e, c}(w)\}| 
&\le |\Pa_{\rd-1}(u)|(\cde\pa_{\rd-1})^{\ell-2} +\max\left\{\delta_{2,\ell}(\Hc^{\rd-1}_{\ell, c^*}), d^{\rd-1}_{\ell}(u, c^*)\right\}\\
&\le|\Pa_{\rd-1}(u)|(\cde\pa_{\rd-1})^{\ell-2} + (\cde\pa_{\rd-1})^{\ell-2} + 2t_{\rd-1}/(\cde \pa_{\rd-1})^{k-\ell}\\
& \le 2|\Pa_{\rd-1}(u)|(\cde\pa_{\rd-1})^{\ell-2} +  2(k-1)(\cde\pa_{\rd-1})^{\ell-1}\log\Delta\\
&\ll |\Pa_{\rd-1}(u)|(\cde\pa_{\rd-1})^{\ell-2}\log^2\Delta
\end{split}
\]
where the second inequality follows from~Proposition~\ref{prop:output}, the third inequality uses $\zeta_{\rd-1}=\frac{t_{\rd-1}}{(\cde\pa_{\rd-1})^{k-1}}$ along with~\eqref{indas6}, and the last inequality uses $p_{i-1}\le(1 + o(1))|\Pa_{\rd-1}(u)|$, which follows from Proposition~\ref{prop:pbou} and the assumption \eqref{indas1}.

On the other hand, for a random variable $\ac^i_{v, c^*}$, observe that it appears in the verifier of some $\I'_{e,c}$, as part of $R^4_{e, c}(w)$, only if $c=c^*$ and $u\in e$.
Combing this with~\eqref{eq:ober2}, we have that for every $\omega\in\Omega\setminus\Omega^*_{3,\ell}$ and random variable $\ac^{\rd}_{v, c^*}$, 
\[
\begin{split}
|\{(e, c)\in M_{\ell}:\ \I'_{e, c}(w)=1, ~\ac^{\rd}_{v, c^*}\in R'_{e, c}(w)\}| 
&\le 2|\Pa_{\rd-1}(u)|(\cde\pa_{\rd-1})^{\ell-2}\log^2\Delta + d^{\rd-1}_{\ell}(u, c^*)\\
&\le 3|\Pa_{\rd-1}(u)|(\cde\pa_{\rd-1})^{\ell-2}\log^2\Delta.
\end{split}
\]
By Definition~\ref{def:obser}, this completes the proof of the claim.
\end{proof}

With~\eqref{def:Y2},~\eqref{X2Y2apprx}, and Claim~\ref{claim:rdver2}, following the same argument as for $\X^1_{\ell}$, we can show that
\begin{equation}\label{con:xell2}
\Prb\left(|\X_{\ell}^2 - \E[\X_{\ell}^2] |\leq (2\log\Delta)^{2(\ell-1)}\tau_{\ell}\right) 
\geq 1 - \exp\left(-\Omega\left(\log^2\Delta\right)\right).
\end{equation}
Finally, recall that $\X_{\ell}=\X_{\ell}^1- \X_{\ell}^2$. Combining~\eqref{con:xell1} and~\eqref{con:xell2}, and applying the union bound one more time, we obtain that
\[
 \Prb\left(|\X_{\ell} - \E[\X_{\ell}] |\leq 2(2\log\Delta)^{2(\ell-1)}\tau_{\ell}\right) \geq 1 - \exp\left(-\Omega\left(\log^2\Delta\right)\right).
\]
 This completes the proof of Lemma~\ref{lem:con:Xell}.

\subsection{Proof of Lemma~\ref{lem:con:Xsell}: Concentration of $\X_{\ell, s}$}\label{sec:con:ells}
Fix arbitrary $2\leq \ell <s\leq k$, and recall from~\eqref{def:inedge} that
\[
\X_{\ell, s}=\sum_{c\in\Pa_{\rd-1}(u)}\sum_{\substack{e\in \Hc^{\rd-1}_{s, c}\\ u\in e}}\sum_{Q\in\binom{e-u}{s-\ell}}
\I\left[\left(\forall x\in Q,~c\in\Ac_{\rd}(x) \right)\wedge\left(\forall y\in e,~c\in \nPa_{\rd}(y)\right)\right].
\]
Similarly to $\X_{\ell}$, we will express $\X_{\ell, s}$ as a linear combination of several random variables that are $(r, d)$-observable.
More formally, let
\[
\Ic_{\ell, s}:=\{(c, e, Q)\mid c\in \Pa_{\rd-1}(u),\ u\in e\in \Hc^{\rd-1}_{s, c},\ Q\subseteq e-u \text{ and } |Q|=s-\ell\}.
\]
Then we define
\[
\X^1_{\ell, s}:=\sum_{(c, e, Q)\in\Ic_{\ell, s}}\I\left[\left(\forall x\in Q,~c\in\Ac_{\rd}(x) \right)\right],
\]
and
\[
\X^2_{\ell, s}:=\sum_{(c, e, Q)\in\Ic_{\ell, s}}\I\left[\left(\forall x\in Q,~c\in\Ac_{\rd}(x) \right)\wedge\left(\exists y\in e,~c\notin \nPa_{\rd}(y)\right)\right].
\]
Observe that $\X_{\ell, s}=\X^1_{\ell, s}-\X^2_{\ell, s}$. 
\begin{claim}\label{claim:rdversl}
Both $\X^1_{\ell, s}$, $\X^2_{\ell, s}$ are 
\[
\text{$\left(2k,~\binom{s-1}{s-\ell}(\cde p_{i-1})^{\ell-1}\log^{2(s-\ell)}\Delta\right)$-observable with respect to $\Omega^*_{4, s,\ell}$,}
\]
where $\Omega^*_{4, s,\ell}$ is defined as in Lemma~\ref{lem:outcome5}.
\end{claim}
\begin{proof}
We only present the proof for $\X^2_{\ell, s}$; a similar (and even simpler) argument applies to $\X^1_{\ell, s}$.
For simplicity of notation, for every $(c, e, Q) \in \Ic_{\ell, s}$, we denote
\[
\I_{c, e,Q}:=\I\left[\left(\forall x\in Q,~c\in\Ac_{\rd}(x) \right)\wedge\left(\exists y\in e,~c\notin \nPa_{\rd}(y)\right)\right],
\]
and note that $\X^2_{\ell, s}=\sum_{(c, e, Q)\in\Ic_{\ell, s}}\I_{c, e,Q}$.

We first check the verifiability of each $\I_{c, e,Q}$.
For any $\omega\in\Omega\setminus \Omega^*_{4, s,\ell}$ with $\I_{c, e,Q}(\omega)=1$, by the definition of $\I_{c, e,Q}$, under the outcome $\omega$, we have $\ac^{\rd}_{x, c}(\omega)=1$ for all $x\in Q$, and there exists a vertex $y\in e$ such that $c\notin\nPa_{\rd}(y)$.
Note that, as shown before in Claim~\ref{claim:rdver2}, $c\notin \nPa_{\rd}(y)$, either because of $\ke^{\rd}_{y, c}(\omega)=0$, or because there exists an edge $e_{y, c}\in \bigcup_{\ell\geq 2}\Hc^{\rd-1}_{\ell, c}$ s.t. $y\in e_{y, c}$ and $ \ac^{\rd}_{v, c}(\omega)=1$ for all $v\in e_{y, c}\setminus \{y\}$. 
We can then define the verifier of $\I_{c, e,Q}$ as
\[
R_{c, e, Q}(\omega):=
\begin{cases}
\left(\bigcup_{x\in Q}\ac^{\rd}_{x, c}\right)\cup\{\ke^{\rd}_{y, c}\},  & \text{if } \ke^{\rd}_{y, c}(\omega)=0; \\
\left(\bigcup_{x\in Q}\ac^{\rd}_{x, c}\right)\cup \left(\bigcup_{v\in e_{y, c}\setminus \{y\}}\ac^{\rd}_{v, c}\right),  & \text{if } \ke^{\rd}_{y, c}(\omega)\notin 0.
\end{cases}
\]
Observe that $|R_{c, e, Q}(\omega)|\le |Q|+k-1\le 2k$.
Then by Definition~\ref{def:verfiable}, $\I_{c, e, Q}$ is $2k$-verifiable with the verifier $R_{c, e, Q}$.

Next, we check the observability. Observe that a random variable $\ke^i_{v,c^*}$ is in the verifier of some $\I_{c, e, Q}$, only if $c=c^*$ and at least $s - \ell$ vertices in $e - u$ have the color $c^*$ activated.
Therefore, for every $\omega\in \Omega \setminus\Omega^*_{4, s,\ell}$ and random variable $\ke^{\rd}_{v,c^*}$,  
we have 
\[
\begin{split}
|\{
(c, e, Q)\in \Ic_{\ell, s}\mid \I_{c,e,Q}(\omega)=1,\ \ke^{\rd}_{v,c^*}\in R_{c, e, Q}(\omega)
\}|&
\leq \left|\left\{e\in \Hc^{\rd-1}_{s, c^*} \mid u\in e,\ \sum_{v\in e-u}\ac^{\rd}_{v,c^*}(\omega)\geq s-\ell\right\}\right|\binom{s-1}{s-\ell}\\
&\leq\binom{s-1}{s-\ell}(\cde p_{i-1})^{\ell-1}\log^{2(s-\ell)}\Delta,
\end{split}
\]
where the last inequality follows from $\omega\notin \Omega^*_{4, s, \ell}$ and the definition of $\Omega^*_{4, s, \ell}$.

Similarly, a random variable $\ac^i_{v,c^*}$ is in the verifier of some $\I_{c, e, Q}$, only if $c=c^*$ and at least $s - \ell$ vertices in $e - u$ have the color $c^*$ activated. Here we did not use all restrictions, but the bound obtained from this limited information is sufficient.
Then for every $\omega\in \Omega \setminus\Omega^*_{4, s,\ell}$ and random variable $\ac^{\rd}_{v,c^*}$,  
\[
|\{
(c, e, Q)\in \Ic_{\ell, s}\mid \I_{c,e,Q}(\omega)=1,\ \ac^{\rd}_{v,c^*}\in R_{c, e, Q}(\omega)
\}|
\leq \binom{s-1}{s-\ell}(\cde p_{i-1})^{\ell-1}\log^{2(s-\ell)}\Delta.
\]
By Definition~\ref{def:obser}, this completes the proof of the claim for $\X^2_{\ell, s}$.
\end{proof}

Observe that
\[
\begin{split}
\E[\X^2_{\ell, s}] \leq \E[\X^1_{\ell, s}]
&= |\Ic_{\ell, s}|\pi_{\rd}^{s-\ell} 
= \sum_{c\in \Pa_{\rd-1}(u)}d^{\rd-1}_{s}(u, c)\binom{s-1}{s-\ell}\pi_{\rd}^{s-\ell}
\leq \binom{s-1}{s-\ell} |\Pa_{\rd-1}(u)|\frac{2t_{\rd-1}}{(\cde\pa_{\rd-1})^{k-s}}\pi_{\rd}^{s-\ell}\\
&=\binom{s-1}{s-\ell} |\Pa_{\rd-1}(u)|\frac{2t_{\rd-1}}{(\cde\pa_{\rd-1})^{k-\ell}}(\cde\pa_{\rd-1}\pi_{\rd})^{s-\ell}
\leq \binom{s-1}{s-\ell} |\Pa_{\rd-1}(u)|\frac{2t_{\rd-1}}{(\cde\pa_{\rd-1})^{k-\ell}},
\end{split}
\]
where the first inequality follows from Proposition~\ref{prop:output}, and the last inequality follows from $\cde\pa_{\rd-1}\pi_{\rd}\le 1$, as given by~\eqref{indas4}.
Recall from~\eqref{def:tau1} that
\[
\tau_{\ell}= \alpha'_{\rd}\beta^{k}|\Pa_{\rd-1}(u)|\left(\frac{t_{\rd-1}}{(\cde\pa_{\rd-1})^{k-\ell}}\right)\Delta^{-\theta},
\]
which is much smaller than the upper bound of expectations.
Applying Theorem~\ref{talagrand} on $\X^1_{\ell, s}$, $\X^2_{\ell, s}$ with $\tau_{\ell}$ and $\Omega^*_{4,s,\ell}$, we obtain that
\[
\begin{split}
&\Prb(|\X^1_{\ell, s} - \E[\X^1_{\ell, s}]|>\tau_{\ell}),\ \Prb(|\X^2_{\ell, s} - \E[\X^2_{\ell, s}]|>\tau_{\ell})\\
\leq\ &4\exp\left(-\frac{\tau_{\ell}^2}{8\cdot 2k\cdot \binom{s-1}{s-\ell}(\cde p_{i-1})^{\ell-1}\log^{2(s-\ell)}\Delta(4\E[\X^1_{\ell, s}] + \tau_{\ell})}\right) + 4\Prb(\Omega^*_{4,s,\ell}) \\
\leq\ & \exp\left(-\Theta\left(\frac{|\Pa_{\rd-1}(u)|t_{\rd-1}\Delta^{-2\theta}}{(\cde\pa_{\rd-1})^{k-1}\log^{2(s-\ell)}\Delta}\right)\right)
+ \exp\left(-\Omega(\log^2\Delta)\right)\\
\leq\ & \exp\left(-\Theta\left(\frac{t_{\rd-1}\Delta^{-2\theta}}{(\cde\pa_{\rd-1})^{k-2}\log^{2(s-\ell)}\Delta}\right)\right)
+ \exp\left(-\Omega(\log^2\Delta)\right)\\
\leq~&\exp\left(-\Omega\left(\frac{\Delta^{1/2(k-1)-1/2k}}{\log^{2(s-\ell)}\Delta}\right)\right)
+ \exp\left(-\Omega(\log^2\Delta)\right)=\exp\left(-\Omega(\log^2\Delta)\right),
\end{split}
\]
where the second inequality uses Lemma~\ref{lem:outcome5}, the third inequality uses $|\Pa_{\rd-1}(u)|\ge (1- o(1))p_{i-1}$ (given by Proposition~\ref{prop:pbou} and the assumption \eqref{indas1}), and the last inequality follows from~\eqref{indas5} along with $\theta=1/4k$.

Finally,  recall that $\X_{\ell,s}=\X_{\ell,s}^1- \X_{\ell,s}^2$. Applying the union bound, we obtain that
\[
 \Prb\left(|\X_{\ell, s} - \E[\X_{\ell, s}]|\leq 2\tau_{\ell}\right) \geq 1- \exp\left(-\Omega(\log^2\Delta)\right).
\]
This completes the proof of Lemma~\ref{lem:con:Xsell}.

\section{Proof of Theorem~\ref{mainthm1}}\label{sec:mainthm1}
Our plan is to apply the codegree reduction algorithm (see Section~\ref{sec:coredu}) to the original hypergraph $\Hc$, reducing codegrees until the conditions of Theorem~\ref{mainthm2} are satisfied for some $\Delta$.

Let $V:=V(\Hc)$.
We start the algorithm with $\Hc^0:=\Hc$ and $\Lambda_{k} := (k2^k)^k\Delta_k$. In the $i$-th iteration round, for every vertex $u$ and $2\leq k-i < \ell \leq k$, let 
\[
F_{k-i, \ell}(u):=\left\{ S\subseteq V:\ |S|=k-i,\ u\in S,~ \text{and}\  \deg_{\ell}(S, \mathcal{H}^{i-1}) \geq (\Lambda_{\ell}/\log\Lambda_{\ell})^{\frac{\ell - (k -i)}{k-1}}\right\}.
\]
Define $\mathcal{H}^i$ as the following:
\[
E(\mathcal{H}^i) := E(\mathcal{H}^{i-1}) - \bigcup_{u}\bigcup_{\ell>k-i}\bigcup_{S\in F_{k-i, \ell}(u)}\{e\in \mathcal{H}^{i-1}:\  e \supseteq S,\ |e|=\ell \} + \bigcup_{u}\bigcup_{\ell>k-i}F_{k-i, \ell}(u).
\]
We then take $\Lambda_{k-i}$ such that
\begin{equation}\label{sec:reduce:def:Lambda}
\Delta_{k-i}(\Hc^{i})=\frac{1}{(k2^k)^{k-i}}\cdot \Lambda_{k-i}^{1 - \frac{i}{k-1}} (\log \Lambda_{k-i})^{\frac{i}{ k-1}},
\end{equation}
and move to the next round.
The algorithm terminates after $k-2$ rounds.

Observe that this process produces a sequence of hypergraphs $\Hc^0, \ldots, \Hc^{k-2}$ that satisfies the following two properties:
\begin{itemize}
\item[(1)] for every $0\leq i \leq k-2$,
\begin{multline*}
\Delta_{k-i}(\Hc^0)=\ldots=\Delta_{k-i}(\Hc^{i-1}) \leq \Delta_{k-i}(\Hc^{i})=\frac{1}{(k2^k)^{k-i}}\cdot \Lambda_{k-i}^{1 - \frac{i}{k-1}} (\log \Lambda_{k-i})^{\frac{i}{ k-1}}\\
\geq \Delta_{k-i}(\Hc^{i+1})\geq \ldots \geq \Delta_{k-i}(\Hc^{k-2});
\end{multline*}
\item[(2)] for every $2\leq k-i < \ell \leq k$,
\[
\delta_{k-i, \ell}(\Hc^{k-2})  \leq \ldots \leq \delta_{k-i, \ell}(\Hc^{i+1}) \leq \delta_{k-i, \ell}(\Hc^{i}) 
\leq (\Lambda_{\ell}/\log\Lambda_{\ell})^{\frac{\ell - (k-i)}{k-1}}.
\]
\end{itemize} 
Moreover, note that $\Hc^{k-2}$ is indeed an $f$-reduction of $\Hc$ with the function $f(s,  \ell):=(\Lambda_{\ell}/\log\Lambda_{\ell})^{\frac{\ell - s}{k-1}}$, which satisfies the assumption of Proposition~\ref{prop:redu}. Therefore, by Proposition~\ref{prop:redu}, we have two more properties:
\begin{itemize}
\item[(3)] any proper coloring of $\Hc^{k-2}$ is also proper for $\Hc$;
\item[(4)] $\Hc^{k-2}$ is triangle-free.
\end{itemize}
Define 
\[
\Delta := \max_{0\leq i \leq k-2} \Lambda_{k-i}.
\]
By Properties (1)(2)\&(4), $\mathcal{H}^{k-2}$ satisfies all the assumptions of Theorem~\ref{mainthm2} with $\Delta$.
Applying Theorem~\ref{mainthm2} to $\mathcal{H}^{k-2}$, we find that there exists a constant $c'$ such that 
\begin{equation}\label{sec:reduce:chibd}
\chi_{\ell}(\mathcal{H}) 
\leq \chi_{\ell}(\mathcal{H}^{k-2})
\leq c' \left( \frac{\Delta}{\log \Delta}\right)^{\frac{1}{k-1}}.
\end{equation}

Take $i$ such that $\Delta = \Lambda_{k-i}$. 
First, by Property (1), we have that for any $\ell$,
\[
\Delta_\ell(\Hc^{i-1}) \le \Delta_\ell(\Hc^{k-\ell})=\frac{1}{(k2^k)^{\ell}}\cdot\Lambda_{\ell}^{1 - \frac{k-\ell}{k-1}} (\log \Lambda_{\ell})^{\frac{k-\ell}{ k-1}}.
\]
On the other hand, by the definition of $F_{k-i, \ell}(u)$, for every vertex $u$ and $\ell > k-i$,
\[
\Delta_\ell(\Hc^{i-1})\geq d_{\ell}(u, \mathcal{H}^{i-1})
\geq \frac{1}{\binom{\ell-1}{k-i-1}} |F_{k-i, \ell}(u)|(\Lambda_{\ell}/\log\Lambda_{\ell})^{\frac{\ell - (k -i)}{k-1}}.
\]
Combining the two inequalities above, we obtain that
\[
|F_{k-i, \ell}(u)|
\leq \frac{\binom{\ell-1}{k-i-1}}{(k2^k)^{\ell}}\Lambda_{\ell}^{1 - \frac{i}{k-1}}(\log \Lambda_{\ell})^{\frac{i}{ k-1}}
\leq \frac{1}{2k(k2^k)^{\ell-1}}\Lambda_{k-i}^{1 - \frac{i}{k-1}}(\log \Lambda_{k-i})^{\frac{i}{ k-1}}
\leq \frac{1}{2k}\Delta_{k-i}(\Hc^i),
\]
where the second inequality uses the maximality of $\Lambda_{k-i}$, and the last inequality follows from~\eqref{sec:reduce:def:Lambda} along with $\ell> k-i$.
Then by the definition of $E(\mathcal{H}^i)$, we have that
\[
\Delta_{k-i}(\Hc^i) \leq \Delta_{k-i}(\Hc) + \max_{u}\sum_{\ell>k-i}|F_{k-i, \ell}(u)| \leq
\Delta_{k-i}(\Hc) +  \Delta_{k-i}(\Hc^i)/2,
\]
and thus $\Delta_{k-i}(\Hc^i) \leq 2\Delta_{k-i}(\Hc)$.
This, together with (\ref{sec:reduce:def:Lambda}) and (\ref{sec:reduce:chibd}), shows that
\[
\chi_{\ell}(\mathcal{H}) 
\leq c' \left( \frac{\Lambda_{k-i}}{\log \Lambda_{k-i}}\right)^{\frac{1}{k-1}}
\leq \frac{c}{2}\left( \frac{\Delta_{k-i}(\Hc^i)}{\log \Delta_{k-i}(\Hc^i)}\right)^{\frac{1}{k-i-1}}
\leq c\left( \frac{\Delta_{k-i}(\Hc)}{\log \Delta_{k-i}(\Hc)}\right)^{\frac{1}{k-i-1}},
\] 
for some sufficiently large constant $c$, thereby completing the proof of Theorem~\ref{mainthm1}.

\section{Proof of Theorem~\ref{sparsethm}}\label{sec:spars}
We say a rank $k$ hypergraph $\Hc$ is \textit{$(\Delta, \omega_2,\ldots, \omega_{k})$-sparse}, if $\Hc$ has
maximum $k$-degree at most $\Delta$, and for all $1 \leq s < \ell \leq k$, $\Hc$ has maximum $(s, \ell)$-codegree $\delta_{s, \ell} \leq \Delta^{\frac{\ell-s}{k-1}}\omega_{\ell}$.

To prove Theorem~\ref{sparsethm}, we use the following partition lemma from~\cite[Lemma 7]{cooper2016coloring}.

\begin{lemma}\label{lem:parti}
Fix $k\geq 2$. Let $\Hc$ be a rank $k$ hypergraph, and $\Fc$ be a finite family of fixed, connected hypergraphs. Let $f=\Delta^{O(1)}$, where $f$ is sufficiently large. Suppose that
\begin{itemize}
\item $\Hc$ is $(\Delta, \omega_2,\ldots, \omega_{k})$-sparse, where $\omega_{\ell}=\omega_{\ell}(\Delta)=f^{o(1)}$ for all $2\leq \ell \leq k$;
\item for all $F\in\Fc$, $\Delta_{F}(\Hc)\leq \Delta^{\frac{v(F)-1}{k-1}}/f^{v(F)}$.
\end{itemize}
Then $V(\Hc)$ can be partitioned into $\bO\left(\Delta^{\frac{1}{k-1}}/f\right)$ parts such that the hypergraph induced by each part is $\Fc$-free and has maximum $\ell$-degree at most $2^{2k}f^{\ell-1}\omega_{\ell}$ for each $2 \leq \ell\leq k$.
\end{lemma}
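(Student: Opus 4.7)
The plan is a random partitioning argument analyzed via the Lov\'asz Local Lemma. Set $r := \lceil c\Delta^{1/(k-1)}/f \rceil$ for a sufficiently large constant $c = c(k)$, and independently assign each vertex of $\Hc$ to one of $r$ classes uniformly at random, producing the partition $V_1, \ldots, V_r$. I will show that with positive probability both stipulated conditions hold simultaneously.

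Two families of bad events must be avoided. For each vertex $u$ and each $2 \le \ell \le k$, let $A_{u, \ell}$ be the event that the $\ell$-degree of $u$ within its own class exceeds $2^{2k} f^{\ell-1}\omega_\ell$. For each $F \in \Fc$ and each copy $\phi$ of $F$ in $\Hc$, let $B_\phi$ be the event that all vertices in the image of $\phi$ land in a single class. Direct counting gives $\Prb(B_\phi) = r^{-(v(F)-1)}$, and by selecting the root of $F$ to be the vertex achieving the minimum in the definition of $\Delta_F$, the hypothesis $\Delta_F(\Hc) \le \Delta^{(v(F)-1)/(k-1)}/f^{v(F)}$ makes the sum of $\Prb(B_{\phi'})$ over copies $\phi'$ meeting any fixed vertex bounded by $O(1/f)$. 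For the degree events, the expected $\ell$-degree of $u$ in its class is at most $\Delta_\ell(\Hc)\cdot r^{-(\ell-1)} \le \omega_\ell f^{\ell-1}/c^{\ell-1}$, comfortably below the threshold $2^{2k}f^{\ell-1}\omega_\ell$.

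The main obstacle is establishing exponential concentration of the $\ell$-degrees. Unlike the graph case ($\ell=2$), when $\ell \ge 3$ the $\ell$-degree of $u$ in its class is a sum of correlated indicator variables $\I[e \setminus \{u\} \subseteq V_{\sigma(u)}]$, one per $\ell$-edge through $u$, and two such edges can share up to $\ell-1$ vertices, so their indicators are not independent. The codegree hypothesis $\delta_{s, \ell} \le \Delta^{(\ell-s)/(k-1)}\omega_\ell$ with $\omega_\ell = f^{o(1)}$ is the crucial lever: it keeps these correlations weak enough that an application of Talagrand's inequality --- with the independent trials being the individual vertex placements --- yields $\Prb(A_{u, \ell}) \le \exp(-\Omega(f^{\ell-1}\omega_\ell))$. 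The Lipschitz/certification hypothesis follows from the observation that reassigning a single vertex changes the $\ell$-degree sum by at most $\delta_{2,\ell} \le \omega_\ell \Delta^{(\ell-2)/(k-1)}$ and that each realized ``bad'' edge is certified by the $\ell$ placements of its vertices, a bounded number of trials. When $\ell \ge 3$ I may instead have to invoke a low-degree polynomial concentration bound (for instance a high-moment argument) to handle the quadratic dependence on the Lipschitz constant, but the sparsity hypothesis ensures the resulting bound remains superpolynomially small in $\Delta$.

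With the probability estimates in hand, the asymmetric Lov\'asz Local Lemma closes the argument. Each bad event is determined by the placements of vertices in a bounded-size neighborhood of its root, so the dependency degree is polynomial in $\Delta$; the superpolynomially small $\Prb(A_{u, \ell})$, combined with the $O(1/f)$ sums of $\Prb(B_{\phi'})$ over dependent copies, satisfy the asymmetric LLL hypotheses for all $f$ sufficiently large (as guaranteed by hypothesis). Any partition avoiding every bad event is $\Fc$-free on each class and has the required degree bound on each class, completing the proof.
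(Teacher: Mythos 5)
The paper does not actually prove Lemma~\ref{lem:parti}; it is imported verbatim from~\cite{cooper2016coloring}, so your proposal has to stand on its own. Your overall strategy (a uniformly random partition into $r=\Theta(\Delta^{1/(k-1)}/f)$ parts, bad events for in-part degrees and for monochromatic copies of members of $\Fc$, then the asymmetric Local Lemma) is the natural first attempt, and your expectation calculations are correct. The gap is in the Local Lemma verification for the copy events $B_\phi$. The events dependent on $B_\phi$ include every $B_{\phi'}$ with $V(\phi')\cap V(\phi)\neq\emptyset$, so you must bound, for each vertex $u$, the number of copies of $F$ containing $u$ \emph{in any role}. But the hypothesis only controls $\Delta_F(\Hc)=\min_{v\in V(F)}\Delta_{F,v}(\Hc)$, i.e.\ the number of copies in which $u$ plays one distinguished role $v^*$; ``selecting the root to be the minimizing vertex'' does not restrict which copies are dependent on $B_\phi$. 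For a role $v\neq v^*$, the number of copies through $u$ at position $v$ is bounded only by the generic embedding count $O\bigl(\Delta^{(v(F)-1)/(k-1)}f^{o(1)}\bigr)$ coming from the degree and codegree hypotheses, and hypergraphs that saturate this while keeping $\Delta_{F,v^*}$ tiny are consistent with all the hypotheses (the rank-$k$ triangles the lemma is applied to are genuinely asymmetric; e.g.\ in $F_5=\{abc,bcd,aed\}$ the vertex $e$ has degree $1$ while the others have degree $2$). For such a vertex, $\sum_{\phi'\ni u}\Prb(B_{\phi'})$ is of order $f^{v(F)-1+o(1)}\gg 1/4$, and the asymmetric LLL condition fails. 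Re-indexing the bad events by the $v^*$-vertex of each copy does not repair this, because the dependency neighbourhood of such an event still sweeps in all copies through any shared vertex. Some additional idea (for instance an iterative extraction of the parts, or an alteration step that relocates the $v^*$-roots of surviving copies) is required; that is precisely the content the citation is carrying.

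A secondary weakness: for $\ell\ge 3$ the in-part $\ell$-degree is a degree-$(\ell-1)$ polynomial of the vertex placements whose Lipschitz constant is $\delta_{2,\ell}\le\Delta^{(\ell-2)/(k-1)}\omega_\ell$, so both classical Talagrand and the Delcourt--Postle variant (Theorem~\ref{talagrand}) give vacuous bounds, as you half-acknowledge. A Kim--Vu-type polynomial concentration inequality does work, but it yields a tail of the form $\exp\bigl(-\Omega(f^{c})\bigr)$ for some $c=c(k)<1$, which beats the $\mathrm{poly}(\Delta)$ dependency degree in the LLL only when $f$ is at least a suitable power of $\log\Delta$. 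Your assertion that the failure probability is ``superpolynomially small in $\Delta$'' is therefore doing real work that needs to be tied explicitly to the ``$f$ sufficiently large'' hypothesis rather than asserted.
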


\begin{proof}[Proof of Theorem~\ref{sparsethm}]
Without loss of generality, we assume that $1\ll f =\Delta^{\bO(1)}$, as otherwise, the conclusion easily follows from Theorem~\ref{mainthm1} or a direction application of the Local Lemma.
Take $\Delta$ such that
\[
\left(\frac{\Delta}{\log f}\right)^{\frac{1}{k-1}}=\max_{2\leq \ell \leq k} \left\{\left(\frac{\Delta_{\ell}}{\log f}\right)^{\frac{1}{\ell-1}}\right\},
\]
and set
\[
f_1:=f/(\log f)^{(k-2)(3k - 4)/(k-1)}.
\]
By the maximality, we have that for all $2\leq \ell\leq k$,
\begin{equation}\label{debound}
\Delta_{\ell} \leq \Delta^{\frac{\ell-1}{k-1}}(\log f)^{1 - \frac{\ell-1}{k-1}},
\end{equation}
and therefore for all $T\in \Tc$,
\begin{equation}\label{tribound}
\begin{split}
\Delta_{T}(\Hc)& \leq \left(\max_{2\leq\ell\leq k}\Delta_{\ell}^{1/(\ell-1)}\right)^{v(T)-1}/f
\leq \left(\max_{2\leq\ell\leq k}\Delta^{1/(k-1)}(\log f)^{\frac{k-\ell}{(k-1)(\ell-1)}}\right)^{v(T)-1}/f\\
& \leq \left(\Delta^{1/(k-1)}(\log f)^{\frac{k-2}{k-1}}\right)^{v(T)-1}/f
\leq \left(\Delta^{1/(k-1)}\right)^{v(T)-1}/f_1,
\end{split}
\end{equation}
where the last inequality follows from the fact that $v(T)\leq 3k-3$.

Now we apply our codegree reduction algorithm (described in Section~\ref{sec:coredu}) to $\Hc$. 
Let $\Hc'$ be a $g$-reduction of $\Hc$, where $g(x, y):=\Delta^{\frac{y-x}{k-1}}$. By Proposition~\ref{prop:redu} and the mechanics of the algorithm, this codegree reduction process produces a sequence of hypergraphs $\{\Hc=\Hc^0, \ldots, \Hc^{i}, \ldots, \Hc^{k-2}=\Hc'\}$, which satisfies the following properties:
\begin{itemize}
\item[(1)] any proper coloring of $\Hc'$ is also proper for $\Hc$;
\item[(2)] for every $0\leq i \leq k-2$,
\begin{equation}\label{itm:de}
\Delta_{k-i}=\Delta_{k-i}(\Hc^0)=\ldots=\Delta_{k-i}(\Hc^{i-1}) \leq \Delta_{k-i}(\Hc^{i})
\geq \Delta_{k-i}(\Hc^{i+1})\geq \ldots \geq \Delta_{k-i}(\Hc^{k-2});
\end{equation}
\item[(3)] for every $2\leq k-i < \ell \leq k$,
\begin{equation}\label{itm:code}
\delta_{k-i, \ell}(\Hc')  \leq  g(k-i, \ell)\leq \Delta^{\frac{\ell - (k-i)}{k-1}}.
\end{equation}
\end{itemize} 
Moreover, we have the following claim.
\begin{claim}\label{claim:De}
For every $0\leq i \leq k-2$, 
\[
\Delta_{k-i}(\Hc^{i}) \leq 2\Delta^{\frac{k-i-1}{k-1}}(\log f)^{1 - \frac{k-i-1}{k-1}}.
\]
\end{claim}
\begin{proof}
We prove it by induction on $i$. The base case $i=0$ holds trivially by~\eqref{debound}.
Let $i \ge 1$ and assume that the claim holds for all $j<i$. This, together with~\eqref{itm:de}, indicates that for all $\ell>k-i$,
\begin{equation}\label{claim:assump}
\Delta_{\ell}(\Hc^{i-1})\leq \Delta_{\ell}(\Hc^{k-\ell}) \leq 2\Delta^{\frac{\ell-1}{k-1}}(\log f)^{1- \frac{\ell-1}{k-1}}.
\end{equation}

Now let us focus on the $i$-th round of the algorithm. By the definition of $F_{k-i, \ell}(u)$ (see Section~\ref{sec:coredu}), we have that for every vertex $u$ and $\ell> k- i$,
\begin{equation*}
\frac{1}{\binom{\ell-1}{k-i-1}} |F_{k-i, \ell}(u)|\Delta^{\frac{\ell - (k -i)}{k-1}}
\leq d_{\ell}(u, \mathcal{H}^{i-1}) \leq \Delta_\ell(\Hc^{i-1}).
\end{equation*}
This, together with~\eqref{claim:assump}, shows that
\[
|F_{k-i, \ell}(u)|
\leq \binom{\ell-1}{k-i-1}\Delta_\ell(\Hc^{i-1})\Delta^{-\frac{\ell - (k -i)}{k-1}}
\leq 2\binom{\ell-1}{k-i-1}\Delta^{\frac{k-i-1}{k-1}}(\log f)^{1 -\frac{\ell-1}{k-1}}.
\]
Then by the definition of $E(\mathcal{H}^i)$, we obtain
\[
\Delta_{k-i}(\Hc^i) \leq \Delta_{k-i} + \max_{u}\sum_{\ell>k-i}|F_{k-i, \ell}(u)| \leq
2\Delta^{\frac{k-i-1}{k-1}}(\log f)^{1 - \frac{k-i-1}{k-1}},
\]
where the last inequality follows from~\eqref{debound} and $f\gg 1$.
\end{proof}

Claim~\ref{claim:De} together with~\eqref{itm:code} shows that
\begin{equation}
\text{$\Hc'$ is $(2\Delta, \omega_2, \ldots, \omega_k)$-sparse, where $\omega_{\ell}=(\log f)^{1 -\frac{\ell-1}{k-1}}$ for each $2\leq \ell\leq k$.}
\end{equation}
After establishing the sparseness, we next estimate the number of triangles in $\Hc'$. 
\begin{claim}\label{claim:tri}
For every $0\leq i \leq k-2$ and $T\in\Tc$, 
\[
\Delta_{T}(\Hc^{i}) \leq \left(i!\cdot k^{i(i+1)/2}\right)^3\left(\Delta^{1/(k-1)}\right)^{v(T)-1}/f_1.
\]
\end{claim}
\begin{proof}
We prove it by induction on $i$. The base case $i=0$ holds trivially by~\eqref{tribound}.
Let $i \ge 1$ and assume that the claim holds for $i-1$, i.e., 
\[
\Delta_{T}(\Hc^{i-1}) \leq \left((i-1)!\cdot k^{(i-1)i/2}\right)^3\left(\Delta^{1/(k-1)}\right)^{v(T)-1}/f_1.
\]
for all $T\in\Tc$.
As all edges in $\Hc^{i} - \Hc^{i-1}$ are of size $k-i$, it is sufficiently to consider all triangles $T$ who contains at least one edge of size $k-i$. 
To simplify the discussion, we further assume that $T$ contains exactly one edge of size $k-i$; other cases will follow by applying the same argument on each size $k-i$ edge.

Let $\Fc$ be the family of copies of $T$ which are in $\Hc^{i}$ but not in $\Hc^{i-1}$. For every $F\in \Fc$, denote by $e_F$ the edge of $F$ which is not in $\Hc^{i-1}$, and note that $|e_{F}|=k-i$. Then by the definition of $H^{i}$, there exists an integer $\ell_F > k-i$, such that there are at least $\Delta^{\frac{\ell_F -(k-i)}{k-1}}$ edges $e'$ of size $\ell_F$ in $\Hc^{i-1}$ with $e'\supseteq e_F$. Moreover, every such $\{e', f, g\}$ forms a copy of triangle $T_{\ell_F}$ in $\Hc^{i-1}$, where $T_{\ell_F}$ is the triangle obtained by replacing the 
size $k-i$ edge $e$ in $T$ with a size $\ell_F$ edge containing $e$ and vertices outside of $T$.

For every $k-i < \ell \le k$, define
\[
\Fc_{\ell}:=\{F\in \Fc:\ \ell_F=\ell\}.
\]
Then from the above discussion, we have
\[
\frac{1}{\binom{\ell}{k-i}}|\Fc_{\ell}| \cdot \Delta^{\frac{v(T_{\ell}) - v(T)}{k-1}}
=\frac{1}{\binom{\ell}{k-i}}|\Fc_{\ell}| \cdot \Delta^{\frac{\ell -(k-i)}{k-1}}\le \Delta_{T_{\ell}}(\Hc^{i-1})\leq \left((i-1)!\cdot k^{(i-1)i/2}\right)^3\left(\Delta^{1/(k-1)}\right)^{v(T_{\ell})-1}/f_1,
\]
and therefore,
\[
|\Fc_{\ell}|\leq \binom{\ell}{k-i}\left((i-1)!\cdot k^{(i-1)i/2}\right)^3\left(\Delta^{1/(k-1)}\right)^{v(T)-1}/f_1.
\]
Finally, we conclude that
\[
\begin{split}
\Delta_{T}(\Hc^{i}) 
&\leq \Delta_{T}(\Hc^{i-1}) + \sum_{k-i<\ell \le k}|\Fc_{\ell}| 
\leq \left(1 + \sum_{k-i<\ell \le k}\binom{\ell}{k-i}\right)\left((i-1)!\cdot k^{(i-1)i/2}\right)^3\left(\Delta^{1/(k-1)}\right)^{v(T)-1}/f_1\\
&
\leq ik^i\left((i-1)!\cdot k^{(i-1)i/2}\right)^3\left(\Delta^{1/(k-1)}\right)^{v(T)-1}/f_1
\le \left(i!\cdot k^{i(i+1)/2}\right)^3\left(\Delta^{1/(k-1)}\right)^{v(T)-1}/f_1,
\end{split}
\]
which completes the proof. We note that while the exponent 3 may seem excessive here, it is necessary because, for other $T$, we may need to apply the same calculation three times to each of its edges.
\end{proof}

Set $f':=\left(f_1/\left(k!\cdot k^{k^2/2}\right)^3\right)^{1/(3k-3)}$, and note that $\log f' = \Theta(\log f)$. Thus, Claim~\ref{claim:tri} implies that
\[
\Delta_{T}(\Hc') \leq \left(\Delta^{1/(k-1)}\right)^{v(T)-1}/(f')^{v(T)}.
\]
for all $T\in\Tc$.
Applying Lemma~\ref{lem:parti} on $\Hc'$ with $f'$, we obtain a partition of $V(\Hc)$ into $\bO\left(\Delta^{1/(k-1)}/f'\right)$ parts such that the hypergraph induced by each part is triangle-free and has maximum $\ell$-degree at most
at most $2^{2k}(f')^{\ell-1}\omega_{\ell}$ for every $2 \leq \ell\leq k$, where $\omega_{\ell}=(\log f)^{1 -\frac{\ell-1}{k-1}}.$
By Theorem~\ref{mainthm1}, we can properly color each part with lists of 
\[
\bO\left(
\max_{2\leq \ell \leq k} \left\{\left(  \frac{(f')^{\ell-1}\omega_{\ell}}{\log \left((f')^{\ell-1}\omega_{\ell}\right)}\right)^{\frac{1}{\ell-1}} \right\}
\right) \leq 
\bO\left( f'\max_{2\leq \ell \leq k}\left\{
\left(\frac{\omega_{\ell}}{\log f}\right)^{\frac{1}{\ell-1}}
\right\}\right)
= \bO\left(f'
\left(\log f\right)^{-\frac{1}{k-1}}
\right)
\]
colors.
Finally, we conclude that
\[
\chi(\Hc)\leq \chi(\Hc') \leq \bO\left(\Delta^{1/(k-1)}/f'\right)\cdot \bO\left(f'
\left(\log f\right)^{-\frac{1}{k-1}}
\right)
= \bO\left(
\max_{2\leq \ell \leq k} \left\{\left(\frac{\Delta_{\ell}}{\log f}\right)^{\frac{1}{\ell-1}}\right\}
\right),
\]
where the last equality follows from the definition of $\Delta$. 
\end{proof}

\section{Open problems}\label{sec:conclu}
In this paper, we showed that by forbidding all triangles, one can improve the trivial bound of the chromatic number and therefore independence number of hypergraphs by some polylogarithmic factor.
We remark that answering negatively a question of Ajtai, Erd\H{o}s, Koml{\'o}s and Szemer{\'e}di~\cite{ajtai1981turan}, Cooper and Mubayi~\cite{cooper2017sparse} constructed a 3-uniform, $K^-_4$-free hypergraphs with independence number at most $2n/\sqrt{\Delta}$, and thereby showed that forbidding some single triangle is not enough to improve the trivial independence number from the Tur\'an theorem, and thereofore the trivial chromatic number from the Local Lemma.
It would be interesting to determine whether our results (Theorems~\ref{mainthm1} and~\ref{mainthmind}) can be extended to a larger class of $\Tc'$-free hypergraphs, for some smaller forbidden set $\Tc'\subsetneq \Tc$ (where $\Tc$ is the collection of rank $k$ triangles for some given rank $k$).

A related but more difficult problem than that considered in this paper is to obtain analogous results for \textit{hypergraph DP-colorings}. 
The concept of \textit{DP-coloring}, or so called \textit{correspondence colorings} was developed by Dvo{\v{r}}{\'a}k and Postle~\cite{dvovrak2018correspondence} in order to generalize the notion of list coloring on graphs. This concept  was later generalized to hypegraphs due to the work of Bernshteyn and Kostochka~\cite{bernshteyn2019dp}.
For a detailed definition of hypergraph DP-colorings, we refer interested readers to \cite{bernshteyn2019dp}.
Unfortunately, our approach in this paper does not readily generalize to DP-colorings, and we believe new ideas are needed.
Intuitively speaking, when our approach moves to DP-colorings, the major new challenge is that the `hyperedge shrinking' trick we employed all the time is not applicable; indeed, applying such `shrinkage' might generate a set of forbidden correspondence on edges, which is no longer a hypergraph matching, and thus no longer forms an instantce of DP-coloring.
Moreover, unlike list colorings where the random events always keep independence among different colors, there is no guarantee of such independence in DP-colorings, which certainly creates more technical difficulties in concentration analysis.

\section*{Acknowledgments}
We are grateful to the anonymous referees for pointing out an error in an earlier version and for providing helpful comments and suggestions.

\bibliographystyle{abbrv}
\bibliography{ref} 

\end{document}